\providecommand{\pgfsyspdfmark}[3]{}
   \let\NAT@parse\undefined
 \newtheorem{thm}{Theorem}
 \newtheorem{exe}{Example}
 \newtheorem{rem}{Remark}
 \newtheorem{lem}{Lemma}
 \newtheorem{prop}{Proposition}
 \newtheorem{defin}{Definition}
 \newtheorem{ass}{Assumption}
 \newenvironment{theorem}[1][\,\!]{\ignorespaces\begin{thm}[#1]\rm }{\hfill \mbox{\footnotesize$\square$} \end{thm}}
 \newenvironment{example}{\ignorespaces\begin{exe}\rm }{\hfill \mbox{\footnotesize$\square$} \end{exe}}
 \newenvironment{proposition}{\begin{prop}}{\hfill \mbox{\footnotesize$\square$} \end{prop}}
 \newenvironment{remark}{\begin{rem}}{\hfill $\bullet$ \end{rem}}
 \newenvironment{lemma}{\begin{lem}}{\hfill  \mbox{\footnotesize$\square$} \end{lem}}
 \newenvironment{definition}{\vskip 3pt\begin{defin}}{\vskip 3pt
                                          \end{defin}}
 \newenvironment{assumption}{\vskip 3pt\begin{ass}}{\end{ass}\vskip 3pt}
 \newenvironment{proof}{\it Proof. }{\hfill \mbox{\footnotesize$\blacksquare$}}
\def\appbkgd{I} 
\def\Lemspecres{Lemma \ref{Lemspecres}\color{black}}
\def\SectionAH{Section \ref{A-H}\color{black}}
\def\lemAE{Lemma \ref{lemAE} in Appendix \appbkgd\color{black}}
\def\lemstabmani{Lemma \ref{lemstabmani}\color{black}}
\def\thmtykho{ Lemma \ref{thmtykho}\color{black}}
\def\RemThyk{ Remark \ref{RemThyk}\color{black}}
\def\lemregpert{Lemma \ref{lemregpert}\color{black}}
\def\lemcontin{Lemma \ref{lemcontin}\color{black}}
\def\lemanal{Lemma \ref{lemanal}\color{black}}
\newcommand{\sfrac}[2]{\mbox{\small $\displaystyle\frac{#1}{#2}$}}
\title{{\bf Singular-Perturbations-Based Analysis of Dynamic Consensus in Directed Networks of Heterogeneous Nonlinear Systems }}
\author{Mohamed Maghenem \quad Elena Panteley \quad Antonio Lor\'{\i}a 
  \thanks{The authors are with CNRS, France.  M. Maghenem is with University of Grenoble Alpes,  CNRS,  Grenoble INP,  GIPSA-Lab,  France. 
        E-mail: mohamed.maghenem@cnrs.fr; 
        E. Panteley and A. Lor\'ia are with L2S, CNRS, 91192 Gif-sur-Yvette, France. 
        E-mail: elena.panteley@cnrs.fr and antonio.loria@cnrs.fr 
    %
        This work was supported by the French ANR via project HANDY, contract number 
        ANR-18-CE40-0010.
        }}
\begin{document}
\maketitle 

\begin{abstract}
We analyze networked heterogeneous nonlinear systems, with diffusive coupling and interconnected over a generic static directed graph.  Due to the network's hetereogeneity,  complete synchronization is impossible, in general, but  an emergent dynamics arises. This may be characterized by two dynamical systems evolving in two time-scales. The first, ``slow'',  corresponds to the dynamics of the network on the synchronization manifold.  The second, ``fast'', corresponds to that of the synchronization errors.   We present a framework to analyse the emergent dynamics based on the behavior of the slow dynamics.  Firstly,  we give conditions under which  if the slow dynamics admits a globally asymptotically stable equilibrium,  so does the networked systems.  Secondly,  we give conditions under which,  if the slow dynamics admits an asymptotically stable orbit and a single unstable equilibrium point, there exists a unique periodic orbit that is 
almost-globally asymptotically stable.  The emergent behavior is thus clear,  the systems asymptotically synchronize in frequency and,  in the limit,  as the coupling strength grows,  the emergent dynamics approaches that of the slow system. Our analysis is established using singular-perturbations theory.  In that regard,  we contribute with original statements on stability of disconnected invariant sets and limit cycles.
 \\[3pt]
 Keywords---Consensus, multi-agent systems, singular perturbations, network systems, synchronization. \\[3pt]
Mathematics Subject Classification---34D15, 34D05, 34C25, 93C10, 34C28
\end{abstract}

\section{Introduction and motivation}

Networks of nonlinear heterogeneous systems are both, ubiquitous and {\it complex}.  Their ubiquity motivates their study across  numerous research disciplines,  as varied as Engineering Systems theory \cite{chow1982}, Complexity theory \cite{heylighen1989self} or,  even,  Philosophy of Science \cite{Silberstein2002-SILREA}.  Their complexity is motor for two apparently antagonistic trains of thought that attempt to explain the collective behavior of networked systems in a broad sense: {\it reductionism} and {\it emergentism}. The first asserts that any whole can be reduced to its constituent parts---as in the case of networked linear systems \cite{WEIBOOK},  while the tenet of emergentism is that a new behavior appears as a consequence of the interaction of the said parts 
\cite{Silberstein2002-SILREA}---as in networks of heterogeneous nonlinear systems.  What is more,  one of the accepted definitions of Complexity is that it corresponds to the difference between the network as a whole and the sum of its parts and,  in that regard, nonlinearity is a necessary condition for complexity to appear \cite{Arecchi1997}.  

In this paper,  we show that, to some extent, both emergentism and reductionism are not necessarily mutually exclusive,  but their respective underlying postulates are useful to assess the behavior of networks of heterogeneous nonlinear systems. We focus on systems with dynamics given by 
\begin{equation} \label{dxifi}
   \dot{x}_i = f_i(x_i) + u_i, \quad i \in \{1,2,\ldots,N\}, \ x_i \in \mathbb{R}^n,
\end{equation}
where $i \in \{1,2,\ldots,N\}$, $x_i \in \mathbb{R}^n$ is the state of the $i$th system and  $u_i \in \mathbb{R}^n$ is the decentralized control input to each system, defined as the consensus control law 
\begin{equation} \label{ui}
u_i := -\sigma \Big[ l_{i1}(x_i-x_1) + \ldots + l_{iN} (x_i-x_N) \Big],
\end{equation}  
where $l_{ij}$ are different non-negative real numbers denoting the individual interconnection weights and the scalar parameter 
$\sigma>0 $ is  the common coupling strength. 

 The control law \eqref{ui} is reminiscent of that commonly used in the literature on consensus control,  in which,  the coupling strength $\sigma = 1$.  This is specifically the case for networks of linear systems, in which case {\it complexity} hardly appears and the focus turns towards relaxing the various conditions pertaining to the nature of the interconnections. These may be linear \cite{WEIBOOK}; nonlinear \cite{arcak_TAC2007_passive-networks_4287131,isidori-marconi2014_6819823}; time-varying \cite{MoreauCDC04,chowdhury2016persistence}; switching \cite{olfati2004consensus,Morarescu-switching_9105084} or even state dependent \cite{al:SCLCHANNELS,casadei2019_8713390}. Other topology aspects,  such as whether the graph is directed \cite{martin2016time} or the interconnections are signed \cite{altafini2012_6329411},  may also alter consensus.  
 For networks of heterogeneous nonlinear systems,  however, the coupling strength plays a central role.  Different kinds of emergent behavior may arise depending on whether $\sigma$ is ``weak'' \cite{pogromsky2011_6160437,PANTELEY_SCHOLL} or ``strong'' \cite{DYNCON-TAC16,shim_AUT2020}.

Our main interest in this paper is to assess the behavior of the corresponding closed-loop system, specifically, in the case that the coupling gain $\sigma$ is larger than a certain threshold,  but we restrict our analysis to networks with an underlying static directed graph.  Akin to \cite{HDR-LENA} and \cite{DYNCON-TAC16},  we analyze the closed-loop networked system via a change of coordinates---introduced in \cite{CP3-CDC16-Mohamed}---that exhibits an intrinsic dichotomous structure composed of two dynamics defined in orthogonal spaces. On one hand, one has a 
{\it reduced-order dynamics} with state $x_m\in \mathbb{R}^n$ (miscalled {\it emergent dynamics} in \cite{DYNCON-TAC16}) and, on the other,  the dynamics of synchronization errors, denoted by $e_i := x_i - x_m$.  This characterization of the networked system is driven by the objective of characterizing synchronization phenomena that may appear (or not) in view of the systems' interconnections. 

In \cite{DYNCON-TAC16},  we say that the systems reach {\it dynamic consensus} if, for all $i\leq N$,  the synchronization errors $e_i$ converge to zero asymptotically.  The dynamic consensus paradigm generalizes the more common {\it equilibrium} consensus, in which case,  the reduced-order dynamics is null,  {\it i.e.} $\dot x_m = 0$,  because the collective behavior is {\it static} and the state of the reduced-order dynamics satisfies $x_m(t) \equiv x_m(0)$,  where $x_m(0)$ is a weighted average of the nodes' states' initial values. In the case of a network of oscillators, the reduced-order dynamics may admit an asymptotically stable equilibrium (an example is provided in \cite{DYNCON-TAC16}) or an asymptotically stable attractor \cite{JP2-STUART-LANDAU}.  However, in general, asymptotic dynamic consensus is unreachable due to the heterogeneity \cite{shim_AUT2020}.  An exception is that of systems that admit an internal model \cite{wieland-sep-AUT-2011,depersis2014_TCNS_internal-model,kim2011_05605658}. Otherwise, for general nonlinear heterogeneous systems, dynamic consensus may be guaranteed only in a \mbox{\it practical} sense \cite{DYNCON-TAC16}.

In this paper,  we establish two statements, each addressing one of two possible cases: one in which the (slow) reduced-order dynamics  admits a globally asymptotically stable equilibrium and another one,  in which,  it admits a periodic solution.   
In the first case,  we give sufficient conditions,  under which,  the origin for the networked system is globally asymptotically stable.  
In the second case,  we prove that the synchronization errors converge to a unique attractive periodic orbit,  so the systems synchronize in frequency.  Moreover,  for ``large'' values of the coupling strength $\sigma$,  this orbit is ``close''  to that generated by periodic solutions of the reduced dynamics. 
Thus,  the emergent dynamics approaches that of the  reduced-order system,  as the coupling gain grows. 

The analysis  is based on the recognized premise that in  
self-organized complex systems,  emergence is {\it multi}-level,  as  occurring in multiple timescales \cite{heylighen1989self,haken77synergetics}.  Here, we only consider two, one that is {\it slow} and pertains to the reduced-order system and another that is {\it fast} and pertains to the synchronization errors. We rely on singular perturbation theory,  on statements found in \cite{anosov1960limit,kokotovic1976singular,kokotovic1999singular,KHALIL96}, but also on original refinements of some statements from \cite{anosov1960limit} for systems admitting disconnected sets composed of equilibria and periodic orbits. 

The model-reduction-and-multi-time-scale perspective is certainly not new,  neither in systems theory \cite{chow1982} nor in other disciplines.  In the seminal work \cite{chow_kokotovic1985}, which follows up on \cite{chow1982}, the authors consider a modular network composed of sparsely connected clusters of densely interconnected dynamical systems modeled by simple integrators---the paradigm is motivated by that of large electrical networks.  Using classical singular-perturbation theory \cite{anosov1960limit,kokotovic1976singular},  it is showed that such networks achieve synchronization at two levels,  within and among the clusters. The analysis is based on relating the network's sparsity to a singular-perturbation parameter.  These concepts have been revisited in many succeeding works,  such as \cite{biyik2008area} and \cite{martin2016time}.  In the former, for networks of simple integrators through sector nonlinearities, and in the latter, for linear homogeneous systems interconnected through time-varying persistently-exciting gains {\it a la} \cite{MoreauCDC04}.  On the other hand, in   \cite{tognetti2021synchronization,rejeb2018control},  networks of linear homogeneous singularly-perturbed systems are considered. Thus, in all of the above,  the setting is fundamentally different from the one adopted here.

In \cite{CP3-CDC16-Mohamed}, for a particular case-study of networked Andronov-Hopf oscillators,  we use a coordinate transformation to exhibit the presence of the two-timescale emergent dynamics and singular-perturbation theory to analyze the collective behavior under the premise that the reduced-order system admits an asymptotically stable orbit.  Based on the coordinate transformation introduced in \cite{CP3-CDC16-Mohamed},  singular-perturbation theory is used in \cite{shim_AUT2020} on a wider class of nonlinear systems with rank-deficient coupling to establish synchronization in the practical sense.  Thus, there are several articles in the literature that explicitly use reduction and singular-perturbation theory,  even in a 
multi-agent context.  Yet,  we are not aware of any such work whose scope covers generic nonlinear heterogeneous systems interconnected over directed graphs and characterize the collective behavior with higher precision.  Conceptually,  in phase with the emergentism posit, we exhibit the emergence of a {\it complex} (in the sense of \cite{Arecchi1997}) dynamic behavior,  as a result of the systems' interactions. At the same time, we give a more precise characterization (well beyond practical asymptotic stability of the synchronization manifold) of the collective behavior of networked systems based on that of a reduced-order model. 

The rest of the paper is organized as follows.  In Section \ref{sec:model},  we exhibit the network's reduced-order and synchronization dynamics,  under an invertible  coordinate transformation.  In Sections \ref{sec:case1} and \ref{sec:case2},  we present our main results for the two cases described above, respectively. In Section \ref{sec:casestudy}, we revisit the case-study of Stuart-Landau oscillators,  and we conclude with some remarks in Section \ref{sec:concl}. The paper is completed with technical appendices. \color{black} 
 
 \textbf{Notation.}   
Given a nonempty set $K \subset \mathbb{R}^{n}$,  $|x|_K := \inf_{y \in K} |x-y|$, where  $|s|$ denotes the Euclidean norm of $s$, defines the distance between $x$ and the set $K$.   For a nonempty set $O \subset \mathbb{R}^{n}$, $K \backslash O$ denotes the subset of elements of $K$ that are not in $O$.    For a matrix $A \in \mathbb{R}^{n \times n}$,  $A^{-1}$ denotes its inverse, $A^\top$ denotes its transpose, and $|A|$ denotes its norm.  For a matrix $\Gamma \in \mathbb{R}^{n \times n}$,  $\lambda_{min}(\Gamma)$ and $\lambda_{max}(\Gamma)$ denote the smallest and the largest eigenvalues of $\Gamma$,  respectively.   By  $1_N  \in \mathbb{R}^{N}$,  we denote the vector whose entries are equal to $1$.   For a sequence  $\{ A_i \}^{N}_{i =1} \subset \Pi^{N}_{i=1} \mathbb{R}^{n_i \times n_i}$,  $\underset{i \in \{1,2,...,N \}}{\text{blkdiag}} \hspace{-0.2cm} \{A_i\}$ is the block-diagonal matrix whose $i$-th diagonal block corresponds to the matrix $A_i$.  By $\otimes$,  we denote the Kronecker product.  For a complex number $\lambda \in \mathcal{C}$, $\Re{e}(\lambda)$ denotes the real part of $\lambda$ and $\Im(\lambda)$ denotes the imaginary part of $\lambda$.  

\section{On strongly-coupled connected networks } \label{sec:model}

\subsection{The model and standing assumptions}

Consider a group of $N$ nonlinear systems as in \eqref{dxifi} driven by the distributed control inputs in \eqref{ui}, where each $l_{ij}\geq 0$ is constant but not necessarily equal to  $\l_{ji}$.   
In particular,  when $\l_{ij}$ is strictly positive,  then there exists an interconnection from the $j$th node to the $i$th node, but $\l_{ji}$ may be null, in which case, the interconnection is unidirectional.  More precisely, we pose the following hypothesis.

\begin{assumption}[connected di-graph] \label{ass3} 
The network's graph is connected. 
\end{assumption}

\begin{remark}\label{rmk1}
We stress that under Assumption \ref{ass3}, any kind of directed graph containing a rooted spanning-tree (with or without cycles) is considered.  Moreover, under Assumption \ref{ass3} the Laplacian $L$ has exactly one eigenvalue (say, $\lambda_1$) that equals  zero, while the others have positive real part, {\it i.e.}, 
$ 0 = \lambda_1 < \Re{e} \left\{ \lambda_2 \right\} \leq \ldots \leq \Re{e}\left\{ \lambda_n \right\}$.  Furthermore, the right eigenvector corresponding to the simple eigenvalue $\lambda_1 =0$, is $v_r= 1_N  \in \mathbb{R}^N$, while the left eigenvector, denoted $v_l$, contains only non-negative elements \cite{olfati2004consensus} and satisfies ${1}^\top_N v_l=1$.  
\end{remark}

In addition,  for each unit,  we impose the following regularity and structural hypotheses: 

\begin{assumption}[equilibrium] \label{ass1} 
The functions  $f_i$ are continuously differentiable and, without loss of generality,  we assume that $f_i(0) = 0$ for all $i \in \{1,2,\ldots,N\}$.
\end{assumption}

\begin{assumption}[Semi passive units] \label{ass2}
Each agent is input-to-state strictly semi-passive; 
namely,   for each $i\in\{1,2,\ldots,N\}$, there exists a continuously differentiable and radially unbounded storage function $V_i: \mathbb{R}^n \rightarrow \mathbb{R}_{\geq 0}$,  a positive constant $\rho$,  a continuous function $H_i : \mathbb{R}^n \rightarrow \mathbb{R}$,  and a continuous function $\psi_i : \mathbb{R}^n \rightarrow \mathbb{R}_{\geq 0}$ such that
\begin{align*}
\dot{V}_i(x_i) \leq x^{T}_i u_i - H_i(x_i) 
\end{align*}  
and $ H_i(x) \geq \psi_i(\left| x \right|) $ for all 
$ \left| x \right| \geq \rho $.
\end{assumption}

Assumption \ref{ass2} is useful to assess the boundedness of solutions for system \eqref{dxifi} in closed loop with \eqref{ui} for a sufficiently large coupling strength $\sigma$.   More precisely,  we have the following result.   

\begin{lemma} [Global ultimate boundedness \cite{DYNCON-TAC16}]  \label{lem0}  
Consider the systems in \eqref{dxifi} in closed loop with the control inputs in \eqref{ui} and let Assumptions \ref{ass3}--\ref{ass2} be satisfied.  Then,  the closed-loop system is (uniformly in $\sigma$) ultimately bounded; namely,   there exist $\sigma^\ast>0$ and  $r>0$ such that, for any $R \geq 0$, there exists $\tau_{R} \geq 0$  such that, for each solution $x(t)$ starting from $x_o \in \mathbb{R}^{nN}$, we have  
\begin{align*} 
\left|{x}_o\right| \leq R \Longrightarrow \left|{ x}(t)\right| \leq r  \qquad \forall t \geq \tau_{R},\quad \forall \sigma \geq  \sigma^\ast,
\end{align*} 
where ${x} \in \mathbb{R}^{nN}$ denotes the network's state,  i.e., $x = [x_1,\ldots, x_N]^\top$.
\end{lemma}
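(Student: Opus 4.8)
The plan is to build an ultimate bound for the whole network by processing it one strongly connected component (SCC) at a time, following the topological order of the condensation of the digraph (the acyclic graph of its SCCs). The naive attempt, namely $V=\sum_i V_i$, does not work directly: summing the dissipation inequalities of Assumption \ref{ass2} produces the coupling term $\sum_i x_i^\top u_i = -\sigma\,x^\top(L\otimes I_n)x$, and for a \emph{directed} graph the symmetric part $\tfrac12(L+L^\top)$ need not be positive semidefinite, so this term is not sign-definite and may even grow with $\sigma$. (If the graph were strongly connected, the weighted sum $V=\sum_i v_{l,i}V_i$ would fix this, since $\mathrm{diag}(v_l)L+L^\top\mathrm{diag}(v_l)\succeq 0$; but under Assumption \ref{ass3} some entries of $v_l$ vanish, so this only controls a root cluster.) Along the way, radial unboundedness of the $V_i$ together with the estimates below precludes finite escape, so all solutions are complete and it suffices to produce, for $\sigma$ large, a ball of $\sigma$-independent radius that every solution enters after a time $\tau_R$ depending only on $|x_o|\le R$.

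\emph{Root clusters.} Let $\mathcal C$ be an SCC receiving no edges from outside (one exists by Assumption \ref{ass3}). The subnetwork it spans is closed and strongly connected, with Laplacian $L^{\mathcal C}$ admitting a strictly positive left null vector $\xi\gg 0$; then $Q:=\mathrm{diag}(\xi)L^{\mathcal C}+(L^{\mathcal C})^\top\mathrm{diag}(\xi)$ has zero row and column sums and nonpositive off-diagonal entries, hence is the Laplacian of a connected weighted undirected graph, so $Q\succeq 0$ with $\ker Q=\mathrm{span}\{\mathbf 1\}$. Taking $W:=\sum_{i\in\mathcal C}\xi_i V_i(x_i)$ and using Assumption \ref{ass2}, $\dot W \le -\tfrac{\sigma}{2}\,(x^{\mathcal C})^\top(Q\otimes I_n)x^{\mathcal C} - \sum_{i\in\mathcal C}\xi_i H_i(x_i)$, where $x^{\mathcal C}$ denotes the state of $\mathcal C$. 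Outside a compact set, either the disagreement component of $x^{\mathcal C}$ is bounded away from $0$, in which case the first, $\sigma$-proportional, term dominates the bounded remainder once $\sigma$ is large enough, or $x^{\mathcal C}$ is close to the consensus direction and large, in which case every $|x_i|$ is large and strict semi-passivity makes $-\sum_{i\in\mathcal C}\xi_i H_i(x_i)$ negative; in both cases $\dot W<0$ off a compact set whose size is independent of $\sigma$, so $W$ being radially unbounded yields uniform (in $\sigma$) ultimate boundedness of $x^{\mathcal C}$.

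\emph{Induction down the condensation.} Order the SCCs $\mathcal C_1,\dots,\mathcal C_p$ consistently with the partial order, roots first, and suppose all SCCs upstream of $\mathcal C_k$ are already known to be ultimately bounded uniformly in $\sigma$; let $w_k$ be their aggregate state, eventually bounded by a $\sigma$-independent constant. The state $x^k$ of $\mathcal C_k$ obeys $\dot x^k=F^k(x^k)-\sigma(\widehat L^k\otimes I_n)x^k+\sigma(M^k\otimes I_n)w_k$, where $\widehat L^k=L^k+D^k$ is the Laplacian of $\mathcal C_k$ ``grounded'' by the diagonal $D^k\ge 0$ of its external in-weights (with $D^k\ne 0$, since $\mathcal C_k$ is not a root) and $M^k$ is the external adjacency. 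With $\xi\gg 0$ the left null vector of $L^k$, the matrix $\mathrm{diag}(\xi)\widehat L^k+(\widehat L^k)^\top\mathrm{diag}(\xi)=Q+2\,\mathrm{diag}(\xi)D^k$ is now positive \emph{definite} (the PSD kernels $\mathrm{span}\{\mathbf 1\}$ and $\{x:D^kx=0\}$ meet only at $0$), say $\succeq\gamma_k I$. Taking $W_k:=\sum_{i\in\mathcal C_k}\xi_i V_i(x_i)$, for $t$ past the upstream dwell time one gets $\dot W_k \le -\sigma\gamma_k|x^k|^2+\sigma\kappa_k|w_k|\,|x^k|-\sum_{i\in\mathcal C_k}\xi_i H_i(x_i)$, whose right-hand side is negative once $|x^k|$ exceeds a bound independent of $\sigma$ (the quadratic term beats the linear one). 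Iterating over $k$ covers every node; taking $r$ the largest of the cluster radii, $\tau_R$ the sum of the cluster dwell times, and $\sigma^\ast$ large enough for all the above estimates, the statement follows.

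The main obstacle is the first point: since the directed coupling power is not sign-definite, there is no single energy function for the whole network, which forces the cascade-over-SCCs construction; after that, each step is a routine ``quadratic-beats-linear'' (or strict-semi-passivity) ultimate-boundedness estimate, the delicate part being the bookkeeping that keeps the radius $r$ and the threshold $\sigma^\ast$ from depending on the initial condition and keeps the final bound independent of $\sigma$.
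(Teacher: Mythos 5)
Your proposal is essentially correct, but note that this paper never proves Lemma \ref{lem0} itself: the statement is imported from \cite{DYNCON-TAC16}, so the honest comparison is with that reference rather than with a proof in this text. What you do differently is to make the directed-graph obstruction explicit (the symmetric part of $L$ need not be positive semidefinite, and $v_l$ is supported only on the root component, so neither $\sum_i V_i$ nor $\sum_i v_{li}V_i$ works globally) and to resolve it by a cascade over the condensation into strongly connected components, using two standard facts: for a strongly connected component, $\mathrm{diag}(\xi)L^{\mathcal C}+(L^{\mathcal C})^\top\mathrm{diag}(\xi)$ is the Laplacian of a connected undirected graph, and the corresponding ``grounded'' matrix downstream is positive definite. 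This buys a self-contained proof and a transparent reason why $r$ and $\sigma^\ast$ are uniform in $\sigma$ (the gain multiplies both the quadratic dissipation and the linear cross term, so it cancels in the comparison), at the price of the bookkeeping you acknowledge. To make the sketch airtight you should record: (i) $-\sum_i\xi_iH_i(x_i)$ is bounded above by a state- and $\sigma$-independent constant because $H_i\geq 0$ outside $B_\rho$ and is continuous inside, which is what ``bounded remainder'' silently uses; (ii) the root-cluster dichotomy needs $\psi_i$ strictly positive for $|x|\geq\rho$ (the intended meaning of ``strictly'' in Assumption \ref{ass2}), and a uniform $\tau_R$ needs $\dot W$ bounded away from zero on the relevant compact annuli, which follows by continuity; (iii) $D^k\neq 0$ for every non-root component because Assumption \ref{ass3} forces the condensation to have a unique source; and (iv) each cluster must also be bounded during the upstream transient by an $R$-dependent, $\sigma$-independent constant, which your estimates do deliver since $\sigma$ factors out of the quadratic-versus-linear comparison. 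These are routine completions; the architecture of the argument is correct.
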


Under the assumptions listed above, we investigate the problem of assessing the behavior of the networked closed-loop system \eqref{dxifi}-\eqref{ui}. To this end,  as it is customary, let us collect the individual interconnection coefficients $l_{ij}$ into the Laplacian matrix ${ L}:=[\ell_{ij}] \in{\mathbb{R}}^{N\times N}$, where 
\begin{equation*}
{\ell _{ij}} = \left\{ {\begin{array}{*{20}{c}}
{\sum\limits_{k \in  \mathcal N_i} {{a_{ik}}} }&{i = j}\\
{ - {a_{ij}}}&{i \ne j}.
\end{array}} \right.
\end{equation*}
Then, replacing  \eqref{ui} in  \eqref{dxifi},  we see that the overall network dynamics takes the form 
\begin{equation}
\label{ntwksyst}
\dot{x} =  F({x}) - \sigma [L \otimes I_n] {x}, 
\end{equation}
where the function $F: \mathbb{R}^{nN} \to \mathbb{R}^{nN}$ is given by 
\begin{eqnarray*}
F({x})&:=&\left[ f_1(x_1)\ \cdots\ f_N(x_N) \right]^\top.
\end{eqnarray*}

As in \cite{DYNCON-TAC16} and \cite{HDR-LENA}, to analyze the  behavior of the network system \eqref{ntwksyst}, we acknowledge its dichotomous nature. In these references as well as in many others---see, e.g., \cite{montenbruck2015practical,hill2015,delellis2011quad}, synchronization is defined as the property of the trajectories of each individual system following the trajectories of an ``averaged'' unit with state 
\begin{equation}
  \label{221} x_m := [ v_l^\top \otimes I_n ] x.
\end{equation}
The quotes in ``averaged'' are superfluous in the case of undirected networks, in which case $v_l = 1_N$, so $x_m = \frac{1}{N} \sum_{i=1}^N x_i$, but for directed connected networks the state $x_m$ is more generically defined as a weighted average of the respective systems' states since ${v_l}_i \geq 0$ for all $i \in \left\{ 1,2,...,N \right\}$ and $ v_l^\top 1_N = 1$.

In either case,   a sensible way to define the synchronization errors $e$ is as the difference between the units' states and $x_m$,  that is,
\begin{equation} \label{301} 
e := {x} - [ 1_N \otimes I_n ] x_m .
\end{equation}
Thus,  in \cite{DYNCON-TAC16} and \cite{HDR-LENA} the collective behavior of network systems is studied in function of the dynamics of the ``averaged'' unit $x_m$ and that of the synchronization errors $e$. 

In the next section, we introduce another change of coordinates to rewrite  system \eqref{ntwksyst} in an equivalent form that exhibits two motions;  one that is generated by the averaged dynamics and another by a {\it projection} of the synchronization errors $e$ on a certain subspace.   This coordinate transformation is not a simple artifice for analysis, it exhibits two time-scales that are inherent to networked systems satisfying Assumption \ref{ass3} and subject to a sufficiently large coupling $\sigma$.  

\subsection{Intrinsic two-time-scales decomposition}

After Assumption \ref{ass3} and Remark \ref{rmk1}, because $\lambda_1=0$ has multiplicity one, \color{black} the Laplacian admits the following Jordan-block decomposition of over $\mathbb{R}^{N \times N}$: 
\begin{align} \label{A3}
L = U \begin{bmatrix} 0 & 0 \\ 0 & \Lambda \end{bmatrix} U^{-1},
\end{align}
where $ \Lambda \in \mathbb{R}^{(N-1) \times (N-1)}$ is composed by the Jordan blocks corresponding to the  $N-1$ non-zero  eigenvalues. 

\begin{remark}
Note that even though a Jordan decomposition does not necessarily exist with a real matrix $U$,  it is always possible to use the spectral-invariant-subspace decomposition as in \Lemspecres\ to generate a real matrix $U$.
\end{remark}

The convertible matrix $U$ is constituted, column-wise, of the right eigenvector of the Laplacian, $1_N$,  and a left-invertible  matrix  $V\in \mathbb{R}^{N \times (N-1)}$, which consists of the eigenvectors corresponding to the nonzero eigenvalues of $L$. That is, 
\begin{align} \label{eqU}
U = [1_N~~V], \qquad U^{-1} = \begin{bmatrix} v_{l}^\top \\ V^\dag  \end{bmatrix},
\end{align}
where  $V^{\dag} \in \mathbb{R}^{(N-1) \times N}$, and 
\begin{align} \label{eqprod}
v_l^\top V = 0,~~~ V^\dag V = I_{N-1}.
\end{align}
So, using \eqref{eqU} and \eqref{eqprod}, we also have the useful identity
\begin{align*}
	V V^\dag = I_N - 1_N v_l^\top.
\end{align*}

Now, using $U^{-1}$,  we define the new coordinates 
\begin{eqnarray}
\label{barx}
\bar x := [ U^{-1} \otimes I_n ] x
\end{eqnarray}
and their inverse transformation
\begin{equation}
\label{invbarx}
  x := [ U \otimes I_n ] \bar x.
\end{equation}
The interest of the coordinate $\bar x$ is that it consists in the familiar ``averaged'' states $x_m$ and a  projection of the synchronization errors $e$ defined in \eqref{301} onto the subspace that is generated by $V^\dagger$, which is orthogonal to the right eigenvector  $1_N$. To better see this, note that such projection yields 
\begin{align*}
  [ V^\dag \otimes I_n ]  e & =  [ V^\dag \otimes I_n ] \big[ x - [ 1_N \otimes I_n ] x_m \big] \\
\end{align*}
In the sequel, we refer to the left-hand side of the latter equation as the projected synchronization errors, 
\begin{equation}
  \label{366} e_v :=  [ V^\dag \otimes I_n ] x. 
\end{equation}
Hence,  in view of \eqref{221}, \eqref{eqU}, \eqref{barx}, and \eqref{366}, we have
\begin{eqnarray}
\label{282} 
\bar x = \left[ 
            \begin{array}{c} 
              x_m \\ e_v 
            \end{array} 
          \right] = 
 \left[ \begin{array}{c}  
     [ v_l^\top \otimes I_n ] x \\[2pt]
     [ V^\dag \otimes I_n ] x 
     \end{array} 
 \right].
\end{eqnarray}

In the new coordinates, the network system \eqref{ntwksyst} is equivalently written as 
\begin{align*}
\dot{\bar{x}}= [ U^{-1} \otimes I_n ] \big[ F(x) - \sigma [ L \otimes I_n ] x \big],
\end{align*}
which consists in two interconnected dynamics, that of the ``averaged'' state  $x_m$ and that of the projected synchronization errors $e_v$. Therefore, the behavior of the trajectories of \eqref{ntwksyst} may be assessed via the behavior of the latter dynamics.  To this end,  we use  $\bar{x} = [x_m^\top ~~ e_v^\top]^\top$ and $U = [1_N~~V]$ in \eqref{invbarx} to write
\begin{equation}  \label{293} 
   x =  [1_N \otimes I_n ] x_m  + [V \otimes I_n] e_v.
\end{equation}

So, differentiating on both sides of \eqref{221},  using \eqref{ntwksyst}, \eqref{293}, and the fact that $v_l^\top L = 0$, we obtain
\begin{equation}  \label{A8}  
  \dot x_m = {F}_m(x_m) + G_m(x_m,e_v),
\end{equation}
where $F_m(x_m) := [ v_l^\top \otimes I_n ] F \big(\, [1_N \otimes I_n ] x_m \big) $ and 
\begin{align*}
G_m(x_m,e_v)  & :=  [ v_l^\top \otimes I_n ] 
   \big[ F \big(\,   [ 1_N \otimes I_n ] x_m  + [V \otimes I_n] e_v \big)   
     \\ & \hspace{10.8em}  - F \big(\, [1_N \otimes I_n ] x_m \big)\, \big].
\end{align*}
Note that $F_m(x_m)$ effectively corresponds to an ``averaged'' dynamics of the systems in \eqref{dxifi},  
$$
{F}_m(x_m) = \sum_{i=1}^N v_{li}f_i(x_m),
$$
$G_m(x_m,0) = 0$ and, under Assumption \ref{ass1}, all these functions are smooth and there exists a continuous function $h: \mathbb{R}^{nN}\to \mathbb{R}_{\geq 0}$ such that  
\begin{equation} \label{ineqGm}
\big| G_m(x_m,e_v) \big|  \leq  h(x_m,e_v)|e_v|  \qquad  \forall (x_m,e_v) \in \mathbb{R}^{nN}.
\end{equation}

On the other hand, by differentiating on both sides of $e_v = [ V^\dag \otimes I_n ] x$ and using \eqref{A3}, \eqref{eqU}, and \eqref{293}, we obtain the {\it synchronization-errors  dynamics} 
\begin{equation}  \label{A9} 
  \dot{e}_v  =  - \sigma [ \Lambda \otimes I_n ] e_v + G_e(x_m,e_v) ,
\end{equation}
where 
$$ G_e(x_m,e_v) := [ V^\dag \otimes I_n ] F\big( \, [ V \otimes I_n ] e_v + [ 1_N \otimes I_n ] x_m \big).  $$

The complete collective behavior of the networked control system \eqref{ntwksyst}, up to the globally invertible coordinate transformation in \eqref{barx}, may be assessed by analyzing that of the interconnected systems \eqref{A8} and \eqref{A9}. We see that the systems in \eqref{dxifi} under the action of the control laws in \eqref{ui} {\it synchronize} if the errors $e_v$ tend asymptotically to zero.   However, the characterization of the networked systems' behavior would be incomplete unless one can ascertain what the individual systems do {\it when} they synchronize.  
Indeed, {\it a priori}, not even boundedness of solutions is guaranteed 
(whence Assumption \ref{ass2}). 
To assess any kind of stable behavior, we analyze the network system \eqref{ntwksyst} {\it on} the synchronization subspace corresponding to $e_v = 0$.  On such a subspace, we have the 
{\it reduced-order dynamics}
\begin{equation} 
\label{redsyst}
\dot{x}_m  = F_m(x_m).  
\end{equation}

So,  it is clear that the motion of the synchronized systems is fully determined by that of the reduced-order dynamics \eqref{redsyst}. In this regard, it is important to underline that \eqref{redsyst}, as well as the ``averaged'' dynamics \eqref{A8} are independent of the coupling gain $\sigma$. This dynamics is {\it inherent} to  the network and appears simply as a consequence of the graph's connectivity imposed by Assumption \ref{ass3}. The synchronization dynamics \eqref{A9}, on the other hand, clearly depends on the coupling strength $\sigma$. In this paper we are interested in investigating the synchronization behavior for `large' values of the coupling strength. More precision about the meaning of `large' is given farther below.

We consider two scenarii of major interest. The first pertains to the case in which the reduced-order dynamics \eqref{redsyst} admits the origin as a globally asymptotically stable equilibrium point. This case covers, in particular, the classical problem of consensus for heterogeneous nonlinear systems interconnected over generic connected graphs, since in this case $\dot x_m \equiv 0$. Our main statement in this case (Theorem \ref{thm:gas}) is that not only the networked system achieves dynamic consensus but the origin $\{ x = 0\}$ is GAS for \eqref{ntwksyst}.  The second scenario pertains to the case in which the reduced-order dynamics  admits an unstable equilibrium and a stable periodic orbit.  Our main statement in this case (Theorem \ref{thm2}) establishes sufficient conditions for almost global asymptotic stability.  For instance, for a network of periodic heterogeneous nonlinear oscillators, it is possible to assess the conditions under which they synchronize {\it and} to characterize the resulting collective oscillatory behavior. 

The analysis of \eqref{ntwksyst} is carried out using singular-perturbations theory.  For this, we write the network system \eqref{ntwksyst} in the familiar singular-perturbation form \cite{kokotovic1999singular,KHALIL96} 
\begin{subequations}\label{singpertsyst}
  \begin{eqnarray}
\label{A10} \dot{x}_m &\hspace{-4pt} = &\hspace{-4pt} F_m(x_m) + G_m(x_m,e_v)   \\ 
\label{A11} \varepsilon \dot e_v &\hspace{-4pt} = &\hspace{-4pt} - \left( \Lambda \otimes I_n \right) e_v 
                   +  \varepsilon  G_e(x_m,e_v), \quad \varepsilon := 1/\sigma, \qquad \
\end{eqnarray}
\end{subequations}
in which we recognize two time scales, ``slow'' and ``fast'', corresponding, respectively, to the dynamics of the averaged-unit states $x_m$ and the projected synchronization errors $e_v$.  Now,  in accordance with singular-perturbation theory; see  \cite[p.\,358]{ KHALIL96},  the behavior of \eqref{singpertsyst} is ineluctably determined by that of the {\it slow} dynamics, obtained by setting $\varepsilon =0$,  which clearly corresponds to the reduced-order model \eqref{redsyst}.  Thus, the rest of the paper is devoted to the analysis of \eqref{singpertsyst} in the two cases evoked above. 

Even though the analysis relies on the study of the system in singularly-perturbed form, \eqref{singpertsyst}, we remark that our two main statements are formulated for system \eqref{ntwksyst}, which remains the main subject study in this paper.  Therefore, we finish this section by re-expressing the properties of \eqref{ntwksyst} in Assumptions \ref{ass3}--\ref{ass2}  in terms of \eqref{singpertsyst}, in the form of the following, rather evident, statement that is extensively used in the sequel.
\begin{lemma}   \label{fact1}
Consider system \eqref{ntwksyst} such that  Assumptions \ref{ass3}--\ref{ass2} hold.  Then,  the resulting system \eqref{singpertsyst},  with states defined in \eqref{282},  enjoys the following properties: 
\begin{itemize}
\item[(i)] the functions $F_m$, $G_m$, and $G_e$ are  continuously differentiable;
\item[(ii)] the origin $\{(x_m,e_v) = (0,0)\}$ as an isolated equilibrium point;
\item[(iii)] the solutions to \eqref{singpertsyst} are globally uniformly (in $\sigma$) ultimately bounded; 
\item[(iv)] the matrix $\Lambda$ is Hurwitz.\\[-25pt] 
\end{itemize}
\end{lemma}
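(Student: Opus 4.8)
The statement to prove, Lemma \ref{fact1}, merely re-packages Assumptions \ref{ass3}--\ref{ass2} and the preceding derivation in the language of the singularly-perturbed system \eqref{singpertsyst}, so the proof is essentially a matter of tracing back each item to the relevant hypothesis or to an identity already established in Section \ref{sec:model}. I will address the four items in turn.

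\textbf{Item (i).} The functions $F_m$, $G_m$, $G_e$ are built from the $f_i$ by composition with the linear maps $[1_N\otimes I_n]$, $[V\otimes I_n]$, $[v_l^\top\otimes I_n]$, $[V^\dag\otimes I_n]$ and by taking differences; see the explicit formulas following \eqref{A8} and \eqref{A9}. Under Assumption \ref{ass1} each $f_i$ is $C^1$, hence so is $F$, and $C^1$ regularity is preserved by composition with linear maps and by finite sums and differences. Therefore $F_m$, $G_m$, $G_e$ are $C^1$. In the same breath one records the bound \eqref{ineqGm}: it follows from $G_m(x_m,0)=0$ together with $C^1$ regularity, by writing $G_m(x_m,e_v)=\big(\int_0^1 \partial_{e_v}G_m(x_m,se_v)\,ds\big)e_v$ and setting $h(x_m,e_v):=\big|v_l^\top\otimes I_n\big|\,|V\otimes I_n|\int_0^1\big|DF([1_N\otimes I_n]x_m+s[V\otimes I_n]e_v)\big|\,ds$, which is continuous.

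\textbf{Items (ii) and (iv).} For (iv), recall from Remark \ref{rmk1} that under Assumption \ref{ass3} every nonzero eigenvalue of $L$ has strictly positive real part, and by construction \eqref{A3} the matrix $\Lambda$ collects exactly the Jordan blocks of those $N-1$ nonzero eigenvalues; hence the spectrum of $\Lambda$ lies in the open right half-plane, i.e. $-\Lambda$ is Hurwitz (equivalently $\Lambda$ is Hurwitz in the sign convention of \eqref{A11}, where the linear part is $-(\Lambda\otimes I_n)/\varepsilon$). For (ii), observe that $(x_m,e_v)=(0,0)$ corresponds, via the invertible transformation \eqref{barx}, to $x=0$; since $F_i(0)=f_i(0)=0$ by Assumption \ref{ass1} and $[L\otimes I_n]0=0$, the point $x=0$ is an equilibrium of \eqref{ntwksyst}, equivalently $(0,0)$ is an equilibrium of \eqref{singpertsyst}. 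Isolation follows because $F$ is $C^1$ with the property that, in a neighbourhood of the origin, the only zero of $F(x)-\sigma[L\otimes I_n]x$ is $x=0$ --- this is where I would either invoke an additional standing hypothesis (the $f_i$ have isolated zeros at the origin) or, more carefully, note that the inverse-function/implicit-function argument needs the Jacobian $DF(0)-\sigma[L\otimes I_n]$ to be nonsingular; since this is the only genuinely non-automatic point, it is the main obstacle and should be flagged, and most likely the intended reading is that isolation of the origin is inherited directly from the corresponding property assumed (implicitly) of each $f_i$ near $0$ (consistent with calling the statement ``rather evident'').

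\textbf{Item (iii).} Global uniform (in $\sigma$) ultimate boundedness of the solutions of \eqref{singpertsyst} is a direct consequence of Lemma \ref{lem0}: that lemma provides $\sigma^\ast>0$ and $r>0$ such that every solution $x(\cdot)$ of \eqref{ntwksyst} with $|x_o|\le R$ satisfies $|x(t)|\le r$ for all $t\ge\tau_R$ and all $\sigma\ge\sigma^\ast$. Since $\bar x=[U^{-1}\otimes I_n]x$ and $x=[U\otimes I_n]\bar x$, and $(x_m,e_v)$ are the blocks of $\bar x$ as in \eqref{282}, the bound transfers with constants rescaled by $|U^{-1}\otimes I_n|=|U^{-1}|$ and $|U\otimes I_n|=|U|$: taking $\bar r:=|U^{-1}|\,r$ and, for a ball of radius $R$ in the $\bar x$-coordinates, $R':=|U|R$ so that $\tau:=\tau_{R'}$, one gets $|(x_m(t),e_v(t))|=|\bar x(t)|\le\bar r$ for all $t\ge\tau$ and all $\varepsilon=1/\sigma\le 1/\sigma^\ast$, which is exactly global uniform (in $\varepsilon$, equivalently in $\sigma$) ultimate boundedness of \eqref{singpertsyst}. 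This completes the proof, the only subtlety being the isolation claim in (ii) noted above.
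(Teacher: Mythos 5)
Your proposal is correct and follows exactly the route the paper intends: the paper offers no proof of Lemma \ref{fact1} at all (it is declared ``rather evident''), and the intended argument is precisely your bookkeeping---(i) from Assumption \ref{ass1} and composition with the fixed linear maps built from $U$, (iii) from Lemma \ref{lem0} transferred through the invertible transformation \eqref{barx} with constants rescaled by $|U|$, $|U^{-1}|$, and (iv) from Remark \ref{rmk1} and the construction \eqref{A3}, with your remark that $\Lambda$ is ``Hurwitz'' only in the sign convention of \eqref{A11} (its spectrum lies in the open right half-plane, so $-\Lambda$ is Hurwitz and $\dot e_v=-(\Lambda\otimes I_n)e_v$ is exponentially stable) being, if anything, more careful than the paper's own wording.

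Your flag on item (ii) is also well placed and is not resolved by the paper either: isolation of the origin does not follow from Assumptions \ref{ass3}--\ref{ass2} alone. For instance, if each $f_i$ vanishes identically on a neighborhood of the origin but is strictly dissipative for large $|x|$ (so Assumptions \ref{ass1} and \ref{ass2} hold), then every point $x=1_N\otimes c$ with $|c|$ small is an equilibrium of \eqref{ntwksyst}, since $F(x)=0$ and $[L\otimes I_n](1_N\otimes c)=0$; hence $(0,0)$ is not isolated for \eqref{singpertsyst}. So item (ii) must be read as an implicit standing hypothesis (or verified in each application, as it is implicitly in Section \ref{A-H}, where the linearization of the reduced dynamics at $0$ is nonsingular), rather than as a consequence of the stated assumptions; your instinct to tie it to nonsingularity of $DF(0)-\sigma[L\otimes I_n]$ or to an isolated-zero property of the $f_i$ is the right way to make it rigorous.
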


\section{Case I: Global asymptotic stability}\label{sec:case1}

Consider the network system \eqref{ntwksyst} in its equivalent singular-perturbation representation \eqref{singpertsyst}.  In the case that the reduced-order dynamics  \eqref{redsyst} admits the origin as a globally asymptotically stable  equilibrium, under Assumptions \ref{ass3}--\ref{ass2}, global asymptotic stability  of the origin for  \eqref{singpertsyst}  follows  under a sufficiently small perturbation 
$\varepsilon>0$.   According to \cite{teel1999semi},  we recall the  practical-stability notions of a closed subset $\mathcal{A} \subset \mathbb{R}^n$ for a general nonlinear system of the form
\begin{align} \label{eqPstab}
\dot{x} = f(x, \varepsilon) \qquad x\in \mathbb{R}^n, \quad \varepsilon\in [0,1].  
\end{align}
\begin{itemize}
\item The set $\mathcal{A}$ is globally practically attractive if,  for each $\beta > 0$, there exists  $\varepsilon^{\star} > 0$ such that, for each $\varepsilon \leq \varepsilon^\star$,  every solution $x(t)$ to \eqref{eqPstab},  there exists $T>0$ such that  $||x(t_o + T)||  \leq \beta$.
\item The set $\mathcal{A}$ is  globally practically stable if there exists $\kappa \in \mathcal{K}$ such that,  for each $\beta > 0$,  there exists  $\varepsilon^{\star} > 0$ such that, for each $\varepsilon \geq \varepsilon^\star$,  we have 
$$ |x(t)| \leq \kappa \left( |x(t_o)|  \right) + \beta  \quad \forall t \geq t_o.  $$
\item The set $\mathcal{A}$ is globally asymptotically practically stable if it is globally practically attractive and globally practically stable.   
\end{itemize}

Our first statement is the following. 

\begin{theorem}[GAS]\label{thm:gas}  
Consider system \eqref{ntwksyst}  under Assumptions \ref{ass3}--\ref{ass2}.  In addition, assume that for  system \eqref{redsyst}, the origin $\{ x_m = 0 \}$  is globally asymptotically stable.   Then, 
\begin{itemize}
  \item[(i)] the origin for \eqref{ntwksyst} is globally asymptotically practically stable.
\end{itemize}
Furthermore, assume in addition that there exists a continuously differentiable Lyapunov function $ V_m : \mathbb{R}^n \rightarrow \mathbb{R}_{\geq 0}$ and a class $\mathcal K_\infty$ function $\alpha$ such that 
\begin{align} \label{eqder}
\frac{\partial V_m(x_m)}{\partial x_m^\top} F_m(x_m)   \leq - \alpha\left( \left| x_m \right| \right)^2
\end{align}
and there exists $c_r>0$ such that, for all $\bar x \in B_r$---see \mbox{\eqref{282}},
\begin{align} \label{eqder1}
\displaystyle\max_{\bar x \in B_r} \left\{  \big| G_e(x_m,e_v) - G_e(0,e_v) \big|, \left| \frac{\partial V(x_m)}{\partial x_m} \right|  \right\} \leq  c_r \alpha \left( \left| x_m \right| \right),
\end{align}
where $r > 0$ is the global ultimate bound established in Lemma \ref{lem0}. Then, 
\begin{itemize}
\item[(ii)] there exists $\sigma^\star>0$ such that, for all $ \sigma \geq \sigma^\star $, the origin for  \eqref{ntwksyst}  is globally asymptotically stable.  \\[-25pt] 
\end{itemize}
\end{theorem}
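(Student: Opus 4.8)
The plan is to analyze the singularly-perturbed system \eqref{singpertsyst} directly, exploiting the cascade-like structure that emerges because the fast subsystem \eqref{A11} has a globally exponentially stable equilibrium (the matrix $\Lambda$ is Hurwitz, by Lemma \ref{fact1}(iv)). Part (i) should follow by invoking a standard semiglobal practical stability result for singular perturbations on compact sets: by Lemma \ref{lem0}, all trajectories enter a fixed ball $B_r$ independent of $\sigma$ after a finite time, so it suffices to work on this compact set; there the boundary-layer system $\varepsilon\dot e_v=-(\Lambda\otimes I_n)e_v$ is globally exponentially stable uniformly in the frozen $x_m$, and the reduced system \eqref{redsyst} has a GAS origin by hypothesis. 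Tikhonov-type arguments (as in \cite{teel1999semi}, or \cite[Chap.~11]{KHALIL96}) then give that the origin of \eqref{ntwksyst} is globally asymptotically practically stable, which is exactly (i).

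For part (ii), the idea is to upgrade practical stability to genuine asymptotic stability by building a composite Lyapunov function. First I would take $V_m$ from \eqref{eqder} for the slow part and, since $\Lambda$ is Hurwitz, solve the Lyapunov equation to obtain $P=P^\top>0$ with $P(\Lambda\otimes I_n)+(\Lambda\otimes I_n)^\top P = I$ (up to a Kronecker factor $I_n$), giving $W(e_v):=e_v^\top P e_v$. Consider the candidate $\mathcal V(x_m,e_v):=V_m(x_m)+\mu\, W(e_v)$ for a small constant $\mu>0$ to be fixed. Differentiating along \eqref{singpertsyst}:
\begin{align*}
\dot{\mathcal V} &= \frac{\partial V_m}{\partial x_m^\top}F_m(x_m) + \frac{\partial V_m}{\partial x_m^\top}G_m(x_m,e_v) - \frac{\mu}{\varepsilon}\,|e_v|^2 + 2\mu\, e_v^\top P\,G_e(x_m,e_v) \\
&\leq -\alpha(|x_m|)^2 + \left|\frac{\partial V_m}{\partial x_m}\right|\,h(x_m,e_v)|e_v| - \frac{\mu}{\varepsilon}|e_v|^2 + 2\mu\,|P|\,|e_v|\,|G_e(x_m,e_v)|.
\end{align*}
The crucial point is how to dominate the cross term $|\partial V_m/\partial x_m|\,h\,|e_v|$ and the $G_e$ term. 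On the compact set $B_r$ guaranteed by Lemma \ref{lem0}, $h$ is bounded, say by $h_r$. Condition \eqref{eqder1} provides the two estimates one needs on $B_r$: it bounds $|\partial V_m/\partial x_m|$ by $c_r\alpha(|x_m|)$, so the cross term is at most $c_r h_r\,\alpha(|x_m|)|e_v|\leq \tfrac12\alpha(|x_m|)^2 + \tfrac12 c_r^2 h_r^2 |e_v|^2$; and it bounds $|G_e(x_m,e_v)-G_e(0,e_v)|$ by $c_r\alpha(|x_m|)$, while $G_e(0,e_v)$ can be handled via $G_e(0,0)=0$ together with smoothness (so $|G_e(0,e_v)|\leq \ell_r|e_v|$ on $B_r$). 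Hence $|e_v|\,|G_e(x_m,e_v)| \leq c_r\alpha(|x_m|)|e_v| + \ell_r|e_v|^2$, and the $\alpha$-part again splits via Young's inequality into $\tfrac14\alpha(|x_m|)^2$ plus a multiple of $|e_v|^2$. Collecting terms,
\[
\dot{\mathcal V} \leq -\tfrac14\,\alpha(|x_m|)^2 - \left(\frac{\mu}{\varepsilon} - c_1 - \mu c_2\right)|e_v|^2
\]
on $B_r$, for constants $c_1,c_2$ depending only on $r$ (through $h_r,\ell_r,c_r,|P|$). Choosing $\mu$ small enough that $\mu c_2 < c_1$ does not help directly, so instead fix any $\mu>0$ and then choose $\sigma^\star$ large enough (i.e. $\varepsilon=1/\sigma$ small enough) that $\mu/\varepsilon - c_1 - \mu c_2 \geq \tfrac12\mu/\varepsilon > 0$; this gives $\dot{\mathcal V}\leq -\tfrac14\alpha(|x_m|)^2 - \tfrac{\mu\sigma}{4}|e_v|^2 < 0$ for $(x_m,e_v)\neq 0$ on $B_r$. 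Since $\mathcal V$ is positive definite and $B_r$ is forward invariant after finite time and attracting for all initial conditions (Lemma \ref{lem0}), this yields local asymptotic stability of the origin on $B_r$ together with global attractivity of $B_r$, hence global asymptotic stability of the origin of \eqref{singpertsyst}, and therefore of \eqref{ntwksyst} via the invertible transformation \eqref{barx}.

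The main obstacle is the bookkeeping around the term $G_e(0,e_v)$: condition \eqref{eqder1} only controls the \emph{difference} $G_e(x_m,e_v)-G_e(0,e_v)$, so one must separately argue—using $G_e(0,0)=0$, continuous differentiability, and compactness of $B_r$—that $G_e(0,e_v)$ is Lipschitz in $e_v$ on $B_r$, which is what allows its contribution to be absorbed into the $-\mu\sigma|e_v|^2/\varepsilon$ term for large $\sigma$. A secondary subtlety is that everything (the constants $h_r$, $\ell_r$, and hence $\sigma^\star$) is tied to the ultimate bound $r$ rather than being truly global; this is legitimate precisely because Lemma \ref{lem0} makes $r$ independent of $\sigma$, so the threshold $\sigma^\star$ can be chosen uniformly. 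One should also confirm that the convergence toward $B_r$ does not interfere with the Lyapunov decrease—standard, since outside $B_r$ ultimate boundedness already forces entry into $B_r$ in finite time, after which the composite Lyapunov argument takes over.
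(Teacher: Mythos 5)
Your proposal is correct and, for Item (ii), follows essentially the same route as the paper: a composite Lyapunov function $V_m(x_m)+e_v^\top (P\otimes I_n)e_v$ on the ultimate-bound ball $B_r$, the same splitting $G_e(x_m,e_v)=G_e(0,e_v)+[G_e(x_m,e_v)-G_e(0,e_v)]$ with the linear bound $|G_e(0,e_v)|\leq d_r|e_v|$ obtained from smoothness and $G_e(0,0)=0$ (the paper's \eqref{eq:born1}), condition \eqref{eqder1} for the gradient and difference terms, Young's inequality, and a large-$\sigma$ threshold. The only divergence is Item (i), which you settle by citing standard practical-stability results (consistent with the paper's own remark pointing to \cite{DYNCON-TAC16}), whereas the paper gives a self-contained proof via a converse Lyapunov function on $B_r\backslash B_{r_f}$ showing GAS of $B_{r_f}$ for every $r_f<r$; this is a matter of self-containedness, not a gap.
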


\begin{proof}
The statement in Item (i) follows by establishing global asymptotic practical stability of the origin $\{ \bar x = 0 \}$ for  \eqref{singpertsyst}. To this end, we first remark that, after Assumptions \ref{ass1}-\ref{ass2} and  Lemma \ref{fact1},   system \eqref{singpertsyst} is globally uniformly (in $\sigma$) ultimately bounded.  That is,  there exist $\sigma^\ast>0$ and  $r>0$ such that, for any $R \geq 0$, there exists $\tau_{R} \geq 0$  such that  
\begin{align*} 
\left|\bar x_o \right| \leq R \Rightarrow \left| \bar x(t) \right| \leq &\, r  \quad \forall t \geq \tau_{R}, ~  \forall \sigma \geq  \sigma^\ast.
\end{align*} 
Hence, it suffices to show (i) only inside the  compact set   
$B_r$.   To do so,  we show that,  for every positive constant $r_f < r$,  there exists $\sigma_f>0$ such that, for each $\sigma \geq \sigma_f$,   the compact set $B_{r_f}$ is globally asymptotically stable for \eqref{singpertsyst} on $B_r$ \cite{teel1999semi}.   We first rewrite the perturbed system \eqref{singpertsyst} in the following form
\begin{align}
\dot{x}_m = & {F}_m(x_m) +  {G}_m(x_m,e_v)   \nonumber  \\ 
\varepsilon \dot{e}_v = & - \left( \Lambda \otimes I_n \right) e_v + \varepsilon G_e(0,e_v) 
\nonumber \\ & + \varepsilon \left[ G_e(x_m,e_v) -G_e(0,e_v) \right].   \label{A11biz}
\end{align}
Next,  using  Assumption \ref{ass1}, we conclude the existence of a constant $d_r > 0$ such that, for each $\bar x \in B_r$, 
\begin{align} 
 \max_{\bar x \in B_r}  \left\{ \left| {G}_m(x_m,e_v) \right|, \left| G_e(0,e_v) \right| \right\}  & \leq d_r \left| e_v \right|,  \label{eq:born1}
\\
\left|  G_e(x_m,e_v) - G_e(0,e_v) \right|  & \leq  d_r \left| x_m \right| \qquad  \forall \bar x \in B_r.
   \label{eq:born2}
\end{align}

Furthermore,  after Item (iv) in Lemma \ref{fact1},  the linear system 
$$ \dot{e}_v = - \left( \Lambda \otimes I_n \right) e_v $$ 
is exponentially stable.  As a result,  there exists $P \in \mathbb{R}^{(N-1) \times (N-1)}$  symmetric and positive definite such that 
$$  P \Lambda  + \Lambda^\top P \leq - I_N.  $$

Finally,  we use the converse Lyapunov theorem in  
\cite[Theorem 3.14]{KHALIL96} to conclude that,  since the origin for \eqref{redsyst} is globally asymptotically stable,  there  exists a function $V_m : \mathbb{R}^n \rightarrow \mathbb{R}_{\geq 0}$ and positive constants $\alpha_{r r_f}$ and  $\beta_{r r_f}$  such that,  for each $\bar x \in B_r \backslash B_{r_f}$,  we have 
\begin{eqnarray}
   \label{481} \left|\frac{\partial V_m(x_m) }{\partial x_m} \right| &\  \leq\ & \beta_{r r_f}\left|x_m\right|,
\\
\frac{\partial V_m(x_m) }{\partial x_m^\top} F_m(x_m)  &\ \leq\ & - \alpha_{r r_f} \left|x_m\right|^2.
   \label{484}
\end{eqnarray}
Therefore, under \eqref{eq:born1}--\eqref{484},  the total derivative of the  Lyapunov function candidate   
\begin{align} \label{eqVVV}
V(\bar x) := V_m(x_m) + V_e(e_v), 
\end{align}
where  $V_e(e_v) := e_v^\top (P \otimes I_n) e_v$, satisfies 
\begin{align*}
\dot V(\bar x)  \leq & - \alpha_{r r_f} |x_m|^2 
+ d_r \left[ \lambda_{max}(P) + \beta_{r r_f} \right]  \left| x_m \right| \left| e_v \right| 
\\ & - \left[ \sigma - d_r \lambda_{max}(P) \right] \left| e_v \right|^2 \qquad  \forall\, \bar x \in B_r \backslash B_{r_f} .
\end{align*}
Thus,   for $\sigma_f := d_r^2 \left( \lambda_{max}(P) + \beta_{r r_f}\right)^2 + 2 d_r \lambda_{max}(P) $, we conclude that,  for each $\sigma \geq \sigma_f$,  we have 
\begin{align*}
\dot{V}(e_v, x_m) \leq  - \frac{1}{2} \alpha_{r r_f} |x_m|^2 - \frac{1}{2} \sigma \left|e_v\right|^2 \qquad  \forall \, \bar x \in B_r \backslash B_{r_f}.
\end{align*}

The statement  in Item (ii) \color{black} follows by establishing global asymptotic stability of the origin $\{ \bar x = 0 \}$ for  \eqref{singpertsyst}.   However,  since \eqref{singpertsyst} is globally uniformly (in $\sigma$) ultimately bounded,  it is enough to prove that the origin is asymptotically stable on for all solutions contained in $B_r$.    So in the rest of the proof,  we restrict the analysis on $B_r$.   Let us reconsider the Lyapunov function $V$ introduced in \eqref{eqVVV}.  Note that the time derivative of $V$ along the solutions to \eqref{singpertsyst} satisfies 
\begin{equation}
\label{dotV}
\begin{aligned}
\dot V(\bar x) &= 
- \alpha(|x_m|)^2  +\frac{\partial V_m}{\partial x_m^\top} G_m(x_m,e_v)
  - \sigma \left|e_v\right|^2 
 \\ &   + 2 e_v^\top P \left[ G_e(0,e_v) + \left( G_e(x_m,e_v) - G_e(0,e_v) \right)  \right].
\end{aligned}
\end{equation}

Now,  after \eqref{eq:born1} and \eqref{eqder1},  we conclude that
\begin{equation}
\label{eqbounddd}
\begin{aligned} 
\frac{\partial V(\bar x) }{\partial x_m} G_m (x_m,e_v)  &  \leq c_r d_r \alpha \left( \left| x_m \right| \right)  \left| e_v \right|  \qquad  \forall\, \bar x \in B_r, 
\end{aligned}
\end{equation}
and, consequently, from \eqref{dotV}, \eqref{eq:born1}, and \eqref{eqbounddd},  it follows that
\begin{align*}
\dot V(\bar x) \leq & - \alpha (|x_m|)^2 + c_r [2 \lambda_{max}(P) + d_r] \alpha( | x_m | )| e_v | \\
&  -  [\sigma - 2d_r \lambda_{max}(P) ]| e_v |^2 \qquad  \forall \bar x \in B_r.
\end{align*} 
Therefore,  for  $\sigma^{\star} := c_r^2 [ \lambda_{max}(P) + d_r ]^2 + 4 d_r \lambda_{max}(P)$,  we conclude that, for each $\sigma \geq \sigma^\star$,
\[   \dot V(\bar x) \leq  - \frac{1}{2} \alpha (|x_m|)^2 - \frac{1}{2} \sigma \left|e_v\right|^2 \qquad  \forall\, \bar x \in B_r.  \]
\end{proof}

\begin{remark}
 The first statement in Theorem \ref{thm:gas} can be deduced using \cite[Corollaries 1 and 3]{DYNCON-TAC16},  a detailed proof is included in this paper  for the sake of self containedness.  
 \end{remark}
 
 \begin{remark}
 The regularity conditions in \eqref{eqder}-\eqref{eqder1} are required to ensure negativity of the time derivative of the Lyapunov function $V$ along the solutions to \eqref{singpertsyst}.    Even though they may appear conservative, since they are certainly not necessary ---see \cite[Exercise 9.24]{KHALIL96},  it is important to stress that the origin is not necessarily globally asymptotically stable if these conditions do not hold.  In \SectionAH\ we provide an example that illustrates this claim. Furthermore, the inequalities \eqref{eqder}-\eqref{eqder1} are required to hold only in a compact set containing the origin.
\end{remark}

\section{Case II: Periodic behavior} \label{sec:case2}  

In this section,  we present our second and main statement,  which pertains to the case where \eqref{redsyst} admits a periodic orbit that is attractive from almost all initial conditions. Under this condition,  Theorem \ref{thm2} below establishes that for $\sigma > 0$ sufficiently large,  the network system \eqref{ntwksyst} also admits a unique periodic orbit and it is globally attractive from almost all initial conditions.   In particular,  frequency synchronization is achieved and the synchronization errors can be made arbitrary small by choosing $\sigma$ sufficiently large.  It is important to stress that our main statement  establishes  a precise periodic behavior for the network system \eqref{ntwksyst} rather that just approaching the periodic solution to \eqref{redsyst}. 

For completeness and clarity,  we start by recalling some notions and tools related to the stability of periodic solutions to nonlinear systems.   

Consider the system
\begin{equation} \label{127bis}
\dot x = f(x) \qquad x \in \mathbb{R}^n,
\end{equation}
where $f  : \mathbb{R}^n \rightarrow \mathbb{R}^n$ is locally Lipschitz. 

\begin{definition}[Periodic solution and periodic orbit] 
A solution $t\mapsto\phi(t)$, or simply $\phi(t)$,  to \eqref{127bis} is said to be  $\alpha$-periodic if there exists  $\alpha>0$ (the period) such that,  for each $t\geq 0$, 
$$ \phi(t+\alpha)=\phi(t) \quad\mbox{\rm and}  \quad \phi(t+s)\neq \phi(t)   \quad \forall  s\in (0,\alpha).   $$

Moreover,  if the system  \eqref{127bis} admits a periodic solution 
$\phi$,  we say that it admits a (closed) periodic orbit $\gamma \subset \mathbb{R}^{n}$ generated by the image of  $\phi$.  
\end{definition}

Then,  according with Lyapunov theory, we may single out the following desired properties for periodic solutions. 

\begin{definition}[Orbital Stability] 
Let $\gamma$ be a periodic orbit for \eqref{127bis}. 
\begin{itemize}
\item The orbit $\gamma$  is  orbitally stable if,  for each $ \varepsilon > 0$,  there exist $\delta > 0$ and $T \geq 0$,  such that, for each initial condition $x_o$ satisfying  $|x_o|_\gamma \leq \delta$, the solution $\phi$ starting from $x_o$ satisfies  $ | \phi(t) |_\gamma \leq \varepsilon$ for all $t \geq T$.  

\item The orbit $\gamma$ is orbitally asymptotically stable,  if it is orbitally stable and attractive; {\em i.e.},  if there exists $R \in (0,+\infty]$ such that,  for each  $x_o$ satisfying  $|x_o|_\gamma \leq R$, the solution $\phi$ starting from $x_o$ satisfies   $\lim_{t \rightarrow \infty}  | \phi(t) |_\gamma  = 0$.

\item The orbit $\gamma$ is globally orbitally asymptotically stable if it is orbitally asymptotically stable with $R = + \infty$ and almost globally asymptotically stable if it is orbitally asymptotically stable for all $x_o \in \mathbb{R}^n\backslash \mathcal D$, where $\mathcal D \subset \mathbb{R}^n$ has a null Lebesgue measure. 
\end{itemize}
\end{definition}

Finally,   we recall some orbital stability criteria in terms of the so-called characteristics multipliers \cite[Section~III.7]{HALE_BOOK} which, for linear periodic systems, are the counterpart of eigenvalues for linear autonomous systems.  To see this,  we assume that $f$ is continuously differentiable and we consider the $\alpha$-periodic matrix $A(t) :=  \frac{\partial f}{\partial x^\top}(\phi(t))$,  
where $\phi(t)$ is the $\alpha$-periodic solution to \eqref{127bis} generating the orbit $\gamma$.   After Floquet theory---see {\it e.g.}, \cite{FLOQUET} and \cite{Perko}, there exist an $\alpha$-periodic non-singular matrix $P: [t_o,+\infty] \rightarrow \mathbb{R}^{n \times n}$ and a constant matrix $B \in \mathbb{R}^{n \times n}$ such that the transition matrix associated to the linear time-varying system 
\begin{align}  \label{charper}
\dot{x} = A(t) x 
\end{align}
is given by $	X(t) := P(t) e^{B t}$ and the non-singular change of coordinates $y := P(t)^- x$ transforms the linear system \eqref{charper} into $\dot{y} = B y$.

\begin{definition} [Characteristic multipliers]
The characteristic multipliers of the $\alpha$-periodic matrix $A(t)$ are the eigenvalues of the matrix $e^{B \alpha}$.  
\end{definition}

\begin{definition} [Non-singular periodic orbit]
The periodic orbit $\gamma$ generated by the periodic solution 
$\phi(t)$ is non-singular if the matrix $A(t) :=  \frac{\partial f}{\partial x^\top}(\phi(t))$ admits a simple characteristic multiplier equal to $1$.
\end{definition}

\begin{lemma}[Theorem 2.1, Section~VI.2.~\cite{HALE_BOOK}]  \label{lemfloq}
Consider  system \eqref{127bis} with $f$ continuously differentiable and let $\phi$ be a non-trivial $\alpha$-periodic solution generating the orbit $\gamma$.  Assume that the matrix $A(t) :=  \frac{\partial f}{\partial x^\top}(\phi(t))$ is non-singular and all the characteristic multipliers, except one, have modulus strictly less than $1$.  Then,  the resulting periodic orbit $\gamma$ is asymptotically orbitally stable.  
\end{lemma}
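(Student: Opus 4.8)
The plan is to recast orbital stability of $\gamma$ as asymptotic stability of a fixed point of a Poincar\'e return map, and to identify the linearization of that map with the ``transverse'' part of the monodromy matrix. Since $\phi$ is non-trivial, $f(p)=\dot\phi(0)\neq 0$ at $p:=\phi(0)$, so one may fix a codimension-one $C^1$ disc $\Sigma\ni p$ transverse to $f(p)$. By continuity of the flow $\varphi$ of \eqref{127bis} and the implicit function theorem, there is a neighbourhood $U$ of $p$ in $\Sigma$ and a $C^1$ first-return time $\tau:U\to\mathbb R_{>0}$ with $\tau(p)=\alpha$ such that the Poincar\'e map $\mathcal P(q):=\varphi(\tau(q),q)$ is a well-defined $C^1$ map $U\to\Sigma$ with $\mathcal P(p)=p$.

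The key step is to show that $\operatorname{spec}(D\mathcal P(p))$ consists exactly of the characteristic multipliers of $A(t):=\partial f/\partial x^\top(\phi(t))$ other than the simple multiplier $1$. The variational equation along $\phi$ is \eqref{charper}, with monodromy matrix $X(\alpha)=e^{B\alpha}$ whose eigenvalues are, by definition, the characteristic multipliers; since $\dot\phi$ solves \eqref{charper} and is $\alpha$-periodic, $f(p)$ is an eigenvector of $X(\alpha)$ for the eigenvalue $1$, which the non-singularity hypothesis declares simple. Writing $\mathbb R^n=\mathbb R\,f(p)\oplus W$ with $W$ the $X(\alpha)$-invariant complement, a direct differentiation of $q\mapsto\varphi(\tau(q),q)$ at $p$---in which the term $X(\alpha)[\,\cdot\,]+f(p)\,D\tau(p)[\,\cdot\,]$ arises, its $f(p)$-component being annihilated by the transversality of $\Sigma$---shows that $D\mathcal P(p)$ is conjugate to $X(\alpha)|_W$. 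By hypothesis all eigenvalues of $X(\alpha)|_W$ have modulus strictly less than $1$, so the spectral radius of $D\mathcal P(p)$ is strictly less than $1$.

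It then follows from the discrete-time linearization principle (pass to an adapted norm so that $\|D\mathcal P(p)\|<1$, then shrink $U$) that $p$ is locally exponentially stable for the iteration $\mathcal P$: there are $M\ge 1$, $r\in(0,1)$ with $|\mathcal P^k(q)-p|\le M r^k|q-p|$ for all $q\in U$, $k\in\mathbb N$. To conclude, take $x_o$ near $\gamma$; by compactness of $\gamma$ and continuous dependence the trajectory $t\mapsto\varphi(t,x_o)$ stays in a thin tube around $\gamma$ and meets $\Sigma$ at times $t_0<t_1<\cdots$ with $q_k:=\varphi(t_k,x_o)=\mathcal P^k(q_0)\to p$ and inter-crossing times $t_{k+1}-t_k=\tau(q_k)\to\alpha$, hence uniformly bounded. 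Since the flow is uniformly continuous on the compact time window $[0,\sup_k\tau(q_k)]$ and maps a small ball around $q_k$ onto a small neighbourhood of the corresponding arc of $\gamma$, one gets $\sup_{t\in[t_k,t_{k+1}]}|\varphi(t,x_o)|_\gamma\to0$, hence $|\varphi(t,x_o)|_\gamma\to0$ as $t\to\infty$; refining the initial smallness yields the $\varepsilon$--$\delta$ statement, i.e.\ $\gamma$ is asymptotically orbitally stable.

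I expect the main obstacle to be the second step: pinning down rigorously that $\operatorname{spec}(D\mathcal P(p))=\operatorname{spec}(X(\alpha))\setminus\{1\}$. This requires the transversality construction, the identification of $T_p\Sigma$ with the $X(\alpha)$-invariant complement $W$, and careful bookkeeping of how $D\tau(p)$ enters $D[\varphi(\tau(\cdot),\cdot)](p)$---the cancellation of the components along $f(p)$ is precisely what makes the argument work and is where simplicity of the characteristic multiplier $1$ is used. A secondary technical point is the discrete-to-continuous passage, where the trajectory must be controlled uniformly on the bounded inter-crossing intervals rather than only at the crossing instants. (An alternative route, closer to \cite{HALE_BOOK}, avoids the Poincar\'e map: one uses the Floquet decomposition $X(t)=P(t)e^{Bt}$ to split the perturbation equation $\dot y=A(t)y+\text{h.o.t.}$ into a one-dimensional neutral part along $\dot\phi$ and an exponentially contracting complementary part, and then solves a contraction-mapping problem to select the asymptotic phase; the contraction-of-the-transverse-monodromy hypothesis plays the same role there.)
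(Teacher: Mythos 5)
The paper does not prove this lemma at all: it is recalled verbatim as background, with the proof delegated to the cited reference (Theorem~2.1, Section~VI.2 of Hale's book), so there is no in-paper argument to compare against line by line. Your Poincar\'e-section proof is the standard alternative route and, as outlined, it is sound. The one genuinely delicate step is the one you flag yourself: identifying $\operatorname{spec}(D\mathcal P(p))$ with the characteristic multipliers of $X(\alpha)$ after removing one copy of the eigenvalue $1$. That identification is cleanest on the quotient $\mathbb R^n/\operatorname{span}\{f(p)\}$ (equivalently, projecting $X(\alpha)$ onto $T_p\Sigma$ along $f(p)$); your statement that $D\mathcal P(p)$ is conjugate to $X(\alpha)|_W$ is correct precisely because the multiplier $1$ is simple, which is what the paper's ``non-singular orbit'' hypothesis provides, and you invoke it at the right place. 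Two points to tighten in a full write-up: (i) for the $\varepsilon$--$\delta$ conclusion you must first argue that a solution starting anywhere in a thin tube around $\gamma$ (not only near $p$) reaches the domain $U$ of the return map---flow a point near $\phi(s)$ for time $\alpha-s$ and use continuous dependence---and only then start the iteration; (ii) the uniform control of the trajectory on the bounded inter-crossing intervals, which you do address. By contrast, Hale's own proof (the route you sketch parenthetically) uses moving coordinates along the orbit and the Floquet decomposition to split the perturbation dynamics into the neutral direction $\dot\phi$ and an exponentially contracting complement, then runs a contraction/integral-equation argument; it is heavier, but it delivers the stronger conclusion of asymptotic phase, which the lemma as stated---and its use inside the proof of Lemma~\ref{lem3}---does not require. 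So your proposal proves exactly what is needed, by a legitimately different and somewhat more elementary mechanism than the cited source.
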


\subsection*{Sufficient conditions for orbital stability}

As mentioned above, generally speaking, the standing assumption in this section is that the  reduced-order dynamics  \eqref{redsyst} admits an  orbitally asymptotically stable periodic solution. However, we remark that some nonlinear systems defined on compact and convex sets and that admit a limit cycle, \color{black}also admit at least one equilibrium point \cite{BASENER2006841}.  This imposes particular richness to the network's collective behavior and considerable difficulty to analyze it since it translates into  studying the stability of a disconnected invariant set.  In that light, we pose the following, more precise hypothesis.  

\begin{assumption} \label{ass4}
 The reduced-order dynamics  \eqref{redsyst} admits a unique compact invariant subset  $\omega \subset \mathbb{R}^n$
 that is  globally attractive;  namely,  for each $x_{m o} \in \mathbb{R}^n$, the solution $x_m(t)$ starting from $x_{m o}$ satisfies 
\begin{align} \label{eqlimsup}
\limsup_{t\to+\infty} |x_m (t)|_\omega = 0
\end{align} 
 
Furthermore,  the set $\omega$ is compose  a non-singular periodic orbit 
$\gamma_o$,  of period $\alpha_o$,  that is orbitally asymptotically stable and 

 \begin{itemize}
\item the origin $\{x_m=0\}$,  when the latter is repulsive. 

\item the  homoclinic orbit $\gamma_1 := W^u_o(0) \cap W^s_o(0)$  when the origin is hyperbolic\footnote{\textit{i.e.} the eigenvalues of $A:=\frac{ \partial f_m(x_m)}{ \partial  x_m}  \big|_{x_m = 0}$ has $0<k<n$ eigenvalues with positive real part and $n-k$ with  negative real part. },  where $W^s_o(0)$ and $W^u_o(0)$ are,  respectively,  the global stable and  unstable manifolds of the origin.
\end{itemize}
\end{assumption} 

\begin{remark}
Note that the global attractivity property in \eqref{eqlimsup} plus the structure of the invariant set $\omega$ imply the existence of a Lyapunov function enjoying useful properties along the solutions to \eqref{redsyst}; see \lemAE.
\end{remark}

\begin{lemma}   \label{lemalgstab}
Under Assumption \ref{ass4} and  Item (i) in Lemma \ref{fact1},  the periodic orbit  $\gamma_o$ is almost globally orbitally asymptotically stable  for \eqref{redsyst}.   
\end{lemma}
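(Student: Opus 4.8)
The plan is to deduce the almost-global orbital asymptotic stability of $\gamma_o$ from Assumption \ref{ass4} together with the structure of the invariant set $\omega$. Recall that, by hypothesis, $\omega$ is globally attractive and consists of $\gamma_o$ (a non-singular, orbitally asymptotically stable periodic orbit) plus either the repulsive origin or, in the hyperbolic case, the homoclinic loop $\gamma_1 = W^u_o(0)\cap W^s_o(0)$. The strategy is: (i) $\gamma_o$ is already orbitally asymptotically stable by assumption, so only its \emph{basin of attraction} needs to be controlled; (ii) show that the complement of this basin has zero Lebesgue measure; (iii) this complement is contained in the stable set of the ``other'' part of $\omega$ (the origin, or the origin together with $\gamma_1$), which must be a lower-dimensional set.

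First I would argue that, since $\omega$ is globally attractive and every solution therefore has its $\omega$-limit set contained in $\omega$, and since $\omega$ is the disjoint union of the invariant pieces $\gamma_o$ and (origin and possibly $\gamma_1$), a standard invariance/connectedness argument shows each solution's $\omega$-limit set lies entirely in one of these pieces. Hence $\mathbb{R}^n = \mathcal{B}(\gamma_o) \cup \mathcal{S}$, where $\mathcal{B}(\gamma_o)$ is the set of initial conditions converging to $\gamma_o$ and $\mathcal{S}$ is the set of those whose forward trajectory converges to $\{0\}$ or (in the hyperbolic case) to $\gamma_1$. Next I would show $\mathcal{S}$ has measure zero. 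In the repulsive-origin case, $\mathcal{S}=\{0\}$, which is trivially null. In the hyperbolic case, $\mathcal{S}$ is contained in the stable manifold $W^s_o(0)$: indeed a trajectory converging to $\gamma_1$ has $0$ in its $\omega$-limit set, and since $0$ is hyperbolic with a $k$-dimensional unstable manifold ($k\ge 1$), $W^s_o(0)$ is an injectively immersed submanifold of dimension $n-k<n$, hence Lebesgue-null; points converging to the origin itself also lie in $W^s_o(0)$. So $\mathcal{S}$ is null in all cases, giving $\mathcal{B}(\gamma_o) = \mathbb{R}^n\setminus\mathcal{S}$ with $\mathcal{S}$ null.

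It then remains to upgrade ``$x_{mo}\in\mathcal{B}(\gamma_o)$'' to genuine \emph{orbital} asymptotic stability, i.e. to confirm that for every such $x_{mo}$ one has $\lim_{t\to\infty}|x_m(t)|_{\gamma_o}=0$ and that $\gamma_o$ is orbitally stable; the latter is part of Assumption \ref{ass4}, and the former follows because convergence of the trajectory to $\gamma_o$ as a set is exactly the definition of $\mathcal{B}(\gamma_o)$. Combining orbital stability with attractivity from $\mathbb{R}^n\setminus\mathcal{S}$, $\mathcal{S}$ null, yields almost global orbital asymptotic stability by definition.

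The main obstacle I anticipate is the measure-zero claim in the hyperbolic case: one must justify carefully that the set of initial conditions whose $\omega$-limit set is the homoclinic orbit $\gamma_1$ is contained in a Lebesgue-null set. The cleanest route is to invoke the stable manifold theorem for the hyperbolic equilibrium $0$ to get that $W^s_o(0)$ is an immersed $C^1$ submanifold of dimension $n-k$, hence null, and then to check that any trajectory asymptotic to $\gamma_1$ (which contains $0$ in its closure and is forward-asymptotic to $0$ along the stable branch) must itself lie on $W^s_o(0)$ --- this uses that $\gamma_1\subset W^s_o(0)$ and invariance. A secondary subtlety is ensuring the dichotomy ``converge to $\gamma_o$ or to the rest of $\omega$'' is exhaustive: this relies on $\omega$-limit sets being nonempty (bounded trajectories, guaranteed by global attractivity of the compact set $\omega$), connected, and invariant, so they cannot straddle the disjoint components of $\omega$. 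I would also note that Item (i) of Lemma \ref{fact1} (continuous differentiability of $F_m$, hence local Lipschitzness) is what makes the flow well-defined and the stable-manifold machinery applicable.
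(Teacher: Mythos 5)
Your overall architecture matches the paper's: split trajectories according to which component of $\omega$ their (connected, invariant) $\omega$-limit set lies in, dispose of the repulsive-origin case trivially, and reduce the homoclinic case to showing that the exceptional set is contained in the global stable manifold $W^s_o(0)$, which is Lebesgue-null because it is an immersed submanifold of dimension $n-k<n$. However, there is a genuine gap at exactly the step you flagged as the main obstacle and then waved away. The claim that ``any trajectory asymptotic to $\gamma_1$ must itself lie on $W^s_o(0)$'' does \emph{not} follow from ``$\gamma_1\subset W^s_o(0)$ and invariance.'' Convergence to an invariant set does not imply membership in it (otherwise every trajectory converging to $\gamma_o$ would lie on $\gamma_o$). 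The prototypical danger is a trajectory that spirals onto the homoclinic loop, passing near the saddle infinitely often and going around again: its $\omega$-limit set is all of $\gamma_1$, yet it is not on the stable manifold of the origin, and for a planar saddle with a loop attracting from one side such trajectories fill an open (positive-measure) set. Ruling this scenario out under Assumption \ref{ass4} is precisely the nontrivial content of Lemma \ref{lemalgstab}, so it cannot be asserted; it must be proved.

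The paper's proof is devoted almost entirely to this point and argues by contradiction: if $x_{mo}\notin W^s_o(0)$ and the (bounded) solution converged to $\gamma_1$, then by Item 3 of Lemma \ref{lemstabmani} (bounded solutions entering the neighborhood $B_\gamma$ off $W^s(0)$ eventually satisfy $|x_m(t)|\geq r$ for all large $t$) the solution would in fact have to converge to the compact set $\gamma_1\backslash B_r$; but every point of $\gamma_1\backslash B_r$ flows into $B_{r/2}$ within a uniform time $T$ (compactness of $\gamma_1$ plus the definition of the homoclinic orbit), so continuous dependence on initial conditions over $[0,T]$ forces $x_m(t_i+T)$ back inside $B_r$, contradicting the eventual lower bound. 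Some quantitative mechanism of this kind---an eventual-repulsion property near the saddle combined with the fact that the loop itself is swept into any neighborhood of the origin in uniform time---is what your proposal is missing; without it, step (iii) of your plan (``$\mathcal S\subset W^s_o(0)$'') is unsupported. The remaining ingredients of your write-up (the dichotomy via connected $\omega$-limit sets, the repulsive-origin case, the null-measure property of $W^s_o(0)$, and combining attractivity with the orbital stability already contained in Assumption \ref{ass4}) are fine and consistent with the paper, which in fact leaves the measure-zero property of $W^s_o(0)$ implicit where you spell it out.
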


We are ready to present our main statement. 

\begin{theorem}[Almost global orbital asymptotic stability] \label{thm2}
Consider the network system \eqref{ntwksyst} under the Assumptions \ref{ass3}--\ref{ass2} and such that the reduced-order dynamics  \eqref{redsyst} satisfies Assumption \ref{ass4}. Then, there exists $\sigma^f>0$, such that, for all $ \sigma \geq \sigma^f $, the networked system \eqref{ntwksyst} admits a unique nontrivial periodic orbit $\mathcal O_{1/\sigma}$, of period $\alpha_{1/\sigma}$ and  that is almost globally orbitally asymptotically stable.  Moreover,  $\mathcal O_{1/\sigma} \rightarrow \mathcal O_o$, where
\begin{equation*}
 \mathcal O_o := \{ x \in \mathbb{R}^{nN} : x_m \in \gamma_o ~\text{and} ~ e_v=0\}
\end{equation*}
 ---see \eqref{282}, and $\alpha_{1/\sigma} \to \alpha_o$ as $\sigma \rightarrow \infty$.
\end{theorem}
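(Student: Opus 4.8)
The plan is to treat \eqref{singpertsyst} as a singular perturbation of the reduced-order system \eqref{redsyst} and to transfer the structure established in Assumption \ref{ass4} and Lemma \ref{lemalgstab} from the slow dynamics to the full network. First I would invoke Lemma \ref{fact1}: the functions $F_m$, $G_m$, $G_e$ are $C^1$, $\Lambda$ is Hurwitz, and solutions are globally uniformly (in $\sigma$) ultimately bounded. The boundary-layer system $\frac{d e_v}{d\tau} = -(\Lambda\otimes I_n)e_v$ has the origin $e_v=0$ as a globally exponentially stable equilibrium, uniformly in $x_m$, and the slow manifold is exactly $\{e_v=0\}$ with reduced dynamics \eqref{redsyst}. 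On this manifold, Assumption \ref{ass4} and Lemma \ref{lemalgstab} give a non-singular, orbitally asymptotically stable periodic orbit $\gamma_o$ of period $\alpha_o$, together with the repulsive/hyperbolic origin (and, in the hyperbolic case, the homoclinic orbit $\gamma_1$); the global attractivity \eqref{eqlimsup} plus this structure yields, via the remark and \lemAE, a Lyapunov function for \eqref{redsyst}.

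Next I would establish \emph{existence and uniqueness} of a nearby periodic orbit $\mathcal O_{1/\sigma}$ of \eqref{singpertsyst} for $\varepsilon = 1/\sigma$ small. The non-singularity hypothesis on $\gamma_o$ (simple characteristic multiplier equal to $1$) is precisely what makes the periodic orbit robust: the associated Poincar\'e map of the reduced system has $1$ as a non-critical fixed point with the remaining multipliers off the unit circle, so by an implicit-function-theorem / persistence-of-normally-hyperbolic-invariant-manifolds argument (Fenichel / Hale, or the refinements of \cite{anosov1960limit} the authors advertise for disconnected invariant sets of equilibria and orbits), for $\varepsilon$ small there is a unique periodic orbit $\mathcal O_{1/\sigma}$ of \eqref{singpertsyst} in a tube around $\mathcal O_o$, with period $\alpha_{1/\sigma} \to \alpha_o$ and $\mathcal O_{1/\sigma} \to \mathcal O_o$ in the Hausdorff sense as $\varepsilon \to 0$; its non-trivial characteristic multipliers are $\varepsilon$-perturbations of those of $\gamma_o$ (together with the strongly contracting fast multipliers $\approx e^{\sigma \alpha_o \lambda(\Lambda)}$), hence all but one lie strictly inside the unit disk, so by Lemma \ref{lemfloq} $\mathcal O_{1/\sigma}$ is orbitally asymptotically stable. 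For this step one must also certify that no \emph{other} periodic orbit persists: the only other candidate fixed set of the reduced flow is the origin (repulsive or saddle-type) and, in the hyperbolic case, the homoclinic loop $\gamma_1$, and I would argue that neither spawns a periodic orbit of the perturbed system — the origin because its perturbed counterpart remains an equilibrium (Lemma \ref{fact1}(ii)) of the same instability type, and the homoclinic loop because, under the singular perturbation, it cannot close up into a limit cycle without violating the uniqueness of $\omega$ in the $\varepsilon\to 0$ limit.

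Finally I would establish \emph{almost-global} orbital attractivity of $\mathcal O_{1/\sigma}$. Here the strategy is the composite-Lyapunov / reduction argument: combine the Lyapunov function $V_m$ for the slow system (from \lemAE, which decreases away from $\omega$ and certifies that $\gamma_o$ attracts all of $\mathbb R^n$ minus the zero-measure stable set $\mathcal D_0$ of the origin) with the quadratic $V_e(e_v) = e_v^\top(P\otimes I_n)e_v$ for the fast variable, exactly as in the proof of Theorem \ref{thm:gas}; for $\sigma$ large enough the cross terms are dominated and one gets that every solution starting outside a thin neighborhood of the unstable equilibrium's stable manifold enters and remains in a small tube around $\mathcal O_o$, hence (by the orbital stability of $\mathcal O_{1/\sigma}$ just proved, and a standard argument ruling out other $\omega$-limit sets inside the tube — no equilibria there, only $\mathcal O_{1/\sigma}$) converges to $\mathcal O_{1/\sigma}$. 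The exceptional set $\mathcal D_\sigma$ of non-convergent initial conditions is the stable manifold of the perturbed unstable equilibrium, which is a lower-dimensional $C^1$ submanifold and thus Lebesgue-null. The statements $\mathcal O_{1/\sigma}\to\mathcal O_o$ and $\alpha_{1/\sigma}\to\alpha_o$ as $\sigma\to\infty$ then follow from the continuity of the persistence construction in $\varepsilon$.

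\textbf{Main obstacle.} The hard part is handling the \emph{disconnected} invariant set $\omega$ under the singular perturbation: classical Fenichel / Hosoe-type results are stated for a single hyperbolic periodic orbit, whereas here $\gamma_o$ coexists with an unstable equilibrium and, in the hyperbolic case, a homoclinic orbit. One must show that the perturbation does not create spurious periodic orbits near the origin or near $\gamma_1$, and that the global basin structure (full measure for $\gamma_o$, null set for the rest) survives — this is exactly why the authors announce original refinements of \cite{anosov1960limit} for invariant sets composed of equilibria and limit cycles, and the proof will lean on those refinements (presumably the lemmas \lemregpert, \lemcontin, \lemanal, \lemstabmani referenced in the preamble).
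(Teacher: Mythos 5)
Your overall skeleton is the same as the paper's: Anosov-type persistence (the paper's Lemma \ref{lem2}, via Lemma \ref{thmAnosov}) for existence and uniqueness of $\Gamma_\varepsilon$ in a tube around $\Gamma_o$, a Floquet/eigenvalue-perturbation argument for its orbital stability (Lemma \ref{lem3}), practical attractivity of the disconnected set $\Gamma_o\cup\Gamma_1$ via the Angeli--Efimov Lyapunov function (Lemma \ref{lem1}), and a null-measure exceptional set tied to the stable manifold of the origin. However, there is a genuine gap in your almost-global attractivity step. The composite Lyapunov argument you propose (``exactly as in the proof of Theorem \ref{thm:gas}'', using the function from \lemAE) cannot do what you claim: that Lyapunov function measures the distance to the \emph{whole} disconnected set $\omega=\gamma_o\cup\gamma_1$, so the argument only delivers global \emph{practical} convergence to $\Gamma_o\cup\Gamma_1$ --- which is exactly the paper's Lemma \ref{lem1} --- and it cannot separate the basin of the cycle from the set of solutions lingering near $\Gamma_1$. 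The paper closes this gap with a dedicated local analysis at the origin (Lemma \ref{lem6}): for small $\varepsilon$ the origin of \eqref{singpertsyst} is hyperbolic and its local stable/unstable manifolds are defined on a neighborhood $U$ \emph{whose size does not shrink with} $\varepsilon$ (this uniformity is nontrivial, obtained through the Lidskii and invariant-subspace perturbation lemmas, and is absent from your plan), followed by a trap-and-expel contradiction: any solution converging to a $\rho^*$-neighborhood of $\Gamma_1$ must enter the ball $B_r$; if it does not start in the backward propagation $R(W^s_\varepsilon(0))$ it is expelled from $B_r$ for all future times, yet proximity to $\Gamma_1$ forces it back into $B_r$ --- a contradiction. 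This same argument is what excludes spurious periodic or recurrent motion near $\Gamma_1$; your heuristic that a perturbed homoclinic loop ``cannot close up into a limit cycle without violating the uniqueness of $\omega$'' is not a valid argument, since homoclinic connections generically do bifurcate into limit cycles under perturbation, and uniqueness of $\omega$ for the \emph{reduced} system says nothing about the perturbed one.

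A second, smaller gap: you dismiss the exceptional set as ``the stable manifold of the perturbed unstable equilibrium, a lower-dimensional $C^1$ submanifold, thus Lebesgue-null.'' The stable manifold theorem only gives this locally (as a graph over the stable subspace on $U$); the set you must exclude is its global backward saturation under the flow, and the paper shows this is null by combining the local graph property with the fact that the time-$\tau$ flow map is a homeomorphism, so it cannot map a set of positive measure into a null set (Lemma \ref{clcl1}). Your plan correctly flags the disconnectedness of $\omega$ as the main obstacle, but the mechanism you offer to resolve it (composite Lyapunov separation plus the $\omega$-uniqueness heuristic) would not go through; the uniform-in-$\varepsilon$ stable-manifold analysis and the expulsion contradiction are the missing ingredients.
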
 

The proof of Theorem \ref{thm2},  which is provided farther below,  follows a sequence of logical steps to assess the existence,  uniqueness,  and almost global orbital asymptotic stability  of an orbit for \eqref{ntwksyst}.  The analysis relies on studying the singularly-perturbed system \eqref{singpertsyst},  but we emphasize that the available literature on stability (of the origin or a compact set) for singularly-perturbed systems \cite{tikhonov},  \cite{kokotovic1976singular}, \cite{KHALIL96} does not apply to \eqref{singpertsyst},  when \eqref{redsyst} admits a limit cycle and an isolated equilibrium point.   Therefore,  the proof of Theorem \ref{thm2}  relies on technical  lemmata that are presented next,  but,  for clarity of exposition, the proofs of these lemmata are given in the Appendix.  

\begin{itemize}
\item Lemma \ref{lem1} establishes global asymptotic practical stability of the 
$$  \{ (x_m, e_v) \in \mathbb{R}^{nN} : x_m \in \gamma_o \cup \gamma_1 ~\text{and} ~ e_v=0\};  $$

\item Lemma \ref{lem2}  establishes that,  given a torus  sufficiently tight around $\mathcal{O}_o$,  for each coupling gain $\sigma$ sufficiently large,   system \eqref{redsyst} admits a unique periodic orbit $\mathcal{O}_{1/\sigma}$ contained in the torus;

\item  Lemma  \ref{lem3} establishes that each such orbit 
$\mathcal{O}_{1/\sigma}$ is (locally) asymptotically stable and admits the aforementioned torus as a basin of attraction;

\item Lemmata \ref{lem6} and \ref{clcl1} (in the Appendix)  provide  a local analysis around the origin,   to establish that it attracts only the solutions starting from a null-measure set.  
\end{itemize}

\subsection*{Technical Lemmata}

We  start by introducing the following notations.  Correspondingly to $\gamma_o \subset \mathbb{R}^n$ and $\gamma_1 \subset \mathbb{R}^n$,  which denote, respectively,  the closed periodic and homoclinic orbits for system \eqref{redsyst}---see Assumption \ref{ass4},  we introduce their  ``lifting'' 
 $\Gamma_o \subset \mathbb{R}^{nN}$ and  $\Gamma_1 \subset \mathbb{R}^{nN}$ in the space of  system \eqref{singpertsyst},  as 
\begin{align} 
   \Gamma_o : =\left\{(x_m,e_v)\in \mathbb{R}^{nN}: x_m\in\gamma_o ~ \text{and} ~ e_v=0 \right\},   \label{Gamma-o}
   \\
   \Gamma_1 := \left\{(x_m,e_v)\in \mathbb{R}^{nN}: x_m \in \gamma_1 ~ \text{and} ~ e_v=0 \right\}.  \label{Gamma-1}
\end{align}
 Furthermore, we denote by $T_\rho$ the torus defined as 
\begin{equation}  \label{trho}  
T_{\rho} :=  \left\{ \left(e_v, x_m\right) \in \mathbb{R}^{N(n-1)} \times \mathbb{R}^n \,:\, |(x_m,e_v)|_{\Gamma_o} \leq \rho \right\},
\end{equation}
and we use $\Gamma_\varepsilon \subset \mathbb{R}^{nN}$ to denote a closed orbit generated by a periodic solution to  system \eqref{singpertsyst},  {\it if it exists}. That is, $\Gamma_\varepsilon$ is a subset in the space of $(x_m,e_v)$ that consists in the image points generated by the parameterized solutions of \eqref{singpertsyst},  $t\mapsto (x_m(t),e_v(t))$,   that are  periodic with period $\alpha_\varepsilon$.

The first technical lemma provides a statement for system \eqref{singpertsyst} on   global practical asymptotic (GApS) stability of the set 
$$ \Omega := \Gamma_o \cup  \Gamma_1, $$
where $\Gamma_o$ and $\Gamma_1$ are introduced in \eqref{Gamma-o}-\eqref{Gamma-1}.

\begin{lemma} [GApS of $\Omega$]  \label{lem1}
Consider system \eqref{singpertsyst} such that Items (i)--(iii) of Lemma \ref{fact1} hold and let Assumption \ref{ass4} be satisfied for the corresponding reduced-order system \eqref{redsyst}.   
Then,   the set $\Omega := \Gamma_o \cup \Gamma_1$ is GApS for \eqref{singpertsyst}. In particular,   for any $\rho>0$,  there exists $\varepsilon_1(\rho)>0$,  such that,  for each $\varepsilon \leq \varepsilon_1(\rho)$ and for each initial condition $x_o  = (e_{vo}, z_{mo}) \in \mathbb{R}^{nN} $, the solution $(x_m(t),e_v(t))$ satisfies 
$\lim_{t \rightarrow \infty} \left| x_m(t), e_v(t) \right|_\Omega \leq \rho$.
\end{lemma}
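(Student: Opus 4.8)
The plan is to establish the practical-attractivity statement by a composite-Lyapunov-function argument tailored to the singularly-perturbed structure of \eqref{singpertsyst}, localising everything on a compact set via ultimate boundedness. First I would use Item (iii) of Lemma \ref{fact1}: every solution of \eqref{singpertsyst} enters, in finite time and uniformly in $\sigma\geq\sigma^\ast$, the compact ball $B_r$. Hence it suffices to show that, for each $\rho>0$, there is $\varepsilon_1(\rho)>0$ such that for all $\varepsilon\leq\varepsilon_1(\rho)$ every solution confined to $B_r$ satisfies $\limsup_{t\to\infty}|(x_m(t),e_v(t))|_\Omega\leq\rho$. On $B_r$, continuous differentiability of $F_m$, $G_m$, $G_e$ (Item (i) of Lemma \ref{fact1}) together with \eqref{ineqGm} furnishes uniform bounds $|G_m(x_m,e_v)|\leq d_r|e_v|$ and $|G_e(x_m,e_v)|\leq d_r$ for some $d_r>0$.

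Next I would assemble two Lyapunov ingredients. For the fast (boundary-layer) subsystem, since $\Lambda$ is Hurwitz (Item (iv) of Lemma \ref{fact1}), choose $P=P^\top>0$ with $P\Lambda+\Lambda^\top P\leq -I_{N-1}$ and set $V_e(e_v):=e_v^\top(P\otimes I_n)e_v$. For the slow dynamics \eqref{redsyst}, I would invoke \lemAE\ --- whose hypotheses hold because, by Assumption \ref{ass4}, the compact invariant set $\omega$ is globally attractive and is composed of the orbitally asymptotically stable orbit $\gamma_o$ and the (repulsive or hyperbolic) origin --- to produce a proper function $W$, positive definite relative to $\omega$, whose derivative along \eqref{redsyst} encodes the convergence to $\omega$, say $\tfrac{\partial W}{\partial x_m^\top}F_m(x_m)\leq-\alpha_3(|x_m|_\omega)$ up to terms that are harmless on $B_r$. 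The composite candidate is $\nu(x_m,e_v):=W(x_m)+\lambda V_e(e_v)$ with $\lambda>0$ to be fixed.

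I would then estimate $\dot\nu$ along \eqref{singpertsyst}. The $W$-part yields $\tfrac{\partial W}{\partial x_m^\top}F_m$ plus $\tfrac{\partial W}{\partial x_m^\top}G_m$, the latter bounded on $B_r$ by $c_r d_r|e_v|$; the $V_e$-part yields $-\tfrac{\lambda}{\varepsilon}|e_v|^2+2\lambda\,e_v^\top(P\otimes I_n)G_e$, with the cross term bounded on $B_r$ by $2\lambda|P|d_r|e_v|$. Completing the square on the $|e_v|$-terms against $\tfrac{\lambda}{2\varepsilon}|e_v|^2$ leaves an $O(\varepsilon)$ residual, so that on $B_r$
\[
\dot\nu(x_m,e_v)\ \leq\ -\alpha_3(|x_m|_\omega)\ -\ \tfrac{\lambda}{2\varepsilon}|e_v|^2\ +\ \varepsilon\,C_r,
\]
with $C_r>0$ independent of $\varepsilon$. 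Since $|(x_m,e_v)|_\Omega\geq\rho$ forces $|x_m|_\omega\geq\rho/\sqrt2$ or $|e_v|\geq\rho/\sqrt2$, in the first case $\dot\nu\leq-\tfrac12\alpha_3(\rho/\sqrt2)$ and in the second $\dot\nu\leq-\tfrac{\lambda\rho^2}{8\varepsilon}$, both strictly negative once $\varepsilon\leq\varepsilon_1(\rho)$ for a suitably small $\varepsilon_1(\rho)\leq 1/\sigma^\ast$. A standard sub-level-set argument --- using properness and positive definiteness of $\nu$ relative to $\Omega$ --- then shows that every solution confined to $B_r$ enters, and thereafter remains in, the $\rho$-neighbourhood of $\Omega$; combined with the ultimate boundedness this gives global practical attractivity, and global practical stability follows from the same estimate with the residual set shrinking as $\varepsilon\to0$.

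The step I expect to be the main obstacle is precisely the interface with the slow dynamics: because $\omega$, and hence $\Omega$, contains the unstable origin, $\Omega$ is globally attractive but \emph{not} Lyapunov stable, so the classical singular-perturbation theorems for stability of sets (e.g.\ \cite{KHALIL96}) do not apply and no ordinary Lyapunov function can be positive definite relative to $\omega$ with a negative-definite derivative. The delicate point is to extract from \lemAE\ a slow Lyapunov-type function $W$ whose composition with $V_e$ still yields the strict decrease of $\nu$ outside arbitrarily small neighbourhoods of $\Omega$, while dominating the interconnection terms $G_m$, $G_e$ and the $O(\varepsilon)$ residual --- which is exactly where the growth bound \eqref{ineqGm} and the confinement to $B_r$ are indispensable.
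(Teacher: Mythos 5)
Your ingredients are the same as the paper's (ultimate boundedness to restrict to $B_r$, the quadratic $V_e=e_v^\top(P\otimes I_n)e_v$ with $P\Lambda+\Lambda^\top P\le -I$, the bounds $|G_m|\le d_r|e_v|$ and $|G_e|\le d_r$ on $B_r$, and the Lyapunov function for $\omega$ supplied by \lemAE), and your differential inequality $\dot\nu\le-\alpha_3(|x_m|_\omega)-\tfrac{\lambda}{2\varepsilon}|e_v|^2+\varepsilon C_r$ on $B_r$ is legitimate. The gap is in the concluding step. The function $W$ given by \lemAE\ satisfies only $\underline{\alpha}(|x_m|_\omega)\le W(x_m)\le\bar{\alpha}(|x_m|_\omega+c)$ with an offset $c\ge 0$ that here cannot be removed: since $\omega$ contains the repulsive origin, the set $\Omega$ is globally attractive but \emph{not} Lyapunov stable, so no function that is positive definite with respect to $\Omega$ can have a nonincreasing derivative. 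Hence your appeal to ``properness and positive definiteness of $\nu$ relative to $\Omega$'' is unavailable, and the sub-level-set argument does not close: the smallest sub-level set of $\nu$ containing the $\rho$-neighbourhood of $\Omega$ has level at least of order $\bar{\alpha}(c)$, so trapping the solution in it yields only an $O(1)$ ultimate bound on $|\bar x|_{\Omega}$, not one that shrinks with $\rho$. Worse, the statement you aim to prove --- that every solution ``enters, and thereafter remains in, the $\rho$-neighbourhood of $\Omega$'' --- is actually false for this system: a solution starting near the lifted origin (hence at distance $\ll\rho$ from $\Omega$) and off the stable manifold later makes an $O(1)$ excursion away from $\Omega$ while transiting to the periodic orbit (in the Andronov--Hopf case the distance to $\omega$ reaches about $\sqrt{\mu_{mR}}/2$). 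Only the $\limsup$ bound of the lemma is true, and it cannot be obtained by forward invariance of small neighbourhoods.

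The paper avoids this by decoupling the two time scales sequentially rather than summing the Lyapunov functions. First, using $V_e$ alone together with \eqref{eq:born1}--\eqref{eq:born2} and ultimate boundedness, it derives a fast estimate $\limsup_{t\to\infty}|e_v(t)|\le\rho_v$ with an explicit class-$\mathcal K$ relation $\varepsilon\le\varepsilon_v(\rho_v)$. Second, it treats \eqref{A8} as the reduced dynamics \eqref{redsyst} perturbed by the input $G_m$, which is eventually bounded by $c_r\rho_v$; after verifying the no-cycle/filtration hypotheses for the decomposition $\omega=\gamma_o\cup\gamma_1$ (a short argument you also skip: $R(\gamma_o)=\gamma_o$ and $\gamma_o\cap\gamma_1=\emptyset$ exclude $\gamma_1<\gamma_o$ and any $1$-cycle), it uses the dissipation inequality of \lemAE\ together with a comparison argument to obtain a $\limsup$ bound on $|x_m(t)|_\omega$ that is small with $\rho_v$ --- crucially an asymptotic-gain-type estimate, never an invariance-of-neighbourhoods claim. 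The two limits are then added via $|(x_m,e_v)|_\Omega\le|e_v|+|x_m|_\omega$ and $\varepsilon_1(\rho)$ is built from $\varepsilon_v$. To repair your proof you should restructure it along these lines (or otherwise extract the asymptotic-gain property directly from \lemAE), rather than trying to trap solutions in sub-level sets of a function that cannot be positive definite with respect to $\Omega$.
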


The next lemma establishes that,  for all sufficiently small values of $\rho>0$,   there exist  sufficiently small values of $\varepsilon$,  such that there  exists a unique periodic orbit $\Gamma_\varepsilon \subset T_{\rho}$ generated by a solution to \eqref{singpertsyst} of period  $\alpha_\varepsilon \approx\alpha_o$.   

\begin{lemma}[Existence of $\Gamma_\varepsilon$] \label{lem2}
Consider system  \eqref{singpertsyst} such that  Items (i) and (iii) of Lemma \ref{fact1} hold and let Assumption \ref{ass4} hold for the reduced-order dynamics  \eqref{redsyst}.   Then,  there exist $\rho_o > 0$ and a class $\mathcal{K}$ function $\varepsilon_o$ such that,  for each $\rho \in (0,\rho_o]$ and for each 
$\varepsilon \leq \varepsilon_o(\rho)$,   system \eqref{singpertsyst}  has a unique nontrivial periodic orbit $\Gamma_\varepsilon$, which is strictly contained in $T_\rho$.   Moreover, the period $\alpha_\varepsilon$ of the solution to \eqref{singpertsyst} generating the orbit $\Gamma_\varepsilon$  tends to $\alpha_o$,  which is  the period of the solution to  \eqref{redsyst} generating   the orbit $\Gamma_o$. 
\end{lemma}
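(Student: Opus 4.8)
The plan is to reduce \eqref{singpertsyst}, near the lifted orbit $\Gamma_o$, to a slow flow on an attractive invariant manifold, and then to invoke persistence of the limit cycle $\gamma_o$ under a small $C^1$ perturbation; the lifted perturbed cycle will be $\Gamma_\varepsilon$. First, because $\Lambda$ is Hurwitz (Item~(iv) of Lemma~\ref{fact1}), the layer equation $\tfrac{d}{d\tau}e_v=-(\Lambda\otimes I_n)e_v$ of \eqref{singpertsyst} (in the fast time $\tau=t/\varepsilon$) has $\{e_v=0\}$ as a normally hyperbolic, attracting critical manifold. Restricting attention to a compact tubular neighbourhood $\mathcal U\subset\mathbb R^n$ of $\gamma_o$ in the $x_m$-coordinate --- legitimate since, by Item~(iii) of Lemma~\ref{fact1}, all trajectories eventually enter $B_r$, and since Assumption~\ref{ass4} keeps $\gamma_o$ at positive distance from $\{x_m=0\}$ and from $\gamma_1$ --- I would invoke a Tikhonov/Fenichel-type reduction (along the lines of \thmtykho) to produce $\varepsilon^\ast>0$ and a $C^1$ map $(x_m,\varepsilon)\mapsto\psi(x_m,\varepsilon)$ with $\psi(x_m,0)=(\Lambda\otimes I_n)^{-1}G_e(x_m,0)$ such that $\mathcal M_\varepsilon:=\{(x_m,e_v):x_m\in\mathcal U,\ e_v=\varepsilon\psi(x_m,\varepsilon)\}$ is locally invariant for \eqref{singpertsyst} and exponentially attractive, with a basin containing a torus of fixed radius about $\Gamma_o$ for all $\varepsilon\le\varepsilon^\ast$. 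The near-identity change of variables $\eta:=e_v-\varepsilon\psi(x_m,\varepsilon)$ straightens $\mathcal M_\varepsilon$ to $\{\eta=0\}$ and yields there the reduced vector field $\dot x_m=F_m(x_m)+\varepsilon R(x_m,\varepsilon)$, with $R$ continuous and $C^1$ in $x_m$ on $\mathcal U\times[0,\varepsilon^\ast]$ (using $G_m(x_m,0)=0$ and \eqref{ineqGm}, so that $G_m(x_m,\varepsilon\psi)=\varepsilon R$).

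Next I would establish persistence of $\gamma_o$. Fixing a hyperplane $\Sigma$ transverse to $\gamma_o$ at $p_0$ with $\gamma_o\cap\Sigma=\{p_0\}$, non-singularity of $\gamma_o$ (the characteristic multiplier $1$ being simple) says precisely that $I-DP_0(p_0)$ is invertible on $T_{p_0}\Sigma$, where $P_0$ is the Poincar\'e return map of $\dot x_m=F_m(x_m)$. For small $\varepsilon\le\varepsilon^\ast$ the return map $P_\varepsilon$ of $F_m+\varepsilon R$ is well defined and $C^1$ near $p_0$ and converges to $P_0$; the implicit function theorem (with $\varepsilon$ as parameter) then gives a unique fixed point $p_\varepsilon\to p_0$, hence a unique periodic orbit $\gamma_\varepsilon$ of the reduced flow in a fixed small tube about $\gamma_o$, with first-return time --- and therefore period --- $\alpha_\varepsilon\to\alpha_o$. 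Shrinking the tube and using that $\gamma_o$ is orbitally asymptotically stable (so that some neighbourhood of $\gamma_o$ contains no periodic orbit other than $\gamma_o$), I would conclude that $\gamma_\varepsilon$ is the only periodic orbit of $F_m+\varepsilon R$ inside the $x_m$-projection of $T_\rho$ once $\rho\le\rho_o$.

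Finally, lifting and uniqueness in $T_\rho$: the set $\Gamma_\varepsilon:=\{(x_m,\varepsilon\psi(x_m,\varepsilon)):x_m\in\gamma_\varepsilon\}$ is a nontrivial periodic orbit of \eqref{singpertsyst} of period $\alpha_\varepsilon$; since its $e_v$-component is $O(\varepsilon)$ and its $x_m$-component lies within $O(\varepsilon)$ of $\gamma_o$, there is $c>0$ with $|(x_m,e_v)|_{\Gamma_o}\le c\varepsilon$ along $\Gamma_\varepsilon$, so $\Gamma_\varepsilon$ is strictly contained in $T_\rho$ whenever $\varepsilon\le\rho/c$. Conversely, any periodic orbit of \eqref{singpertsyst} contained in $T_\rho$ must lie on $\mathcal M_\varepsilon$: in the straightened coordinates its $\eta$-component is exponentially contracted while the trajectory stays in $T_\rho$, so a periodic such trajectory has $\eta\equiv0$; its $x_m$-projection is then a periodic orbit of $F_m+\varepsilon R$ inside the $x_m$-projection of $T_\rho$, which by the previous step is $\gamma_\varepsilon$. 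Hence $\Gamma_\varepsilon$ is the unique nontrivial periodic orbit of \eqref{singpertsyst} in $T_\rho$, the threshold $\varepsilon_o(\rho):=\min\{\varepsilon^\ast,\rho/c\}$ is of class $\mathcal K$ on $(0,\rho_o]$, and $\alpha_\varepsilon\to\alpha_o$ as already shown.

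The main obstacle is the slow-manifold step: making the formal computation rigorous and uniform in $\varepsilon$ under only the available hypotheses ($C^1$ data and \emph{global} ultimate boundedness, rather than an a priori compact invariant domain), and --- tied to it --- showing cleanly that every periodic orbit inside $T_\rho$ genuinely lies on $\mathcal M_\varepsilon$, which requires the fast ($\eta$) dynamics to be controlled on the whole torus, including near $\partial\mathcal U$, uniformly as $\varepsilon\to0$. A secondary point is that the persistence argument needs the transverse characteristic multipliers of $\gamma_o$ to stay away from $1$ (supplied by non-singularity) and, for the ``no other periodic orbit'' conclusion, effectively off the whole unit circle; should ``orbital asymptotic stability'' not be taken to entail this, one would impose the Floquet-type hypothesis of Lemma~\ref{lemfloq} on $\gamma_o$. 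The limit-cycle persistence of the second step and the lifting of the third are otherwise routine.
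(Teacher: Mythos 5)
Your route is genuinely different from the paper's. The paper proves this lemma in one short paragraph by checking the four hypotheses of Anosov's theorem, stated in Appendix I as Lemma \ref{thmAnosov}: smoothness of $F_m$, $G_m$, $G_e$ (Item (i) of Lemma \ref{fact1} / Assumption \ref{ass1}); the unique quasi-steady state $e_v=h(x_m)\equiv 0$ of the fast equation at $\varepsilon=0$; hyperbolicity of the boundary layer, which here is the linear system $\dot e_v=-(\Lambda\otimes I_n)e_v$ with $\Lambda$ Hurwitz; and non-singularity of $\gamma_o$ from Assumption \ref{ass4}. That theorem delivers, verbatim, existence, uniqueness of $\Gamma_\varepsilon$ in the $\rho$-neighborhood of $\Gamma_o$, the class-$\mathcal K$ threshold $\varepsilon_o(\rho)$, and $\alpha_\varepsilon\to\alpha_o$. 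You instead re-derive the persistence result from scratch: Fenichel-type slow manifold $e_v=\varepsilon\psi(x_m,\varepsilon)$ over a compact tube around $\gamma_o$, reduction to $\dot x_m=F_m(x_m)+\varepsilon R(x_m,\varepsilon)$, Poincar\'e-map/implicit-function-theorem continuation of the cycle, and a contraction argument forcing any periodic orbit in $T_\rho$ onto the slow manifold. What your route buys is an explicit $O(\varepsilon)$ localization of $\Gamma_\varepsilon$ (hence a concrete $\varepsilon_o(\rho)=\min\{\varepsilon^\ast,\rho/c\}$) and a self-contained mechanism; what the paper's route buys is brevity and immunity to the two technical issues you yourself flag.

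Those two issues are real if you insist on your route with only the paper's toolkit. First, Lemma \ref{thmtykho} (Tikhonov) gives only finite-time trajectory approximation, not a locally invariant manifold, so the step producing $\mathcal M_\varepsilon$ cannot be done ``along the lines of \thmtykho''; you would have to import Fenichel's invariant-manifold theorem (which does apply here: $C^1$ data, compact critical manifold with boundary, $\Lambda$ Hurwitz), i.e., a tool the paper never needs. Second, the uniqueness claim inside $T_\rho$ is not closed by your argument: the implicit function theorem gives a unique fixed point of $P_\varepsilon$ near $p_0$, but a periodic orbit of the reduced flow lying in the tube could wind $k\ge 2$ times and correspond to a fixed point of $P_\varepsilon^k$ only. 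Excluding these requires the nontrivial multipliers of $\gamma_o$ to be off the unit circle, and Assumption \ref{ass4} (non-singular plus orbitally asymptotically stable) does not guarantee that---asymptotic orbital stability of $\gamma_o$ for the unperturbed flow rules out other cycles of $F_m$ nearby, but not cycles of $F_m+\varepsilon R$ bifurcating under the perturbation. Your fallback of strengthening the hypothesis to that of Lemma \ref{lemfloq} would change the assumption of the lemma, whereas the paper obtains the stated uniqueness directly from Lemma \ref{thmAnosov} under non-singularity alone. So: correct overall strategy and honestly flagged caveats, but as written the proof is not complete without Fenichel's theorem and a sharper uniqueness argument; the intended proof is simply the verification of the hypotheses of Lemma \ref{thmAnosov}.
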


\begin{remark}
The existence result in Lemma \ref{lem2} follows from a direct application of Anosov Theorem---see Lemma \ref{thmAnosov} in the Appendix.
\end{remark}

The next lemma establishes local asymptotic orbital stability of all periodic orbits $\Gamma_\varepsilon$ lying inside the torus $T_\rho$ for sufficiently small values of $\varepsilon$ and $\rho$. Moreover, we show that the corresponding domain of attraction is uniform in $\varepsilon$.   

\begin{lemma}[Stability of $\Gamma_\varepsilon$] \label{lem3}      
Let system \eqref{singpertsyst} satisfy  Items (i) and (iii) of Lemma \ref{fact1} and let Assumption \ref{ass4} be satisfied for the corresponding reduced-order dynamics \eqref{redsyst}.  
Then,  there exist $\varepsilon^{**} > 0$ and $\rho^{**}>0$ such that,  for each $\varepsilon \leq \varepsilon^{**}$,  each periodic orbit $\Gamma_\varepsilon \subset T_{\rho^{**}}$ generated by an  
$\alpha_\varepsilon$-periodic solution to \eqref{singpertsyst}, with $\alpha_\varepsilon$ sufficiently close to $\alpha_o$,  is asymptotically orbitally stable with a domain of attraction that contains $T_{\rho^{**}}$.
\end{lemma}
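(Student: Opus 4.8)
\medskip
\noindent\textbf{Proof strategy.}
The plan is in two stages: first establish \emph{local} orbital asymptotic stability of $\Gamma_\varepsilon$ through a Floquet (characteristic-multiplier) analysis of the variational equation, treated as a linear, periodic, singularly-perturbed system; then enlarge the basin of attraction up to the whole torus $T_{\rho^{**}}$, uniformly in $\varepsilon$, by a composite-Lyapunov-function argument of the kind used for Theorem \ref{thm:gas}. Throughout, fix $\rho$ as in Lemma \ref{lem2}, so that for $\varepsilon$ small there is a (essentially unique) $\alpha_\varepsilon$-periodic solution $\phi^\varepsilon(t)=(\phi^\varepsilon_m(t),\phi^\varepsilon_e(t))$ of \eqref{singpertsyst} generating $\Gamma_\varepsilon\subset T_\rho$, with $\alpha_\varepsilon\to\alpha_o$ and $\phi^\varepsilon_e\to0$, $\phi^\varepsilon_m\to$ the $\alpha_o$-periodic solution of \eqref{redsyst} generating $\gamma_o$, as $\varepsilon\to0$. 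Linearizing \eqref{singpertsyst} along $\phi^\varepsilon$ gives
\begin{align*}
\dot\xi_m &= A_{11}(t,\varepsilon)\,\xi_m + A_{12}(t,\varepsilon)\,\xi_e, \\
\varepsilon\,\dot\xi_e &= -(\Lambda\otimes I_n)\,\xi_e + \varepsilon\big(A_{21}(t,\varepsilon)\,\xi_m + A_{22}(t,\varepsilon)\,\xi_e\big),
\end{align*}
with $A_{11}=\partial_{x_m}(F_m+G_m)(\phi^\varepsilon(t))$, $A_{12}=\partial_{e_v}G_m(\phi^\varepsilon(t))$, and $A_{21},A_{22}$ the corresponding partials of $G_e$; these are $C^0$ in $t$ and uniformly bounded by Items (i) and (iii) of Lemma \ref{fact1}. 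Since $G_m(x_m,0)\equiv0$, we have $\partial_{x_m}G_m(\cdot,0)\equiv0$, so $A_{11}(t,0)=\partial_{x_m}F_m$ evaluated along $\gamma_o$.

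\textbf{Stage 1 (characteristic multipliers).} Because $\Lambda$ is Hurwitz (Item (iv) of Lemma \ref{fact1}), I would apply a periodic Chang-type near-identity transformation to block-triangularize and then block-diagonalize the above linear system for $\varepsilon$ small, reducing it to a decoupled pair: a slow block whose coefficient is an $O(\varepsilon)$ perturbation of $\partial_{x_m}F_m$ along $\gamma_o$, and a fast block $\varepsilon\dot\eta=-(\Lambda\otimes I_n+O(\varepsilon))\eta$. The monodromy matrix $M_\varepsilon:=X_\varepsilon(\alpha_\varepsilon)$ of the variational equation then converges, as $\varepsilon\to0$, to $\mathrm{blkdiag}\{M_0,\,0\}$, where $M_0$ is the monodromy matrix of the variational equation of \eqref{redsyst} about $\gamma_o$, while the fast block contributes multipliers of size $O(e^{-c/\varepsilon})$. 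By Assumption \ref{ass4} ($\gamma_o$ non-singular and orbitally asymptotically stable) together with Floquet theory, $M_0$ has a simple eigenvalue at $1$ and all remaining eigenvalues strictly inside the open unit disk; continuity of the spectrum then yields, for $\varepsilon\le\varepsilon^{**}$, that $M_\varepsilon$ has a simple characteristic multiplier at $1$ (so $\Gamma_\varepsilon$ is non-singular) and all other multipliers of modulus $<1$. Lemma \ref{lemfloq} then gives that $\Gamma_\varepsilon$ is asymptotically orbitally stable, and since all these estimates are uniform in $\varepsilon\le\varepsilon^{**}$ and in the choice of $\Gamma_\varepsilon$, there is a $\delta>0$ independent of $\varepsilon$ such that the $\delta$-neighbourhood of $\Gamma_\varepsilon$ lies in its basin of attraction.

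\textbf{Stage 2 (uniform basin $=T_{\rho^{**}}$).} Choose $\rho^{**}>0$ small enough that $T_{2\rho^{**}}$ is disjoint from $\Gamma_1$ and from the origin—possible since $\gamma_o$ is compact and $\gamma_o\cap(\gamma_1\cup\{0\})=\emptyset$—and smaller than $\delta/2$, with $\delta$ from Stage 1, and with $\rho\le\rho^{**}$ in Lemma \ref{lem2}. By \lemAE\ there is a $C^1$ function $W:\mathbb{R}^n\to\mathbb{R}_{\ge0}$, proper, vanishing on $\omega$, bounded above and below by class-$\mathcal K_\infty$ functions of $|x_m|_\omega$, and strictly decreasing along nontrivial solutions of \eqref{redsyst} off $\omega$; restricted to $\{x_m:(x_m,e_v)\in T_{2\rho^{**}}\}$, where $|x_m|_\omega=|x_m|_{\gamma_o}$, it satisfies $\partial_{x_m}W\cdot F_m\le-W_3(|x_m|_{\gamma_o})$ for some positive-definite $W_3$. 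Then, exactly as in the proof of Theorem \ref{thm:gas}, the composite function $\mathcal V(x_m,e_v):=W(x_m)+e_v^\top(P\otimes I_n)e_v$, with $P=P^\top>0$ and $P\Lambda+\Lambda^\top P\le-I$, satisfies on $T_{2\rho^{**}}$
\begin{align*}
\dot{\mathcal V}(x_m,e_v)\ \le\ -\tfrac12\,W_3\big(|x_m|_{\gamma_o}\big)\ -\ \tfrac12\,\sigma\,|e_v|^2
\end{align*}
once $\sigma=1/\varepsilon$ exceeds a threshold depending only on $\rho^{**}$. Consequently, sublevel sets of $\mathcal V$ through $T_{\rho^{**}}$ stay inside $T_{2\rho^{**}}$ (forward invariance), and every solution starting in $T_{\rho^{**}}$ converges by LaSalle/Barbalat to $\{|x_m|_{\gamma_o}=0,\,e_v=0\}=\Gamma_o$; since $\rho^{**}<\delta/2$ and $\Gamma_\varepsilon\subset T_{\rho^{**}}$, such solutions enter the $\delta$-neighbourhood of $\Gamma_\varepsilon$ in finite time and then converge to $\Gamma_\varepsilon$ by Stage 1. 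Hence the basin of attraction of $\Gamma_\varepsilon$ contains $T_{\rho^{**}}$ for all $\varepsilon\le\varepsilon^{**}$, which is the assertion. (Alternatively, Lemma \ref{lem1} could be invoked to bring trajectories near $\Omega$, but the composite-Lyapunov argument has the advantage of directly ruling out convergence to $\Gamma_1$ within $T_{2\rho^{**}}$.)

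\textbf{Main obstacle.} The delicate point is Stage 1: since the perturbation is \emph{singular}, one cannot merely appeal to continuous dependence on $\varepsilon$ of the fundamental matrix over $[0,\alpha_\varepsilon]$; instead one must construct the periodic Chang-type transformation and control the boundary-layer contraction of the fast modes uniformly over a full period, while simultaneously accounting for the fact that both the orbit $\Gamma_\varepsilon$ and its period $\alpha_\varepsilon$ depend on $\varepsilon$. This is precisely where the paper's announced singular-perturbation refinements for invariant sets composed of equilibria and limit cycles are needed; the remaining steps (the composite-Lyapunov estimate and the patching with local orbital stability) are routine given the results already established.
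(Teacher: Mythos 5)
Your overall skeleton (linearize along the periodic solution, decouple slow and fast blocks by a near-identity periodic transformation, apply Floquet theory to the slow block, then extend the basin) is close in spirit to the paper's proof, but Stage 1 contains a gap that goes to the heart of the lemma. You deduce from Assumption \ref{ass4} that $M_0$ has a simple eigenvalue at $1$ and \emph{all remaining multipliers strictly inside the open unit disk}. Assumption \ref{ass4} only gives non-singularity (the multiplier $1$ is simple) and orbital asymptotic stability of $\gamma_o$; the converse of the Andronov--Witt criterion (Lemma \ref{lemfloq}) is false, so asymptotic orbital stability does not exclude nontrivial multipliers of modulus exactly $1$, with stability decided by higher-order terms. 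Your ``continuity of the spectrum'' step then fails: an $O(\varepsilon)$ perturbation of a multiplier sitting on the unit circle need not move inside it. In effect you have silently strengthened Assumption \ref{ass4} to hyperbolicity of $\gamma_o$, which is exactly the case already covered by Anosov and explicitly distinguished in the remark following Lemma \ref{lem3}; the paper's proof is organized around Tikhonov's theorem, the Lidskii eigenvalue expansions and spectral invariant-subspace results (Lemmata \ref{lemregpert}, \ref{Lemspecres}, \ref{lemcontin}, \ref{lemanal}), the smoothing Lemma \ref{lemtech} for the decoupling transformation, and a Floquet change of variables on the slow block, precisely in order to obtain a contraction estimate on a ball of radius $\rho^{**}$ uniform in $\varepsilon$ --- which also yields the basin $T_{\rho^{**}}$ directly, making your Stage 2 unnecessary in that route. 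The uniformity in $\varepsilon$ of your local basin $\delta$ is likewise asserted rather than proved, and you yourself identify this as the ``main obstacle''.

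Stage 2, as written, is also incorrect. The inequality $\dot{\mathcal V}\le-\tfrac12 W_3(|x_m|_{\gamma_o})-\tfrac12\sigma|e_v|^2$ on $T_{2\rho^{**}}$ cannot hold: unlike the equilibrium case of Theorem \ref{thm:gas}, where $F(0)=0$ gives $|G_e(0,e_v)|\le d_r|e_v|$, here $G_e(x_m,0)=[V^\dag\otimes I_n]F([1_N\otimes I_n]x_m)$ is generically nonzero along $\gamma_o$ (this is the heterogeneity residual), so the term $2e_v^\top P\,G_e(x_m,e_v)$ contributes $O(|e_v|)$ rather than $O(|e_v|^2)$ and cannot be dominated by $-\sigma|e_v|^2$ near $e_v=0$. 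Indeed, if your inequality were true, $\Gamma_o$ itself would be asymptotically stable for \eqref{singpertsyst}, contradicting the fact that $\Gamma_o$ is not invariant for $\varepsilon>0$ and that the attractor is $\Gamma_\varepsilon\neq\Gamma_o$; your LaSalle conclusion of convergence to $\Gamma_o$ therefore fails. The repair is to settle for an ISS/practical estimate (convergence to an $O(\varepsilon)$-neighborhood of $\Gamma_o$, essentially a localized form of Lemma \ref{lem1}), which still funnels $T_{\rho^{**}}$ into a fixed neighborhood of $\Gamma_\varepsilon$ --- but that patching only helps once Stage 1 is made rigorous under the actual hypotheses.
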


\begin{remark}
Lemma \ref{lem3} is reminiscent of a statement established by Anosov---\cite[Theorem 5]{anosov1960limit}---that pertains to the case in which \color{black}the periodic orbit \color{black}$\gamma_o$  for \eqref{redsyst}  is only non-singular (or hyperbolic).  Although it is claimed in \cite{anosov1960limit} that the proof therein translates directly to the case where  $\gamma_o$  is non-singular and asymptotically stable,  in this paper,  we provide an original proof for the latter case using the theory of perturbed matrices \cite{moro1997lidskii,Stewart90}. 
\end{remark}

The next statement links those from Lemmata \ref{lem1}--\ref{lem3}.  It establishes  that if $\varepsilon $ is sufficiently small, then the periodic behavior of the reduced order system  \eqref{redsyst}  is preserved for system \eqref{singpertsyst}  as well as its stability properties.

\begin{proposition} \label{prop4}
Consider the dynamical system \eqref{singpertsyst} under the assumption that Items (i)--(iv) of Lemma \ref{fact1} hold and assume further that the reduced-order dynamics  \eqref{redsyst} satisfies Assumption \ref{ass4}.

Then,  there exists $\rho_o > 0$ such that,  for each $\rho \in (0,\rho_o]$,  there exists  $\varepsilon_2(\rho) > 0$ such that, for each $\varepsilon \in (0, \varepsilon_2(\rho)]$, 
  \begin{itemize}
    \item[(i)]  system \eqref{singpertsyst} admits  a unique orbit $\Gamma_\varepsilon\subset T_\rho$ generated by a non-trivial 
  $(\alpha_\varepsilon)$-periodic solution,  with $\Gamma_\varepsilon \rightarrow \Gamma_o$ and $\alpha_\varepsilon \rightarrow \alpha_o$ as $\varepsilon \rightarrow 0$;
\item[(ii)] $\Gamma_\varepsilon$ is (locally) asymptotically stable;
\item[(iii)] for any initial condition $\bar x_o \in \mathbb{R}^{nN}$ the corresponding solution $\bar x(t)$ to \eqref{singpertsyst} either converges to $\Gamma_\varepsilon$ or to a $\rho$-neighborhood of $\Gamma_1$, that is,    
\begin{equation}
  \label{664} 
  \limsup_{t \rightarrow \infty} \left|  \bar x(t) \right|_{\Gamma_1} \leq \rho.
\end{equation}
  \end{itemize}
\end{proposition}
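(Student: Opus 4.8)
The plan is to assemble the proposition from Lemmata \ref{lem1}--\ref{lem3} by carefully reconciling their quantifier structures. First I would fix the scales: let $\varepsilon^{**},\rho^{**}$ be the constants furnished by Lemma \ref{lem3}, let $\rho_o^{\mathrm{L}2}$ and the class-$\mathcal K$ function $\varepsilon_o$ be those of Lemma \ref{lem2}, and set $\rho_o:=\min\{\rho_o^{\mathrm{L}2},\,\rho^{**},\,\tfrac{1}{3}\,\mathrm{dist}(\Gamma_o,\overline{\Gamma_1})\}$. The last term is strictly positive since $\Gamma_o$ is compact while the origin---a limit point of $\gamma_1$, hence of $\Gamma_1$---does not lie on $\gamma_o$; this guarantees that $T_\rho$ stays clear of a neighborhood of $\Gamma_1$ for every $\rho\le\rho_o$. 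Then, for a fixed $\rho\in(0,\rho_o]$, I would take $\varepsilon_2(\rho):=\min\{\varepsilon_o(\rho),\,\varepsilon^{**},\,\varepsilon_1(\rho),\,\varepsilon'(\rho)\}$, where $\varepsilon_1(\rho)$ is the threshold of Lemma \ref{lem1} with tolerance $\rho$, and $\varepsilon'(\rho)>0$ is small enough---which is possible by the period-convergence claim of Lemma \ref{lem2}---that $\alpha_\varepsilon$ is ``sufficiently close to $\alpha_o$'' in the sense required by Lemma \ref{lem3} whenever $\varepsilon\le\varepsilon'(\rho)$.

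With this bookkeeping, Item (i) is immediate: for $\varepsilon\le\varepsilon_2(\rho)\le\varepsilon_o(\rho)$, Lemma \ref{lem2} yields the unique nontrivial $\alpha_\varepsilon$-periodic orbit $\Gamma_\varepsilon\subset T_\rho$, with $\alpha_\varepsilon\to\alpha_o$. The convergence $\Gamma_\varepsilon\to\Gamma_o$ follows by squeezing: for any $\rho'\in(0,\rho]$ and $\varepsilon\le\varepsilon_o(\rho')$ the orbit supplied by Lemma \ref{lem2} inside $T_{\rho'}$ must coincide with $\Gamma_\varepsilon$, by uniqueness within the larger torus $T_\rho$, so $\sup_{p\in\Gamma_\varepsilon}|p|_{\Gamma_o}\le\rho'$; letting $\rho'\downarrow0$ (legitimate since $\varepsilon_o\in\mathcal K$) gives one side of the Hausdorff convergence, the other being part of the Anosov-type statement underlying Lemma \ref{lem2}. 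Item (ii) is then a direct application of Lemma \ref{lem3}, whose hypotheses hold because $\Gamma_\varepsilon\subset T_\rho\subset T_{\rho^{**}}$, $\varepsilon\le\varepsilon^{**}$, and $\alpha_\varepsilon$ is close to $\alpha_o$; it gives asymptotic orbital stability of $\Gamma_\varepsilon$ together with a basin of attraction containing the whole torus $T_{\rho^{**}}$.

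For Item (iii), take any $\bar x_o\in\mathbb{R}^{nN}$ with corresponding solution $\bar x(t)$. By Lemma \ref{lem1} with tolerance $\rho$, $\limsup_{t\to\infty}|\bar x(t)|_\Omega\le\rho$ where $\Omega=\Gamma_o\cup\Gamma_1$, so $\bar x(t)$ eventually lies within $\rho$ of $\Gamma_o\cup\Gamma_1$. I would then split into two exhaustive cases. If $\bar x(t^\ast)\in T_{\rho^{**}}$ for some $t^\ast\ge0$, then, since the basin of attraction of $\Gamma_\varepsilon$ from Item (ii) contains $T_{\rho^{**}}$ and is forward invariant, uniqueness of solutions forces $|\bar x(t)|_{\Gamma_\varepsilon}\to0$. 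Otherwise $|\bar x(t)|_{\Gamma_o}>\rho^{**}\ge\rho$ for all $t$, so for all large $t$ the point of $\Gamma_o\cup\Gamma_1$ nearest to $\bar x(t)$ cannot lie on $\Gamma_o$ and hence lies on $\Gamma_1$, which yields $\limsup_{t\to\infty}|\bar x(t)|_{\Gamma_1}\le\rho$, i.e.\ \eqref{664}.

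The main obstacle is not any individual estimate but this bookkeeping: one must choose $\rho$ small enough \emph{simultaneously} to be trapped around $\Gamma_\varepsilon$ (Lemma \ref{lem2}), to sit inside the \emph{uniform} basin $T_{\rho^{**}}$ of Lemma \ref{lem3}, and to keep $T_\rho$ disjoint from a neighborhood of $\Gamma_1$, while the same $\rho$ doubles as the practical-stability tolerance in Lemma \ref{lem1}; and one must invoke forward invariance of the basin to exclude a solution shuttling forever between neighborhoods of $\Gamma_o$ and of $\Gamma_1$, so that the two cases in Item (iii) really are exhaustive.
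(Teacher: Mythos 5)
Your proposal is correct and follows essentially the same route as the paper's proof: it fixes $\varepsilon_2(\rho)$ as a minimum of the thresholds supplied by Lemmata \ref{lem1}--\ref{lem3}, obtains $\Gamma_\varepsilon\to\Gamma_o$ from the uniqueness of the Anosov orbit in nested tori, gets Item (ii) from Lemma \ref{lem3} with the uniform basin $T_{\rho^{**}}$, and derives Item (iii) from Lemma \ref{lem1} combined with that basin (your explicit case split on whether the solution ever enters $T_{\rho^{**}}$ is, if anything, a slightly more careful rendering of the paper's terse dichotomy). One cosmetic adjustment: take $\rho_o$ strictly below $\rho^{**}$ (e.g.\ replace $\rho^{**}$ by $\rho^{**}/2$ in your definition of $\rho_o$) so that in the ``never enters $T_{\rho^{**}}$'' case the bound $\limsup_{t\to\infty}|\bar x(t)|_{\Omega}\leq\rho$ from Lemma \ref{lem1} is necessarily realized by $\Gamma_1$ even in the borderline situation $\rho=\rho_o$.
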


\begin{proof}
Items (i)-(iv)  in Lemma \ref{fact1}  and Assumption \ref{ass4} imply that  the statements of Lemmata \ref{lem1}--\ref{lem3} hold.  Then,  let Lemma \ref{lem3} generate $\varepsilon^{**}>0$ and $\rho^{**}>0$.  Furthermore, let Lemma \ref{lem2} generate $(\rho_o, \varepsilon_o(\cdot))$,  for each $\rho \in (0,  \rho_o]$, let Lemma \ref{lem1} generate $(\varepsilon_1(\rho), \varepsilon_1(\rho^{**}))$, and let 
$$ 
\varepsilon \leq \varepsilon_2(\rho) := \min \{   \varepsilon^{**},  \varepsilon_o(\rho),   \varepsilon_o(\rho^{**}),  \varepsilon_1(\rho),  \varepsilon_1(\rho^{**})   \} $$ 
be arbitrarily fixed. 

After Lemma \ref{lem2},   there exists a unique periodic orbit $\Gamma_{\varepsilon} \subset T_{\rho} \cap T_{\rho^{**}} =  T_{\min\{\rho, \rho^{**} \}}  $ generated by a solution to \eqref{singpertsyst}.  Now, given any sequence $\{ \epsilon_i\}^{\infty}_{i=1}$ that converges to zero and such that $\epsilon_i \leq \varepsilon_2(\rho)$ for all $i \in \{1,2,... \}$,  from the above, we know that,  for $i$ large enough,  the unique orbit $\Gamma_{\epsilon_i}$ satisfies $\Gamma_{\epsilon_i} \subset T_{\rho_i}$, where  $\rho_i := \varepsilon_o^{-1}(\epsilon_i)$---note that  $\varepsilon_o^{-1}$ exists and is of class $\mathcal{K}$ because so it $\varepsilon_o$.   
Item [i] of the proposition follows since $T_{\rho_i}$ converges to $\Gamma_o$---see \eqref{trho}.

Next, after Lemma \ref{lem3},  we conclude that  $\Gamma_{\varepsilon}$ is orbitally asymptotically stable  and $T_{\rho^{**}}$ is inside the domain of attraction of $\Gamma_{\varepsilon}$.  This establishes Item (ii). 

Finally, from Lemma \ref{lem1}  we conclude that each solution to \eqref{singpertsyst} either converges to $T_{\min\{\rho, \rho^{**} \}} \subset T_{\rho^{**}}$,  so it also converges to $\Gamma_{\varepsilon}$,  or it converges to a  $\min\{ \rho, \rho^{**} \}-$neighborhood 
of $\Gamma_1$.  This establishes Item (iii).
\end{proof}

The last technical lemma  provides  a local stability analysis around the origin  of \eqref{singpertsyst}.  It states that the origin is a hyperbolic equilibrium point, for all sufficiently-small values of $\varepsilon$.  Furthermore,  inspired by the Stable Manifold Theorem \cite[Theorem~13.4.1]{coddington1955theory},  we show that the stable and unstable manifolds around the origin are uniquely defined on a neighborhood whose size does not shrink with $\varepsilon$.   

 \begin{lemma} [Local behavior around the origin]  \label{lem6}
 Consider system \eqref{singpertsyst} and let Items (i)-(ii) of Lemma \ref{fact1} hold.  Assume further that the corresponding reduced-order dynamics  \eqref{redsyst} satisfies Assumption \ref{ass4}.  Then,  there exist $\rho^*>0$, $\varepsilon^* >0$,  a neighborhood of the origin denoted  $U  \subset  \mathbb{R}^n$,    and $r > 0$ such that,  for each 
$ \varepsilon \in (0,  \varepsilon^*)$,  
\begin{itemize}
\item[(i)] system \eqref{singpertsyst} admits a unique unstable and stable manifolds $(W^u_\varepsilon(0), W^s_\varepsilon(0))$ defined on $U$;
\item[(ii)] for each $\bar{x}(t)$ bounded solution to \eqref{singpertsyst} starting from $\bar{x}_o \in U \backslash W^s_\varepsilon(0)$,  there exists $T_1 > 0$ such that  $|\bar{x} (t)| \geq r$  for all $t \geq T_1$;
\item[(iii)] for each $\bar{x}(t)$ solution to \eqref{singpertsyst} such that $|\bar{x}(0)|_{\Gamma_1} \leq \rho^*$,  there exists $T_2 > 0$ such that $|\bar{x} (T_2)| < r$.   
\end{itemize} 
\end{lemma}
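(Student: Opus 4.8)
The plan is to reduce the local analysis of \eqref{singpertsyst} around the origin to that of a one--parameter family of finite--dimensional hyperbolic flows, obtained by restricting to an attracting slow manifold, and then to transfer the conclusions back to the full system with estimates that are uniform in $\varepsilon$. First I would linearize \eqref{singpertsyst} at the origin, written with the fast equation in the form $\dot e_v=-\tfrac1\varepsilon(\Lambda\otimes I_n)e_v+G_e$; the Jacobian is
\[
J_\varepsilon=\begin{bmatrix} A & B\\ C & -\tfrac1\varepsilon(\Lambda\otimes I_n)+D\end{bmatrix},\qquad A:=\frac{\partial F_m}{\partial x_m^\top}(0),
\]
with $B,C,D$ the remaining blocks of the Jacobians of $G_m,G_e$ at $0$ (note $\partial G_m/\partial x_m(0)=0$ since $G_m(\cdot,0)\equiv 0$). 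Since $\Lambda$ is Hurwitz (Item (iv) of Lemma \ref{fact1}), the perturbation theory of matrix spectra and invariant subspaces \cite{moro1997lidskii,Stewart90} yields, for all small $\varepsilon$, that all but $n$ of the eigenvalues of $J_\varepsilon$ have real part of order $-1/\varepsilon$, while the remaining $n$ converge to those of $A$; by Assumption \ref{ass4}, $A$ is hyperbolic with exactly $k\in\{1,\dots,n\}$ eigenvalues in the open right half--plane (with $k=n$ when the origin is a repeller; if the origin is a non--hyperbolic repeller one replaces this linear step by a Chetaev--type estimate, and $W^s_\varepsilon(0)$ reduces to the strong--stable fiber through $0$, with (iii) vacuous). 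Hence the origin is a hyperbolic equilibrium of \eqref{singpertsyst}, with a $k$--dimensional unstable subspace and an $(nN-k)$--dimensional stable subspace, and---the delicate point---with spectral projectors and hyperbolicity exponents bounded uniformly in $\varepsilon$. In parallel, exploiting the strong fast contraction, I would invoke the standard construction of an attracting locally invariant slow manifold $M_\varepsilon=\{e_v=h_\varepsilon(x_m)\}$, defined on a neighborhood $\mathcal{U}\subset\mathbb{R}^n$ of $x_m=0$ independent of $\varepsilon$, with $h_\varepsilon(0)=0$ and $\|h_\varepsilon\|_{C^1}=O(\varepsilon)$; the flow on $M_\varepsilon$ is $\dot x_m=\widetilde{F}_\varepsilon(x_m):=F_m(x_m)+G_m(x_m,h_\varepsilon(x_m))$, a $C^1$, $O(\varepsilon)$--perturbation of \eqref{redsyst} whose linearization at $0$ is $A+O(\varepsilon)$, hence again uniformly hyperbolic.

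\textbf{Item (i).} Because $M_\varepsilon$ is normally hyperbolic and attracting, every orbit of \eqref{singpertsyst} that remains near the origin in backward time must lie on $M_\varepsilon$; consequently $W^u_\varepsilon(0)$ coincides with the unstable manifold of $\widetilde{F}_\varepsilon$ at $0$ inside $M_\varepsilon$, which is $k$--dimensional and---by robustness of the stable manifold theorem to $C^1$--small perturbations---defined on a fixed--size neighborhood. For the stable manifold I would use the stable fibration of $M_\varepsilon$: there is a family of strong--stable fibers $\mathcal{F}_\varepsilon(p)$, $p\in M_\varepsilon$, of uniform size, and
\[
W^s_\varepsilon(0)=\bigcup_{p\in W^s_{M_\varepsilon}(0)}\mathcal{F}_\varepsilon(p),
\]
where $W^s_{M_\varepsilon}(0)$ is the $(n-k)$--dimensional stable manifold of $\widetilde{F}_\varepsilon$ at $0$; a point near $M_\varepsilon$ lies on $W^s_\varepsilon(0)$ iff its base point lies on $W^s_{M_\varepsilon}(0)$. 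This realizes $W^s_\varepsilon(0)$ as an $(nN-k)$--dimensional manifold on a fixed neighborhood $U$, and uniqueness of both manifolds follows from their usual characterization as the maximal locally invariant manifolds tangent to the stable (resp.\ unstable) subspace, together with the uniform hyperbolicity above. Equivalently, one may bypass the slow manifold and prove directly a version of the stable manifold theorem of \cite{coddington1955theory} carrying $\varepsilon$--uniform estimates, the point being that the $O(1/\varepsilon)$ directions are contracting and therefore favor the contraction--mapping argument.

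\textbf{Items (ii)--(iii).} For (ii), fix a bounded orbit $\bar x(t)$ with $\bar x_o\in U\setminus W^s_\varepsilon(0)$. Since the origin is hyperbolic and $\bar x_o\notin W^s_\varepsilon(0)$, the orbit does not converge to $0$, hence cannot remain in $B_r$ forever (the maximal invariant set of \eqref{singpertsyst} in $\overline{B_r}$ is $\{0\}$ for $r$ small). Shrinking $r$, I would arrange that $B_r$ is an isolating block for the uniformly hyperbolic equilibrium: in local straightening coordinates with a Lyapunov--adapted metric the block $\{|\cdot|_s\le\delta_s\}\cap\{|\cdot|_u\le\delta_u\}$ is met by each orbit in a single time interval---the stable component strictly decreases and the unstable component strictly increases---and any orbit off $W^s_\varepsilon(0)$ leaves it in finite time; since $B_r$ is squeezed between two such blocks, the set $\{t\ge 0:\ |\bar x(t)|\le r\}$ is bounded above, which yields $T_1$. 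Uniformity in $\varepsilon$ is obtained, once more, by performing this argument on the slow manifold, where the linearization is $A+O(\varepsilon)$, and noting that the $e_v$--component is slaved to within $O(\varepsilon)$ of $h_\varepsilon$. For (iii) I would use that $\gamma_1=W^u_o(0)\cap W^s_o(0)\subset W^s_o(0)$, so every solution of \eqref{redsyst} starting on $\gamma_1$ converges to $0$: if the point of $\Gamma_1$ nearest $\bar x(0)$ already has norm $<r/2$, then $|\bar x(0)|<r/2+\rho^*<r$; otherwise this nearest point $q$ lies in the compact set $\{x_m\in\gamma_1:\ |x_m|\ge r/2\}$, on which the ($\varepsilon$--independent) first--hitting time $\tau(q)$ of the ball $B_{r/2}$ under \eqref{redsyst} is finite and uniformly bounded by some $\tau_{\max}$. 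A Gronwall comparison of $x_m(\cdot)$ with the solution of \eqref{redsyst} through $q$ on $[0,\tau(q)]$---using $|G_m(x_m,e_v)|\le h\,|e_v|$ and the fast decay of $e_v$ toward $h_\varepsilon=O(\varepsilon)$---shows that, for $\rho^*$ and $\varepsilon^*$ chosen small relative to $\tau_{\max}$ (both choices independent of $\varepsilon$), one has $|\bar x(\tau(q))|<r$, so $T_2:=\tau(q)$ works.

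\textbf{Main obstacle.} The crux is precisely the uniformity in $\varepsilon$ of Items (i)--(ii): the Jacobian $J_\varepsilon$ has norm of order $1/\varepsilon$, so a naive application of the stable manifold theorem (or of a linearization/isolating--block argument) would furnish invariant manifolds and a block whose size shrinks like $\varepsilon$. The resolution is the slow--fast separation---the offending $O(1/\varepsilon)$ directions are exactly the strongly contracting fast ones, which one removes by descending to the $\varepsilon$--uniform dynamics on $M_\varepsilon$ and then reconstructs through the stable fibration---and the real work lies in verifying that the resulting manifolds, hyperbolic splitting, and isolating block carry constants genuinely independent of $\varepsilon$, which is what distinguishes this lemma from the classical local theory of \cite{coddington1955theory,KHALIL96}.
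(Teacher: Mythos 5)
Your proposal is correct in substance but takes a genuinely different route for Items (i)--(ii). The paper never constructs a slow invariant manifold: it works directly with the full linearization $\dot{\bar x}=(A_\varepsilon/\varepsilon)\bar x$, uses the Lidskii eigenvalue expansion and the spectral-resolution/analytic-invariant-subspace results (Lemmata \ref{lemregpert}, \ref{Lemspecres}, \ref{lemanal}) to show that the hyperbolicity rates $r_s,r_u$ and the overshoots $c_s,c_u$ of $A_\varepsilon/\varepsilon$ are, up to $O(\varepsilon)$, those of $\frac{\partial F_m}{\partial x_m^\top}(0)$ and of $\Lambda$, and then applies the stable-manifold theorem of Lemma \ref{lemstabmani} once, observing that the nonlinear residual $g(\bar x)=\bar F(\bar x)-(A_\varepsilon/\varepsilon)\bar x$ is $\varepsilon$-independent, so the admissible radius $\gamma$ depends only on $\varepsilon$-free constants; Items (i)--(ii) then follow with $U=B_\gamma$ and $r=\gamma/2$. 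You instead descend to a Fenichel-type attracting slow manifold $M_\varepsilon$ and reconstruct $W^s_\varepsilon(0)$ as the union of strong-stable fibers over the stable manifold of the reduced flow on $M_\varepsilon$ --- precisely the alternative you yourself flag; this buys geometric transparency (the $\varepsilon$-uniform neighborhood is inherited from the $\varepsilon$-uniform slow flow) at the cost of invoking normally-hyperbolic-manifold and fibration theorems that the paper deliberately replaces by elementary matrix perturbation theory plus one citation of the classical local theory. For Item (iii) the two arguments coincide in substance: the paper applies Tikhonov's theorem (Lemma \ref{thmtykho}) over the finite horizon given by the maximal hitting time of $B_{r/2}$ from $\gamma_1\setminus B_r$ under \eqref{redsyst} and sets $\rho^*=r/(3M)$, whereas you carry out the same finite-time comparison by hand via Gr\"onwall. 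Two remarks: your ``single time interval'' isolating-block claim in Item (ii) (excluding re-entry of bounded orbits off $W^s_\varepsilon(0)$) is exactly as strong as Item 3 of Lemma \ref{lemstabmani} that the paper cites, so it should be supported by a citation or a proof rather than asserted; and your Chetaev-type aside concerns a non-hyperbolic repulsive origin, a case the paper's proof does not treat (it tacitly assumes the linearization of \eqref{redsyst} at the origin has no eigenvalues on the imaginary axis), so it is a harmless extension rather than a needed step.
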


\subsection*{Proof of Theorem \ref{thm2}}

Under Assumptions \ref{ass3}--\ref{ass2},  Items (i)-(iv) of Lemma \ref{fact1} hold for system \eqref{singpertsyst}.  This and  Assumption \ref{ass4} imply that  the statements of  Proposition \ref{prop4}  and Lemmata \ref{lem1}--\ref{lem6} hold.  Then, let Lemma \ref{lem6} generate $\rho^{*}>0$ and $\varepsilon^*>0$ and let Proposition \ref{prop4} generate $\left(\rho_o, \varepsilon_2(\min\{\rho^*, \rho_o\}/4)\right)$.  We show that the statement of Theorem \ref{thm2} holds with $\sigma^f := 1 / \min \left\{  \varepsilon^*,  \varepsilon_2(\min\{\rho^*, \rho_o\}/4) 
 \right\} $ in the following four steps: 

\begin{enumerate}

\item First, for any  $\sigma\geq \sigma_f$ or, equivalently, any $\varepsilon = 1/\sigma$ satisfying $\varepsilon \leq \min \left\{  \varepsilon^*,   \varepsilon_2(\min\{\rho^*, \rho_o\}/4)  \right\}$,   we use Item (i) of Proposition \ref{prop4} to conclude the existence of a unique nontrivial periodic orbit $\Gamma_\varepsilon$ generated by a periodic solution to \eqref{singpertsyst} of period $\alpha_{\varepsilon}$. From Item (ii) of the same Proposition it follows that $\Gamma_\varepsilon$ is locally asymptotically stable. In addition, from Item (iii) of Proposition \ref{prop4} it follows that  each solution $\bar x(t)$ to system \eqref{singpertsyst} either converges to the orbit $\Gamma_\varepsilon$, otherwise,  it converges to a 
$(\min\{\rho^*, \rho_o\}/4)$--neighborhood of $\Gamma_1$;  that is,  \eqref{664} holds with $\rho =  \min\{\rho^*, \rho_o\}/4 $ and, consequently,  there exists $T<\infty$ such that 
\begin{align} \label{eqrhostar}
|\bar{x}(t)|_{\Gamma_1}  =  |(x_m(t), e_v(t))|_{\Gamma_1}  \leq \min\{\rho^*, \rho_o\}/2 \qquad \forall t \geq T.   
\end{align}

\item Let us now introduce the backward propagation of the stable manifold $W^s_\varepsilon(0)$ introduced for \eqref{singpertsyst} in Lemma \ref{lem6}. That is,  we  introduce  set 

\[
 R(W^s_\varepsilon(0))  :=   \{  \bar{x}(t) :  t \leq 0,~ \bar{x}(0) \in W^s_\varepsilon(0)
\}   
\]
and prove by contradiction that \color{black} the solution $\bar{x}$ to \eqref{singpertsyst} satisfying \eqref{eqrhostar} must start from the set $R(W^{s}_\varepsilon(0))$.   Indeed,  assume that the opposite holds.  Then,  using Item (iii) in Lemma \ref{lem6},  we conclude that the solution $\bar{x}$ must enter $B_r$ at some $T^*  \geq T$.  In particular,  $\bar{x}(T^*) \in U$ and $\bar{x}(T^{*}) \notin W^{s}_\varepsilon(0)$.  Now,  using  Item (ii) in  Lemma \ref{lem6}, we conclude that there exists $T_1>0$ such that 
$$ | \bar{x}(T^* + t) | \geq r \qquad \forall t \geq T_1.  $$
However, since $ | \bar{x}(T^* + T_1) |_{\Gamma_1} \leq \min\{\rho^*, \rho_o\}/2$, it follows that $\bar{x}$ must enter $B_r$ again under Item (iii) of Lemma \ref{lem6},  which contradicts Item (ii). 

\item Next,  we show that the set $R(W^s_\varepsilon(0))$ is a null measure set using contradiction.   That is,  let $S_o \subset R(W^s_\varepsilon(0))$  such that $\mu (S_o) \neq 0$. Assume without loss of generality that for some $T < 0$,  we have 
\begin{align*}
& S_o \subset 
\\ &  \{  \bar{x}(t) :  t \in [-T, 0],~ \bar{x}(0) \in W^s_\varepsilon(0), ~ \bar{x} ~\text{solution to \eqref{singpertsyst}} \}.  
\end{align*}
Note that
$$  R^b(T,S_o) := \{ \bar{x} (T) : \bar{x}(0) \in S_o  \}   \subset W^s_\varepsilon(0) $$ 
with  $\mu(W^s_\varepsilon(0)) = 0$.  However,  using Lemma \ref{clcl1} from the Appendix,  we conclude that  $\mu (S_o) = 0$. 

\item Finally,  using the inverse transformation \eqref{invbarx},  it  follows that the orbit 
$$ \mathcal O_\varepsilon := \{  x \in \mathbb{R}^{nN} : (x_e,e_v) \in \Gamma_\varepsilon\} $$ 
is  almost GAS for  \eqref{ntwksyst}.  The second statement follows from Lemma \ref{lem2}. 
\end{enumerate}
\hfill $\blacksquare$


\section{Case study: a network of Andronov-Hopf oscillators}\label{sec:casestudy}
\label{A-H}

For illustration of our main theoretical findings, we address, as a case-study, the analysis of a network of Andronov-Hopf oscillators. The equation of such oscillator represents a normal form of the bifurcation carrying the same name and is given by 
\begin{equation}\label{eq:1.001ex}
\dot  z = - \nu |z|^2 z + \mu z,
 \end{equation}
where $z \in \mathbb{C}$ denotes the state of the oscillator and $\nu$, $\mu\in\mathbb{C}$ are constant parameters; $\nu:= \nu_R + j \nu_I$ and $\mu:=\mu_R + j \mu_I$.  The analysis of \eqref{eq:1.001ex} is well documented in the literature. For instance, via Lyapunov-exponents-based methods, as in  \cite{Kuznetsov} and \cite{Perko}, or using  Lyapunov's direct method, as in \cite{Strogatz2} and \cite{slot}. The behavior on the phase plane is illustrated in Fig. \ref{ffigHopf}. 

\begin{figure}[h!]
\begin{center}
  \begin{minipage}{4in}
    \begin{picture}(0,130)
    \put(55,-32){\includegraphics[width=6cm,height=6cm]{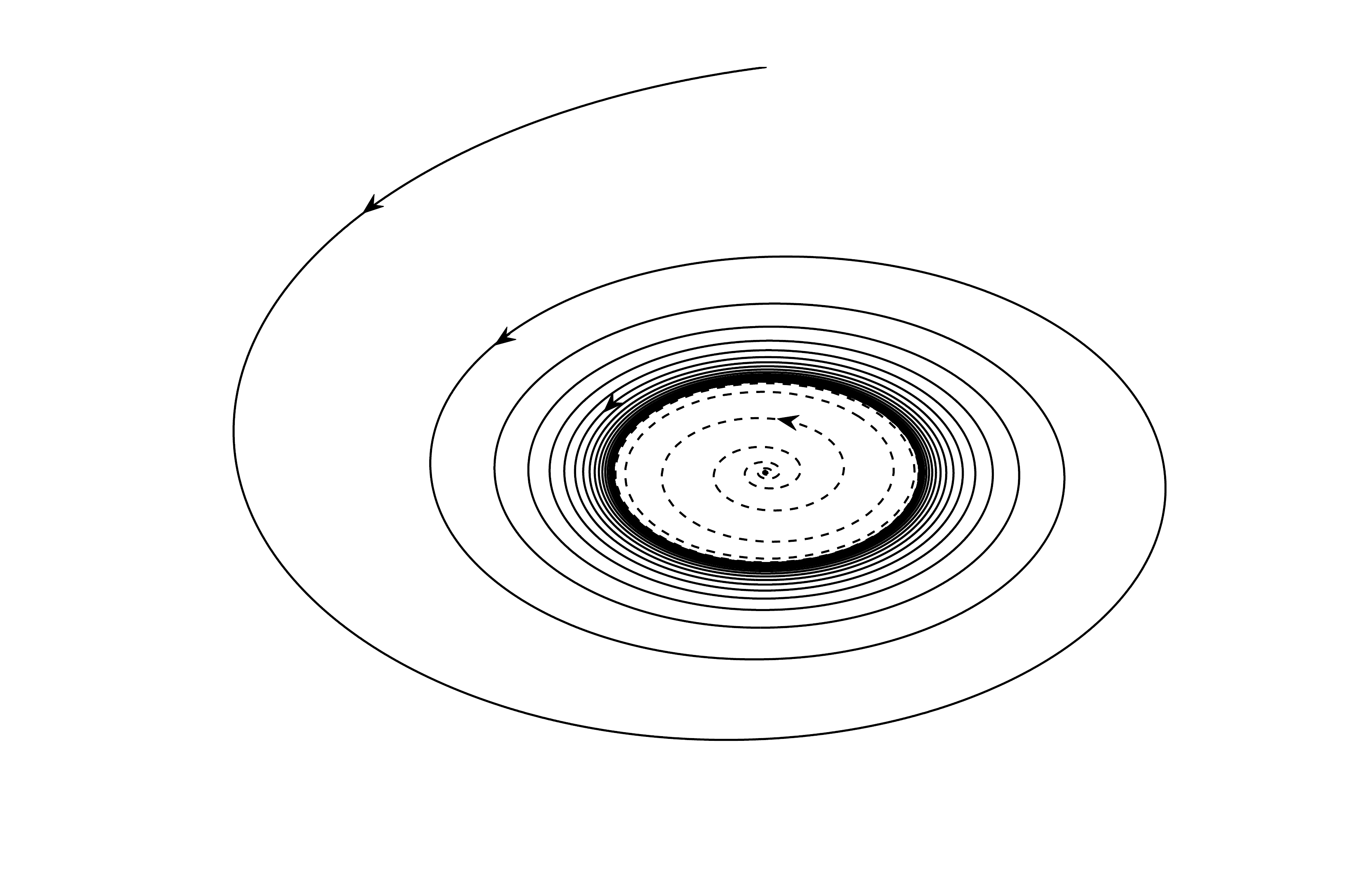}}
  \end{picture}
\caption{Trajectories of the Andronov-Hopf oscillator on the complex plane when $\mu_R > 0$.  In this case,  the origin is an unstable equilibrium and all trajectories tend to the stable limit-cycle of radius $\sqrt{\mu_R}$ and period $\frac{2 \pi}{\mu_I}$.}
\label{ffigHopf}
  \end{minipage}
\end{center}
\end{figure}  

Now, for the purpose of this paper, we consider $N$ forced  Andronov-Hopf oscillators for which, for simplicity, we assume that $\nu = 1$,  the general case where $\nu \neq 1$ will be considered elsewhere.  That is, consider the systems 
\begin{align} \label{AH-unitex}
\dot{z}_i = -|z_i|^2 z_i + \mu_i z_i + u_i \qquad i \in \{1,2,\ldots,N\}
\end{align}
where $z_i \in \mathbb{C}$ denotes the $i$-th oscillator's state, $u_i \in \mathbb{C}$ is its control input,  and $\mu_i := \mu_{Ri} + i \mu_{Ii}$ is a constant complex parameter.   

Now we design each input $u_i$ as in \eqref{ui} while replacing $(x_i,x_j)$ therein by $(z_i,z_j)$.  Then,  in compact form, the overall network dynamics in closed loop takes the form 
\begin{equation} \label{ntwksystex}
\dot  z =  F({ z})  -\sigma 
L z, 
\end{equation}
where $z := [z_1\,\cdots\,z_N]^\top$ and the function 
$F: \mathbb{C}^N \to \mathbb{C}^N$ is given by 
\begin{equation*} 
\begin{aligned}
F(z) & :=  \left[f_1(z_1) \quad f_2(z_2) \quad \cdots \quad  f_N(z_N)\right]^\top, 
\\ 
f_i(z_i)& := - |z_i|^2 z_i + \mu_i z_i. 
\end{aligned}
\end{equation*}

Next,  using  \eqref{ntwksystex} and the coordinate transformation defined in  \eqref{282},  we can rewrite the  network dynamics in the singular perturbation form,  where the reduced-order dynamics  defined by \eqref{redsyst} has the form 
\begin{align} \label{A13ex}
\dot{z}_m =  f_m(z_m) := -|z_m|^2 z_m + \mu_m z_m,
\end{align}
where $z_m = v_l^\top z$,  the parameter $\mu_m \in \mathbb{C}$  corresponds to the weighted average of the $\mu_i$s, {\em i.e.}, 
\begin{equation*}
\begin{aligned} 
\mu_m & := \mu_{mR} + i \mu_{mI}, 
\\ 
\mu_{mR}  & := \sum^N_{j=1} v_{lj} \mu_{Rj},   \quad 
 \mu_{mI}  := \sum^N_{j=1} v_{lj} \mu_{Ij}.
\end{aligned}
\end{equation*}

Hence,  the reduced-order dynamics \eqref{A13ex} is an Andronov-Hopf oscillator, but we note that $\mu_{mR}$ and $\mu_{mI}$ do not correspond to simple averages of the individual parameters $\mu_{Ri}$ and $\mu_{Ii}$; through the Laplacian's eigenvector $v_l$, they depend on  the network's topology. 

Two cases that are of interest are studied next.  
 
\subsection{Case in which  $\mu_{mR} \neq 0$}

The parameter $\mu_{mR}$, which depends both on the individual parameters of the oscillators and on the structure of the network, is particularly important.  Indeed, as we shall show,  for $\sigma$ sufficiently large, the  asymptotic collective behavior of the network is uniquely defined by the sign of $\mu_{mR}$.   
\begin{proposition}  \label{prop2}
Consider the network of Andronov-Hopf oscillators defined in \eqref{ntwksystex} interconnected over a connected directed graph. Then, the following statements hold true:
\begin{itemize}
\item[(i)] If $\mu_{mR} < 0$ then there exists  $\sigma^f>0$ such that, 
 for all $\sigma\geq \sigma^f$,  the origin $\{z = 0\}$ is GAS. 

\item[(ii)]  If $\mu_{mR} >0$ then there exists $\sigma^f>0$ such that, for each $\sigma\geq \sigma^f$,  system \eqref{ntwksystex} has a unique nontrivial periodic orbit which is almost globally asymptotically stable. \\[-20pt]
\end{itemize} 
\end{proposition}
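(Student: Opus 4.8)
The plan is to obtain both items of Proposition~\ref{prop2} directly from Theorems~\ref{thm:gas} and~\ref{thm2}, by checking that the network~\eqref{ntwksystex} and its reduced-order dynamics~\eqref{A13ex} verify the hypotheses of those theorems. First I would dispatch the standing Assumptions~\ref{ass3}--\ref{ass2}, which hold irrespective of the sign of $\mu_{mR}$: connectedness (Assumption~\ref{ass3}) is part of the statement; each $f_i(z_i)=-|z_i|^2z_i+\mu_iz_i$ is polynomial in $(\Re e\,z_i,\Im\,z_i)$ and vanishes at $z_i=0$, which gives Assumption~\ref{ass1}; and with the storage function $V_i(z_i):=\tfrac12|z_i|^2$ one computes $\dot V_i=\Re e(\bar z_iu_i)-|z_i|^4+\mu_{Ri}|z_i|^2$, so that $H_i(z_i):=|z_i|^4-\mu_{Ri}|z_i|^2$ satisfies $H_i(z_i)\ge\tfrac12|z_i|^4$ whenever $|z_i|\ge\rho:=\sqrt{2\max_i|\mu_{Ri}|}$, which is Assumption~\ref{ass2} with $\psi_i(s)=\tfrac12 s^4$. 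Lemma~\ref{lem0} then supplies the ultimate bound $r$ and the associated compact set $B_r$.

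For Item~(i), with $\mu_{mR}<0$, I would take $V_m(z_m):=\tfrac12|z_m|^2$ as a Lyapunov function for~\eqref{A13ex}: since $\tfrac{\partial V_m}{\partial z_m^\top}f_m(z_m)=-|z_m|^4+\mu_{mR}|z_m|^2\le-|\mu_{mR}||z_m|^2$, the origin of~\eqref{A13ex} is globally asymptotically (in fact exponentially) stable and~\eqref{eqder} holds with $\alpha(s):=\sqrt{|\mu_{mR}|}\,s\in\mathcal K_\infty$. Condition~\eqref{eqder1} then follows from the compactness (and convexity) of $B_r$ together with the $C^1$ regularity of $G_e$: on $B_r$ one has $|\partial V_m/\partial x_m|=|x_m|$ and, by the mean value theorem, $|G_e(x_m,e_v)-G_e(0,e_v)|\le L_r|x_m|$ for a constant $L_r=L_r(r)$, so both quantities are bounded by $c_r\alpha(|x_m|)$ with $c_r:=\max\{1,L_r\}/\sqrt{|\mu_{mR}|}$. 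Theorem~\ref{thm:gas}(ii) then yields $\sigma^f>0$ such that the origin of~\eqref{ntwksystex} is GAS for every $\sigma\ge\sigma^f$.

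For Item~(ii), with $\mu_{mR}>0$ (and $\mu_{mI}\neq0$, so that a genuine orbit exists), the substantive step is to verify Assumption~\ref{ass4} for~\eqref{A13ex}. In polar form the reduced dynamics decouples: the modulus $\varrho:=|z_m|$ obeys $\dot\varrho=\varrho(\mu_{mR}-\varrho^2)$ and the argument obeys $\dot\theta=\mu_{mI}$; hence every solution with $\varrho(0)\neq0$ converges to the circle $\gamma_o:=\{|z_m|=\sqrt{\mu_{mR}}\}$, which is a periodic orbit of period $\alpha_o=2\pi/|\mu_{mI}|$, while the origin is a repeller because the Jacobian of $f_m$ at $0$ has eigenvalues $\mu_{mR}\pm j\mu_{mI}$, both with positive real part. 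Thus $\omega:=\gamma_o\cup\{0\}$ is a compact, globally attractive invariant set of the required structure, and the ``hyperbolic'' alternative involving a homoclinic orbit $\gamma_1$ does not arise. To show $\gamma_o$ is non-singular and orbitally asymptotically stable I would compute its characteristic multipliers: linearizing the radial dynamics along $\varrho=\sqrt{\mu_{mR}}$ gives the contracting multiplier $e^{-2\mu_{mR}\alpha_o}<1$, while the direction tangent to $\gamma_o$ (from $\dot\theta$ being constant) gives the simple multiplier $1$, so Lemma~\ref{lemfloq} applies. With Assumptions~\ref{ass3}--\ref{ass2} and~\ref{ass4} in force, Theorem~\ref{thm2} produces $\sigma^f>0$ such that for all $\sigma\ge\sigma^f$ the network~\eqref{ntwksystex} admits a unique nontrivial periodic orbit that is almost globally (orbitally) asymptotically stable --- which is exactly Item~(ii) --- and, in addition, this orbit collapses onto $\{z_m\in\gamma_o,\ e_v=0\}$ as $\sigma\to\infty$.

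The main obstacle is the verification of Assumption~\ref{ass4} in Item~(ii): establishing the global picture of the Andronov--Hopf reduced flow --- global attractivity of $\gamma_o\cup\{0\}$, repulsivity of the origin, and non-singularity of $\gamma_o$ --- which, although classical for this normal form, must be carried out explicitly and requires separating off the degenerate case $\mu_{mI}=0$. Everything else amounts to the Lyapunov bookkeeping above and the mean-value estimates on $G_e$ over the compact set $B_r$ furnished by Lemma~\ref{lem0}.
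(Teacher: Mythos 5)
Your proposal is correct and follows essentially the same route as the paper's proof: verify Assumptions \ref{ass3}--\ref{ass2} with the quadratic storage functions, apply Theorem \ref{thm:gas} to the reduced dynamics for item (i), and verify Assumption \ref{ass4} for the reduced Andronov--Hopf system (which the paper does by citing a reference rather than computing multipliers) before invoking Theorem \ref{thm2} for item (ii). Your explicit check of \eqref{eqder}--\eqref{eqder1} on $B_r$ and your caveat about the degenerate case $\mu_{mI}=0$ are finer-grained than the paper's argument but do not constitute a different approach.
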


\begin{proof}
  The network model \eqref{ntwksystex} satisfies Assumptions \ref{ass3}--\ref{ass2} regardless of the sign of $\mu_{mR}$. Indeed,  Assumption \ref{ass3} is explicit in the Proposition. 
On the other hand,  each single unit has a polynomial vector field, so Assumption \ref{ass1} holds. To see that Assumption \ref{ass2} on semi-passivity of each unit in \eqref{AH-unitex} holds, we evaluate the total derivative of  the Lyapunov function candidate 
$$V(z_i) := \frac{1}{2} z_i^* z_i,  $$ 
where $z_i^*$ is the complex conjugate of $z_i$, 
to obtain
$$ \dot{V}(z_i) = - \left|z_i\right|^4 + \mu_{Ri} \left|z_i\right|^2 + z_i^* u_i.   $$    

 Now, to complete the proof, we note that if  $\mu_{mR} < 0$,  it follows that $\dot{V}(z_m)$ is negative definite and the origin for  the reduced-order dynamics  \eqref{A13ex} is globally asymptotically stable. Hence,  after Theorem \ref{thm:gas}, Item (i) of the proposition follows. 

In the opposite case,  if  $\mu_{mR} > 0$,  the total derivative of $V(z_m)$ along the solutions to \eqref{A13ex} satisfies 
\begin{align}  \label{eqdotVzm}
\dot{V}(z_m) = - \left|z_m\right|^4 + \mu_{mR} \left|z_m\right|^2. 
\end{align} 
Hence,   system \eqref{A13ex} admits a compact invariant set that is composed of two disjoint invariant subsets, that is
\begin{equation*}
\omega =  \left\{ z_m \in \mathbb{C}: \, |z_m| =  \sqrt{\mu_{mR}}\, \right\}  \cup \big\{z_m = 0\big\}.
\end{equation*}
Moreover, the invariant orbit 
$$ \gamma_o := \left\{ z_m \in \mathbb{C} : \, |z_m| =  \sqrt{\mu_{mR}}\, \right\} $$  
is almost globally asymptotically stable and the origin 
$\{z_m  = 0\}$ is anti-stable  ---  see~\cite{MICNON-2015}.  
As a result,  Item (ii) of the proposition follows using Theorem \ref{thm2}. 
\end{proof}

\subsection{Case in which $\mu_{mR} = 0$}

When $\mu_{mR}=0$,  we  use \eqref{eqdotVzm} to conclude that the reduced-order dynamics  \eqref{A13ex}  admits the origin as a GAS equilibrium.  Hence,  using Theorem \ref{thm:gas},  we conclude  GApS of the origin for \eqref{ntwksystex}.   

Now,  we would like to answer the following two questions:

\begin{enumerate}
 \item Are there cases where,  although being  globally practically asymptotically stable,  the origin for  \eqref{ntwksystex} is locally unstable?

\item Under which conditions,   the origin for \eqref{ntwksystex} is locally exponentially stable?  

\item Are we able, to establish global exponential stability?
\end{enumerate}

Consider the linearization of \eqref{ntwksystex} around the origin, which is given by 
\begin{align} \label{eqLin}
\dot{z} = A_\sigma z,  ~ A_\sigma := A_o  - \sigma  L,  ~  A_o := \text{blkdiag} \left\{ \mu_{1}, \mu_2,...,\mu_N   \right\}.
\end{align}

To answer the first question,  in the following example,  we provide a case where the matrix $A_\sigma$ has an eigenvalue with a strictly positive real part for all  $\sigma >0$ sufficiently large.  Hence, we provide a positive answer to the question in 1).   

\begin{example}  \label{expmu}
 Consider a network of two Andronov-Hopf oscillators, $N=2$,   and let $\mu_{R1} = -\mu_{R2} = \mu$, and $\mu_{I1} = \mu_{I2} = 0$.  
 Clearly,  in this case,   $\mu_{mR}=0$.   Note that in this particular case
\begin{align*}
 A_{\sigma} & =  \begin{bmatrix} \mu   & 0\\ 0 &  -\mu  \end{bmatrix} -\sigma L.  
\end{align*}  

Next, we let for simplicity $\mu:=1$ and  $L := \begin{bmatrix} 1  & -1 \\ -1 & 1 \end{bmatrix}$,   so 
\begin{align*}
A_\sigma & 
=  \begin{bmatrix} -\sigma + 1 & \sigma \\ \sigma & - \sigma - 1 \end{bmatrix}. 
\end{align*}  

Direct calculations show that the eigenvalues of the matrix $A_\varepsilon$ are given by  
$$\lambda_{1,2}(A_\sigma) = -\sigma \mp \sqrt{ \sigma^2 + 1}$$ and therefore the matrix $A_\sigma$ has 2 positive eigenvalues for all $\sigma>0$.  Hence,  linearization of the network dynamics is unstable and therefore (see Theorem 4.7, Khalil, 3rd) the origin is unstable for the network of Andronov-Hopf oscillators. 
\end{example}

Next,  to answer the second question,  we consider the following assumption.
\begin{assumption} \label{cl1}
There exists $\sigma^\star > 0$ such that, for all $\sigma \geq \sigma^\star$,  the smallest (in norm) eigenvalue of $A_\sigma$,  denoted by $\lambda_{1} (A_\sigma)$, has a strictly negative real part. 
\end{assumption}

When Assumption \ref{cl1} holds,  we show that the origin for \eqref{ntwksystex} is  locally exponentially stable.  

\begin{proposition} \label{prop3}
Consider the network of Andronov-Hopf oscillators  in \eqref{ntwksystex},  interconnected over a connected directed graph, and such that Assumption \ref{cl1} holds and $\mu_{mR} = 0$.  Then,  there exists  $\sigma^f>0$ such that,  for all 
$\sigma\geq \sigma^f$,  the origin $\{z = 0\}$ is locally exponentially stable. 
\end{proposition}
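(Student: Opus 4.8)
The plan is to reduce the statement to a spectral property of the linearization and then invoke Lyapunov's indirect method. First I would observe that the vector field of \eqref{ntwksystex} is $z\mapsto F(z)-\sigma L z$ and that, in each $f_i$, the nonlinear term $z_i\mapsto -|z_i|^2 z_i$ is a homogeneous cubic in $(\Re z_i,\Im z_i)$, hence has zero Jacobian at $z_i=0$. Therefore the Jacobian of \eqref{ntwksystex} at the origin is exactly $A_\sigma=A_o-\sigma L$ as in \eqref{eqLin}, with $A_o=\text{blkdiag}\{\mu_1,\dots,\mu_N\}$. Since $A_\sigma$ defines a $\mathbb{C}$-linear map on $\mathbb{C}^N\cong\mathbb{R}^{2N}$, whose real realization has eigenvalues $\lambda$ and $\bar\lambda$ for each eigenvalue $\lambda$ of $A_\sigma$, local exponential stability of the origin for \eqref{ntwksystex} will follow from \cite[Theorem 4.7]{KHALIL96} as soon as $A_\sigma$ is shown to be Hurwitz for all sufficiently large $\sigma$.

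To that end I would block-decompose $A_\sigma$ using the similarity transformation $U=[1_N~V]$ of \eqref{eqU}, which puts $L$ in the form \eqref{A3}. Since $A_o$ is (block-)diagonal and $v_l^\top A_o 1_N=\sum_i v_{li}\mu_i=\mu_m$, one gets
\begin{equation*}
U^{-1}A_\sigma U=\begin{bmatrix} \mu_m & v_l^\top A_o V\\ V^\dag A_o 1_N & V^\dag A_o V-\sigma\Lambda\end{bmatrix}.
\end{equation*}
For $\sigma$ large the $(2,2)$ block $V^\dag A_o V-\sigma\Lambda$ is dominated by $-\sigma\Lambda$, whose eigenvalues are $-\sigma$ times the nonzero eigenvalues of $L$; after Remark \ref{rmk1} these have real part tending to $-\infty$, so the block is Hurwitz once $\sigma$ is large. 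A standard Schur-complement argument then shows that $A_\sigma$ has $N-1$ eigenvalues that approach those of $V^\dag A_o V-\sigma\Lambda$ (hence lie in the open left half-plane for $\sigma$ large) and one remaining bounded eigenvalue $\lambda$ solving $\lambda=\mu_m-v_l^\top A_o V\,(V^\dag A_o V-\sigma\Lambda-\lambda I_{N-1})^{-1}V^\dag A_o 1_N$, whence, the inverse being $O(1/\sigma)$, $\lambda\to\mu_m=i\mu_{mI}$ as $\sigma\to\infty$ because $\mu_{mR}=0$. In particular, for $\sigma$ large this bounded eigenvalue is the smallest-in-norm eigenvalue of $A_\sigma$, i.e. it is $\lambda_1(A_\sigma)$ in the sense of Assumption \ref{cl1}, and by that assumption $\Re\{\lambda_1(A_\sigma)\}<0$ for all $\sigma\geq\sigma^\star$.

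Putting the two parts together, there is $\sigma^f\geq\sigma^\star$ such that, for all $\sigma\geq\sigma^f$, every eigenvalue of $A_\sigma$ has strictly negative real part; thus $A_\sigma$ is Hurwitz and the conclusion follows from \cite[Theorem 4.7]{KHALIL96}. The main obstacle I anticipate is the careful bookkeeping of this spectral splitting — making precise that the $N-1$ ``fast'' eigenvalues are already strictly inside the left half-plane and that the unique ``slow'' eigenvalue is exactly the one controlled by Assumption \ref{cl1}. This is precisely why Assumption \ref{cl1} cannot be removed: unconditionally one only knows that the slow eigenvalue approaches the imaginary axis (its limit $i\mu_{mI}$ lies on it), so it could, in principle, have nonnegative real part for every finite $\sigma$, exactly as Example \ref{expmu} shows can happen.
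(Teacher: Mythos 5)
Your proposal is correct, and it follows the strategy the paper sets up around \eqref{eqLin}: linearize at the origin (the cubic terms contribute nothing to the Jacobian), split the spectrum of $A_\sigma$ into $N-1$ ``fast'' eigenvalues driven into the open left half-plane by $-\sigma\Lambda$ and one bounded ``slow'' eigenvalue converging to $\mu_m=i\mu_{mI}$, identify the latter with $\lambda_1(A_\sigma)$ of Assumption \ref{cl1}, and conclude by Lyapunov's indirect method. The paper does not print a proof of Proposition \ref{prop3}; its intended mechanism is visible in the appendix and in the proof of Lemma \ref{lem6}, where the same splitting is obtained by writing $A_\sigma=A_\varepsilon/\varepsilon$ with $\varepsilon=1/\sigma$ and invoking Lidskii's theorem (Lemma \ref{lemregpert}) together with the spectral-resolution/continuity lemmas, which give the slow eigenvalue as $\varepsilon\mu_m+o(\varepsilon)$ for $A_\varepsilon$ and the remaining eigenvalues near those of $-\Lambda$. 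Your similarity transformation by $U$ plus a Schur-complement computation is a more elementary, self-contained substitute for that machinery, at the price of the bookkeeping you yourself flag: you still need to justify that exactly one eigenvalue of $A_\sigma$ stays bounded (and hence is eventually the smallest in norm) while the other $N-1$ escape with real parts like $-\sigma\,\Re\lambda_j(\Lambda)$. This is standard and can be discharged either by a Rouch\'e/continuity-of-eigenvalues argument applied to $\varepsilon A_\sigma\to\mathrm{blkdiag}\{0,-\Lambda\}$, or simply by citing Lemma \ref{lemregpert} as the paper does elsewhere; with that step made explicit, your argument is complete. Your closing observation that Assumption \ref{cl1} is exactly what decides the sign of the slow eigenvalue (whose first-order limit sits on the imaginary axis when $\mu_{mR}=0$) matches the role the paper assigns to it via Example \ref{expmu}.
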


In the following example, we modify Example \ref{expmu} so that Assumption \ref{cl1} holds. 

 \begin{example}
  Consider a network of two Andronov-Hopf oscillators, $N=2$,   and let $\mu_{R1} = -\mu_{R2} = 1$, and $\mu_{I1} = - \mu_{I2} = 2$.  
 Clearly,  in this case,   $\mu_{mR}=0$.   Note that in this particular case
\begin{align*}
 A_{\sigma} & =  \begin{bmatrix} 1 - 2 i   & 0\\ 0 &  -1 + 2 i  \end{bmatrix} -\sigma L.  
\end{align*}  
Next, we let  $L := \begin{bmatrix} 1  & -1 \\ -1 & 1 \end{bmatrix}$ to obtain 
\begin{align*}
A_\sigma 
=  \begin{bmatrix} -\sigma + 1 - 2i & \sigma \\ \sigma & - \sigma - 1 + 2i \end{bmatrix}. 
\end{align*}  
Direct calculations show that the characteristic polynomial of 
$A_\sigma$ is given by 
$$  P(\lambda) := \lambda^2  + 2 \sigma \lambda + (3 + 4i).   $$

The roots of the polynomial $P$  are the eigenvalues of the complex matrix $A_\sigma$ and are given by
$$  \lambda_{1,2} := - \sigma \pm \sqrt{\sigma^2 - 3 - 4i}.  $$
As a result,  the real parts of $ \lambda_{1,2} $ are given by 
$$ \Re(\lambda_{1,2}) := - \sigma \pm \frac{  \sqrt{ (\sigma^2 - 3) + \sqrt{ (\sigma^2 - 3)^2 + 16   }    }     }{\sqrt{2}}.   $$
Clearly,  $\Re(\lambda_{1,2})$ are strictly negative for $\sigma > 0$ sufficiently large.   Hence,  linearization of the network dynamics is exponentially stable. 
 \end{example}

The third question is an open question to the best of our knowledge.  Indeed,  since we already know that the origin for \eqref{ntwksystex} is GApS,  one way to give a positive answer for the third question is by showing  that the origin for \eqref{ntwksystex},   which  is already locally exponentially stable under  Assumption \ref{cl1},  has a basin of attraction that does not shrink as $\sigma$ get bigger or shrinks at a specific rate.   However, we are not, so far, able to establish this property.


\section{Conclusion}  \label{sec:concl}

We presented a framework to analyse networks of heterogeneous nonlinear systems. Our approach allows to qualitatevely characterize the collective behavior for ``large'' values of the coupling gains.  The proposed approach, however, does not give much information about the quantitative properties of such behavior.   In particular,  we do not provide explicit values of the coupling strength under which the networked system exhibit the established behavior.   Characterizing the emergent behavior,  such as orbital asymptotic stability,  both qualitatively and  quantitatively (in terms of the coupling gain) is still an open problem.  
Furthermore,  beyond the analysis problems solved in this paper,  the control design problem is widely open.  To find conditions under which a network of heterogeneous systems may be controlled so that it admits a {\it desired} reduced-order dynamics.  
Finally,  we believe that extending the proposed framework for general classes of nonlinear systems such as hybrid systems  is an interesting perspective as well.

\section*{Appendix I: Background}

\subsection*{Distortion of the solutions to singularly perturbed systems}

We recall a version of the well-known Tikhonov Theorem   \cite[Theorem 9.1]{KHALIL96}, originally published in \cite{tikhonov},   which allows to approximate the trajectories of a singularly perturbed system by the trajectories of its unperturbed form,  along a finite interval of time. 
 
\begin{lemma}[Tikhonov Theorem]  \label{thmtykho} 
Consider the singularly perturbed system 
\begin{equation}
\label{eqsingper}
\begin{aligned} 
\dot{z} & =  f(z,e,\varepsilon) \\
\varepsilon \dot{e} & =  g(z,e,\varepsilon)   \qquad (z,e) \in  \mathbb{R}^{m_z} \times \mathbb{R}^{m_e}.
\end{aligned}
\end{equation}
  Assume that there exist  $t_1$, $r$,  $\rho$,  and 
$\varepsilon_o > 0$ such that, for each $(z,e,\varepsilon) \in  B_r \times B_\rho \times [0,\varepsilon_o]$, we have 
\begin{itemize}
\item the functions $f$, $g$, and their first partial derivatives with respect to $(z,e,\varepsilon)$ are continuous. 
\item the function $e = h(z)$, solution to the equation $0=g(z,e,0)$, and the Jacobian matrix $ \frac{\partial g(z,e,0)}{\partial e^\top}$ have continuous first partial derivatives with respect to their arguments.
\item the unperturbed system  
\begin{align} \label{eqbarz}
\dot{\bar z} = f(\bar z , h(\bar z),0) 
\end{align}
has unique solutions on $[0,t_1]$ when starting from $B_r$. Furthermore,  there exists $r_1 \geq  r > 0$ such that, for each initial condition $\bar{z}_o \in B_{r}$,  the solution $\bar{z}(t)$ to \eqref{eqbarz} starting from $\bar{z}_o$ satisfies    
$$ \left|\bar z(t) \right| \leq r_1  \qquad  \forall t \in [0, t_1].  $$
\item the origin of the boundary-layer system 
\begin{align} \label{eqbly}
y'= g(z,y+h(z),0) 
\end{align}
is exponentially stable, uniformly in $z$. 
\end{itemize}

Then,  there exist $\mu^*>0$,  $\varepsilon^* \in (0,\varepsilon_o]$,  and $M > 0$ such that,  
for each  $\varepsilon \in (0, \varepsilon^*)$,  the solution $(z(t),e(t))$ to \eqref{eqsingper} starting from $(z_o,e_o) \in B_r \times B_\rho$,  the solution $\bar{z}(t)$ to \eqref{eqbarz} starting from 
$z_o$, and the solution $y(t)$ to \eqref{eqbly} starting from  $e_o - h(z_o)$ with $ \left| e_o - h(z_o) \right| \leq \mu^*$  satisfy,  for each $t \in [0 , t_1]$,
\begin{equation}  \label{eqbound}
\begin{aligned}
	\left| z(t) - \bar z(t) \right| & \leq  M \varepsilon \nonumber \\
	\left| e(t) - h(\bar z(t)) - y\left( \frac{t}{\varepsilon} \right) \right| & \leq M \varepsilon.
\end{aligned}
\end{equation}
\end{lemma}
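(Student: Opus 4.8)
The plan is to run the classical two-time-scale argument: pass to boundary-layer coordinates so that the fast subsystem acquires an $\varepsilon$-independent equilibrium at the origin, build a converse Lyapunov function for the boundary-layer system \eqref{eqbly} (which is exponentially stable uniformly in $z$), and then close two Gr\"onwall-type estimates, one for the slow error $z(t)-\bar z(t)$ and one for the fast error $e(t)-h(\bar z(t))-y(t/\varepsilon)$.

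First I would introduce the shifted fast variable $\eta:=e-h(z)$. Using $0=g(z,h(z),0)$ together with the smoothness hypotheses, the transformed dynamics read
\begin{align*}
\dot z &= f\big(z,\eta+h(z),\varepsilon\big),\\
\varepsilon\dot\eta &= g\big(z,\eta+h(z),0\big)+\varepsilon\,G(z,\eta,\varepsilon),
\end{align*}
where $G$ gathers the $O(\varepsilon)$ terms produced by $\partial f/\partial\varepsilon$, $\partial g/\partial\varepsilon$ and $-\tfrac{\partial h}{\partial z}\dot z$, and is continuous and bounded on the relevant compact region. The leading term $g(z,\eta+h(z),0)$ is exactly the right-hand side of \eqref{eqbly} (with $y=\eta$) and vanishes at $\eta=0$. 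A standard bootstrap will keep $(z(t),\eta(t))$ inside the region where all hypotheses hold, for $\varepsilon$ small and $t\in[0,t_1]$, so that all the bounds below are legitimate.

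Next, for the slow error set $\tilde z:=z-\bar z$ and subtract \eqref{eqbarz}; the difference splits into $f(z,\eta+h(z),\varepsilon)-f(z,h(z),0)=O(|\eta|)+O(\varepsilon)$ and $f(z,h(z),0)-f(\bar z,h(\bar z),0)=O(|\tilde z|)$, by local Lipschitzness. Hence $|\dot{\tilde z}|\le L\big(|\tilde z|+|\eta|+\varepsilon\big)$, and Gr\"onwall gives $|\tilde z(t)|\le e^{Lt_1}\big(L\int_0^t|\eta(s)|\,ds+L\varepsilon t_1\big)$ on $[0,t_1]$. It thus remains to show $\int_0^t|\eta(s)|\,ds=O(\varepsilon)$, which is where the Lyapunov function enters: by the converse theorem for uniform exponential stability there is $W(z,\eta)$ with $c_1|\eta|^2\le W\le c_2|\eta|^2$, $\tfrac{\partial W}{\partial\eta}g(z,\eta+h(z),0)\le-c_3|\eta|^2$, $|\partial W/\partial\eta|\le c_4|\eta|$, and---using smoothness of $h$ and $g$ in $z$---$|\partial W/\partial z|\le c_5|\eta|^2$. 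Differentiating $W$ along the transformed system, using boundedness of $\dot z$ and of $G$ and completing the square on the term $c_4|\eta|\,|G|$, yields $\dot W\le-\tfrac{\alpha}{\varepsilon}W+\beta\varepsilon$ for $\varepsilon$ small; integrating, $|\eta(t)|\le k_1 e^{-\alpha t/2\varepsilon}|\eta(0)|+k_2\varepsilon$, so $\int_0^t|\eta(s)|\,ds\le\tfrac{2k_1\varepsilon}{\alpha}|\eta(0)|+k_2\varepsilon t_1=O(\varepsilon)$ provided $|\eta(0)|=|e_o-h(z_o)|\le\mu^*$. This gives the first bound $|z(t)-\bar z(t)|\le M\varepsilon$.

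For the fast estimate I would compare $\eta(t)$ with the boundary-layer solution $y(\tau)$ of \eqref{eqbly}, $\tau=t/\varepsilon$, started from $e_o-h(z_o)$---i.e. with $z$ frozen at $z_o$. In stretched time the difference $\delta(\tau):=\eta(\varepsilon\tau)-y(\tau)$ obeys $\delta'=g(z(\varepsilon\tau),\eta+h,0)-g(z_o,y+h(z_o),0)+\varepsilon G$; isolating $g(z(\varepsilon\tau),\eta+h,0)-g(z_o,\eta+h(z_o),0)=O(|z(\varepsilon\tau)-z_o|)=O(\varepsilon\tau)$ and absorbing the remaining $O(|\delta|)$ term via a second Gr\"onwall/Lyapunov estimate built on the exponential stability of the boundary-layer flow, one gets $|\delta(\tau)|=O(\varepsilon)$ uniformly for $\tau\in[0,t_1/\varepsilon]$ (the apparently dangerous factor $\varepsilon\tau$ is harmless because it is integrated against the exponential decay of $y$). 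Finally, since $e(t)-h(\bar z(t))-y(t/\varepsilon)=\delta(t/\varepsilon)+\big(h(z(t))-h(\bar z(t))\big)$ and $|h(z(t))-h(\bar z(t))|\le L_h|\tilde z(t)|=O(\varepsilon)$ from the slow estimate, the second bound in \eqref{eqbound} follows. The main obstacle is exactly this fast comparison step: over the $O(\varepsilon|\ln\varepsilon|)$ window during which $y(t/\varepsilon)$ is not yet negligible the slow variable has already drifted and the $\varepsilon G$ perturbation is acting, and one must check these contributions aggregate to only $O(\varepsilon)$---this is where the hypothesis of exponential stability \emph{uniform in $z$} is indispensable.
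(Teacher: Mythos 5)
You should first be aware that the paper contains no proof of this lemma: it is recalled verbatim as Tikhonov's theorem, with an explicit citation to \cite[Theorem 9.1]{KHALIL96} (originally \cite{tikhonov}), so there is no in-paper argument to compare against. Your proposal is essentially a reconstruction of the standard textbook proof of that cited result: shift to $\eta=e-h(z)$, use a converse Lyapunov function for the boundary layer (exponential stability uniform in $z$), obtain $|\eta(t)|\le k_1e^{-\alpha t/2\varepsilon}|\eta(0)|+k_2\varepsilon$, close the slow estimate by Gr\"onwall, and compare $\eta$ with the frozen-$z$ boundary-layer solution in stretched time. That is the right route.

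However, the step you yourself flag as the main obstacle is not closed as written, and this is a genuine gap. You bound the frozen-$z$ mismatch by $g(z(\varepsilon\tau),\eta+h(z(\varepsilon\tau)),0)-g(z_o,\eta+h(z_o),0)=O(|z(\varepsilon\tau)-z_o|)=O(\varepsilon\tau)$ and claim this is harmless because it is ``integrated against the exponential decay of $y$''; but the decay of $y$ does not appear in the bound you actually wrote, and an inhomogeneity of size $C\varepsilon s$ fed into the contraction estimate gives $\int_0^\tau e^{-c(\tau-s)}C\varepsilon s\,ds\approx C\varepsilon\tau/c$, which at $\tau=t_1/\varepsilon$ is $O(1)$, not $O(\varepsilon)$. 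What rescues the argument is that $\psi(z,y):=g(z,y+h(z),0)$ vanishes identically at $y=0$, so the mismatch factors as $\psi(z,y)-\psi(z_o,y)=\bigl\{\int_0^1\bigl[\tfrac{\partial\psi}{\partial y}(z,\theta y)-\tfrac{\partial\psi}{\partial y}(z_o,\theta y)\bigr]d\theta\bigr\}\,y$, carrying the decaying factor $|y|$ explicitly; to turn the bracket into $O(|z-z_o|)$ you need $\partial\psi/\partial y$ Lipschitz in $z$ on the compact region, which is exactly what the second hypothesis (continuous first partials of $h$ and of $\partial g/\partial e^\top$) supplies---mere continuity of the first partials of $g$ would only yield a class-$\mathcal K$ modulus and an $o(1)$, not $O(\varepsilon)$, error. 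With that factorization the drift term is pointwise of order $\varepsilon\tau e^{-a\tau}=O(\varepsilon)$ and your second Gr\"onwall closes; you should also record that $\eta$ decays only to an $O(\varepsilon)$ residual (not to zero), which is harmless but must appear in the bookkeeping. The remainder of your outline (the slow estimate with $\int_0^{t_1}|\eta|=O(\varepsilon)$ under $|\eta(0)|\le\mu^*$, the converse-Lyapunov bounds including $|\partial W/\partial z|\le c_5|\eta|^2$, and the bootstrap keeping the solution in the region where the hypotheses hold) matches the cited proof.
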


\begin{remark} \label{RemThyk}
Note that system \eqref{singpertsyst} verifies all smoothness conditions in the first item of Lemma \ref{thmtykho} after Item (i) in Lemma \ref{fact1}.  We also know that the trajectories of both \eqref{singpertsyst} and \eqref{redsyst} exist and are uniquely defined on the interval $[0, \infty]$.   Furthermore,  the boundary-layer model \eqref{eqbly},  in the case of \eqref{singpertsyst},  is given by the linear system
 $ \dot{e}_v = - ( \Lambda \otimes I_n) e_v,  $
 which is exponentially stable under Item (iv) in Lemma \ref{fact1}.  Finally,  under lemma \ref{lem0},  we know the existence of $r \geq 0$ such that the set 
$$ B_r:= \left\{ (x_m,e_v) \in \mathbb{R}^n \times \mathbb{R}^{n(N-1)} : \left|(x_m,e_v)\right| \leq r \right\} $$
is globally attractive.  Thus,  all the Items in Lemma \ref{thmtykho} are satisfied.  
\end{remark}

\subsection*{Existence of periodic orbits for singularly perturbed systems}

The following result,  which is a consequence of  the main statements in \cite{anosov1960limit}, establishes the existence of periodic solutions for singularly-perturbed systems.

\begin{lemma} \label{thmAnosov}
Consider the singularly perturbed system \eqref{eqsingper} such that the following properties hold:
\begin{enumerate}
\item the functions $f$ and $g$ are continuous with respect to $(z,e,\varepsilon)$  and differentiable with respect to $z$ and $e$.  Moreover, the derivatives of $f$ and $g$ with respect to $z$ and $e$ depend continuously on  $(z,e,\varepsilon)$. 
\item there is a unique function  $h : \mathbb{R}^{m_z} \rightarrow \mathbb{R}^{m_e}$ such that $g(z,h(z),0)=0$.
\item the equilibrium state $ y = 0$ of \eqref{eqbly} is hyperbolic uniformly in $z$.
\item the unperturbed system \eqref{eqbarz} has a nontrivial  nonsingular periodic orbit $\gamma_o \subset \mathbb{R}^{m_z}$. 
\end{enumerate} 

Then,  there exists $\rho_o > 0$ and a class $\mathcal{K}$ function 
$\varepsilon_o$ such that for each $\rho \in (0,\rho_o]$ and for each $\varepsilon \leq \varepsilon_o(\rho)$,  the system \eqref{eqsingper}  has a unique nontrivial periodic orbit $\Gamma_\varepsilon$,  which is strictly contained in the $\rho$-neighborhood of $\Gamma_o$, where 
$$ \Gamma_o := \{ (z,e) \in \mathbb{R}^{m_z} \times \mathbb{R}^{m_e} :  z \in \gamma_o ~ \text{and} ~ e = h(z) \}.  $$
 Moreover, the period $\alpha_\varepsilon$ of the periodic solution to \eqref{eqsingper} generating the orbit $\Gamma_\varepsilon$  tends to $\alpha_o$ the period of the solution to  \eqref{eqbarz} generating   the orbit $\Gamma_o$. 
\end{lemma}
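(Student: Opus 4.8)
The plan is to prove Lemma \ref{thmAnosov} by reducing it to the classical Anosov results on periodic orbits of singularly perturbed systems \cite{anosov1960limit}, after a preliminary normalization that places the hypotheses in the standard form required there. First I would verify that hypotheses (1)--(3) are exactly the standing assumptions under which the reduced slow manifold $\{e = h(z)\}$ is well-defined and normally hyperbolic: by (2) the algebraic equation $g(z,e,0)=0$ has the unique root $e=h(z)$, and by (3) the matrix $\frac{\partial g}{\partial e^\top}(z,h(z),0)$ is hyperbolic uniformly in $z$; the implicit function theorem together with the continuity of the derivatives in (1) then gives that $h$ is $C^1$. On this manifold the slow dynamics is precisely $\dot{\bar z} = f(\bar z, h(\bar z),0)$, i.e. \eqref{eqbarz}, which by (4) possesses a nontrivial nonsingular periodic orbit $\gamma_o$ of period $\alpha_o$.

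The core of the argument is a Poincar\'e–map / fixed-point construction, which is the content of Anosov's theorem. I would set up a cross-section $\Sigma$ transverse to the lifted orbit $\Gamma_o = \{(z,e): z\in\gamma_o,\ e=h(z)\}$ in the full phase space $\mathbb{R}^{m_z}\times\mathbb{R}^{m_e}$, and study the first-return map $\mathcal P_\varepsilon$ of \eqref{eqsingper} on $\Sigma$ for small $\varepsilon>0$. For $\varepsilon = 0$ the fast variable is slaved instantaneously to $h(z)$, so the return map degenerates to the Poincar\'e map of the reduced system on $\gamma_o\cap\Sigma$, which has a fixed point; nonsingularity of $\gamma_o$ means the characteristic multiplier $1$ is simple, i.e. the linearization $D\mathcal P_0$ of the \emph{reduced} return map has $1$ as a simple eigenvalue, equivalently $I - D\mathcal P_0$ restricted to the complement of the flow direction is invertible. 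Tikhonov's theorem (Lemma \ref{thmtykho}, available since its smoothness hypotheses follow from (1) as noted) gives that $\mathcal P_\varepsilon \to \mathcal P_0$ (and its derivative converges) uniformly on compacts as $\varepsilon\to 0$, with an $O(\varepsilon)$ estimate on a finite time interval covering roughly one period. An implicit-function-theorem / contraction argument in the transverse directions, combined with the exponential contraction in the fast $e$-directions provided by hyperbolicity in (3), then yields for each small $\varepsilon$ a unique fixed point $p_\varepsilon$ of $\mathcal P_\varepsilon$ near $p_0$, hence a unique periodic orbit $\Gamma_\varepsilon$, with $p_\varepsilon \to p_0$ and period $\alpha_\varepsilon\to\alpha_o$. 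The uniqueness "strictly inside the $\rho$-neighborhood of $\Gamma_o$" and the quantitative coupling $\varepsilon \le \varepsilon_o(\rho)$ with $\varepsilon_o\in\mathcal K$ come out of tracking the $\varepsilon$-dependence of the radius on which the contraction is valid: shrinking the admissible $\varepsilon$ shrinks the localization radius monotonically, which is precisely the class-$\mathcal K$ estimate claimed.

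The steps in order are therefore: (a) apply the implicit function theorem to extract the $C^1$ slow manifold $h$ and record the reduced system \eqref{eqbarz}; (b) verify the Tikhonov hypotheses via Remark \ref{RemThyk} and obtain the finite-time approximation of solutions of \eqref{eqsingper} by those of \eqref{eqbarz} plus the boundary-layer correction; (c) build the Poincar\'e section transverse to $\Gamma_o$ and show the return map $\mathcal P_\varepsilon$ and its derivative converge to the reduced return map $\mathcal P_0$ as $\varepsilon\to 0$; (d) use nonsingularity of $\gamma_o$ to get invertibility of $I-D\mathcal P_0$ on the transverse subspace and apply a uniform implicit-function/contraction argument to produce the unique fixed point $p_\varepsilon$, hence $\Gamma_\varepsilon$; (e) translate the localization radius into the class-$\mathcal K$ bound $\varepsilon_o(\rho)$ and read off $\alpha_\varepsilon\to\alpha_o$. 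Since all of this is, modulo bookkeeping, exactly the statement and proof strategy of the relevant theorems in \cite{anosov1960limit}, the cleanest route is simply to cite those results and check the correspondence of hypotheses; the only point requiring genuine care is ensuring the Poincar\'e return map is well-defined and smooth \emph{uniformly} down to $\varepsilon=0$ despite the singular nature of the limit — that is, that the fast transient does not destroy transversality to $\Sigma$ — which is handled by the boundary-layer exponential stability in (3) together with the Tikhonov estimate.

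I expect the main obstacle to be this uniform-in-$\varepsilon$ regularity of the Poincar\'e map near the singular limit $\varepsilon=0$: one must show not merely that solutions converge (Tikhonov) but that the return time and return point depend $C^1$-smoothly on the initial condition with derivatives that remain bounded as $\varepsilon\downarrow 0$, so that the implicit function theorem can be applied with constants uniform in $\varepsilon$. This is exactly where the cited work of Anosov does the heavy lifting, and in a self-contained write-up it would require combining the $O(\varepsilon)$ trajectory estimate with the exponential stability of the boundary layer to control the variational equation along the orbit; everything else — the existence of the reduced periodic orbit, the simplicity of the multiplier $1$, the final passage to $\Gamma_\varepsilon\to\Gamma_o$ and $\alpha_\varepsilon\to\alpha_o$ — is comparatively routine.
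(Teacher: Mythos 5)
Your proposal matches the paper's treatment: the paper gives no internal proof of this lemma, presenting it purely as a consequence of the main statements of Anosov \cite{anosov1960limit}, which is exactly the cite-and-check-the-hypotheses route you settle on. One caution about your self-contained sketch: hypothesis (3) only gives hyperbolicity of the boundary layer, so the Tikhonov estimate of Lemma \ref{thmtykho} (which requires exponential stability, not mere hyperbolicity) is not available in general, and the uniform-in-$\varepsilon$ control of the return map must instead come from Anosov's dichotomy-type estimates rather than from ``exponential contraction in the fast directions.''
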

 
\subsection*{Input-to-state stability of compact disconnected subsets} 

  In this section,  we recall a version of \cite[Theorem 1]{angeli2015characterizations} on the characterization of input-to-state stability of compact disconnected subsets for nonlinear systems of the form 
\begin{align} \label{eqSysIss} 
\dot{x} = f(x,d) \qquad (x,d) \in \mathbb{R}^n \times D,  
\end{align}
where $D \subset \mathbb{R}^{m_d}$ is a closed subset and $f : \mathbb{R}^n \times D \rightarrow \mathbb{R}^n$ is continuously differentiable.   Consider a compact subset  $\omega \subset \mathbb{R}^n$ that is invariant for the unperturbed system 
\begin{align} \label{eqSysIss1}
\dot{x} = f(x,0).
\end{align}

The attracting and repulsing subsets of $\omega$ are given by
\begin{align*}
U(\omega) := \{ x_o \in \mathbb{R}^n :  \lim_{t \rightarrow + \infty} 
|x(t)|_\omega  = 0 \},
\\
R(\omega) := \{  x_o \in \mathbb{R}^n :  \lim_{t \rightarrow - \infty} 
|x(t)|_\omega  = 0 \}.
\end{align*}
Furthermore,  we consider the following assumption.

\begin{assumption} \label{assdisco-}
The set $\omega$ is the union of disconnected and nonempty subsets $\{ \omega_i \}^k_{i=1} \subset \mathbb{R}^n$, for some $k \in \mathbb{N}$.  
\end{assumption}

\begin{definition}
We say that $\omega_i < \omega_j$,  for some  $i$, $j \in \{1,2,...,k \}$,  if $U(\omega_i) \cap R(\omega_j) \neq \emptyset$, which implies the existence of a solution to \eqref{eqSysIss1} relating $\omega_i$ to $\omega_j$.  
\end{definition}

 Next, we introduce the following  properties:
\begin{itemize}
\item An $r$-cycle $(r \in \{2,3,...,k\})$ is an ordered $r$-tuple of distinct indices  $i_1,i_2,...,i_r \subset \{1,2,...,k\}$ such that $\omega_{i_1} < \omega_{i_2} < ....  < \omega_{i_r} < \omega_{i_1}$. 
\item A 1-cycle is an index $i \in \{1,2,...,k\}$ such that $[ R(\omega_i) \cap U(\omega_i) ] - \omega_i \neq \emptyset$.
\item A filtration ordering is a numbering of the $\omega_i$s so that $\omega_i < \omega_j$ implies $i \leq j$. 
\end{itemize}

The result in \cite[Theorem 1]{angeli2015characterizations} is based on the following assumption:

\begin{assumption} \label{assdisco}
The decomposition of $\omega$ into $\omega := \bigcup^{k}_{i=1}  \omega_i$ has no cycles and  forms a filtration ordering of $\omega$. 
\end{assumption} 

\begin{lemma} [Angeli-Effimov Theorem] \label{lemAE}
Consider the nonlinear system in \eqref{eqSysIss} and let $\omega$ be a compact subset that is invariant for the unperturbed system 
$\dot{x} = f(x,0)$ such that Assumptions \ref{assdisco-}-\ref{assdisco} hold.   Then,   the following two properties are equivalent.

\begin{itemize}
\item The system enjoys the asymptotic gain property; namely,   there exists a class $\mathcal{K}_\infty$ function $\eta$ such that each solution $x$ to \eqref{eqSysIss} satisfies 
$$ \limsup_{t \rightarrow + \infty} |x(t)|_\omega \leq \eta(|d|_\infty).  $$
\item The system admits an input-to-state Lyapunov function; namely,   there exists a $\mathcal{C}^1$ function $ V : \mathbb{R}^n \rightarrow \mathbb{R}_{\geq 0}$,   class $\mathcal{K}_{\infty}$ functions $\alpha, \bar{\alpha}, \underline{\alpha},  \gamma$, and a positive constant $c \geq 0$, such that
$$ \underline{\alpha}\left( \left|x\right|_{\omega} \right) \leq V(x) \leq \bar{\alpha} \left( \left|x\right|_{\omega}+c \right)  $$  
and
$$  \frac{\partial V}{\partial x^\top}(x) F(x,d)  \leq - \alpha 
\left( \left| x \right|_{\omega} \right) + \gamma (|d|)  \qquad \forall x \in \mathbb{R}^n.   $$
\end{itemize}
\end{lemma}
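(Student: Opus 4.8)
The statement is an equivalence, so the plan is to prove the two implications separately, the second being the substantive one.

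First I would dispatch ``ISS-Lyapunov function $\Rightarrow$ asymptotic gain'' by a direct comparison/sublevel-set argument. Fixing a solution $x(\cdot)$ with input $d$ and assuming $\|d\|_\infty<\infty$, I would set $\ell := \alpha^{-1}\!\big(2\gamma(\|d\|_\infty)\big)$; then $|x(t)|_\omega \ge \ell$ forces $\tfrac{\partial V}{\partial x^\top}(x(t))f(x(t),d(t)) \le -\tfrac12\alpha(|x(t)|_\omega)$, so $V$ strictly decreases off the compact sublevel set $\{V\le\bar\alpha(\ell+c)\}$. Combined with the sandwich bounds $\underline\alpha(|x|_\omega)\le V(x)\le\bar\alpha(|x|_\omega+c)$, this gives $\limsup_{t\to\infty}|x(t)|_\omega \le \underline\alpha^{-1}\circ\bar\alpha(\ell+c)$, from which one reads off a $\mathcal K_\infty$ gain $\eta$ (the fixed constant $c$ only enlarges $\eta$ near the origin, which is harmless since the bound is asymptotic).

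For the converse, ``asymptotic gain $\Rightarrow$ ISS-Lyapunov function'', I would follow the Sontag--Wang program for ISS characterizations in the form extended to disconnected invariant sets by Angeli and Efimov, in three stages. (i) \emph{From AG to ISS.} Using Assumptions \ref{assdisco-}--\ref{assdisco} --- the decomposition $\omega=\bigcup_i\omega_i$ into disconnected pieces, the absence of cycles, and the filtration ordering --- I would first show that $\omega$ is Lyapunov stable for $\dot x = f(x,0)$ and then upgrade the asymptotic-gain estimate to a full ISS bound $|x(t)|_\omega \le \beta(|x_0|_\omega,t) + \gamma(\|d\|_\infty)$ with $\beta\in\mathcal{KL}$; the no-cycle/filtration hypothesis is exactly what rules out solutions drifting among the components for arbitrarily long times, which is the only obstruction to uniformity in time. (ii) \emph{Separate the two effects.} Using causality and the semigroup property of solutions, I would split this estimate into a $0$-GAS statement for $\omega$ under $\dot x = f(x,0)$ and a perturbation estimate in $\|d\|_\infty$. (iii) \emph{Converse Lyapunov construction.} Applying the smooth converse Lyapunov theorem for asymptotically stable compact sets to the unperturbed flow gives a $\mathcal C^1$ function $W$ with $\langle\nabla W,f(x,0)\rangle \le -\alpha_0(|x|_\omega)$ and $\underline\alpha_0(|x|_\omega)\le W(x)\le\bar\alpha_0(|x|_\omega)$, which I would then robustify --- using continuity of $f$ in $d$ and a standard $\mathcal{KL}$-to-$\mathcal K_\infty$ reshaping --- into the desired $V$, the additive constant $c$ being absorbed into a crude envelope of $|x|_\omega$ near $\omega$ (or simply $c=0$).

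The step I expect to be the main obstacle is stage (i): promoting a bare asymptotic-gain property into a uniform (in time and in initial condition) $\mathcal{KL}$ estimate when $\omega$ is \emph{disconnected}. For a single compact attractor this is the classical ISS superposition theorem, but the disconnected case needs the combinatorial argument on the reachability graph of the $\omega_i$'s --- ordering them by the filtration, showing no solution can shuttle between incomparable components, and then assembling a $\mathcal{KL}$ bound block by block. Everything downstream --- the converse Lyapunov theorem and the robustification --- is by now routine. Alternatively, since the present statement is a mild specialization of \cite[Theorem~1]{angeli2015characterizations} (here $f$ is continuously differentiable and a single additive constant $c$ suffices), one may simply invoke that reference.
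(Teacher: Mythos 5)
The paper itself offers no proof of this lemma: it is recalled verbatim as background from \cite[Theorem 1]{angeli2015characterizations}, so your closing remark---that one may simply invoke that reference---is exactly what the paper does, and your first implication (ISS-Lyapunov function $\Rightarrow$ asymptotic gain) is a routine comparison argument that is fine as sketched.

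Your standalone sketch of the converse, however, has a genuine gap at stage (i). You propose to first show that $\omega$ is Lyapunov stable for $\dot x=f(x,0)$ and then upgrade the asymptotic-gain property to a full estimate $|x(t)|_\omega\le\beta(|x_0|_\omega,t)+\gamma(\|d\|_\infty)$ with $\beta\in\mathcal{KL}$. In the setting of this lemma that first step is false in general: $\omega$ is a \emph{disconnected} union whose components may include an unstable equilibrium or a homoclinic structure---indeed that is precisely how the lemma is used in this paper, where $\omega=\gamma_o\cup\{0\}$ with the origin repulsive, or $\gamma_o\cup\gamma_1$. Such a union is globally attractive yet not Lyapunov stable: a solution starting at distance $\delta$ from the unstable component leaves its neighborhood and only later settles onto the stable one, so no $\mathcal{KL}$ bound in $|x_0|_\omega$ can hold, and the asymptotic-gain property for a decomposable $\omega$ does not imply the classical ISS superposition estimate. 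This is exactly why Angeli and Efimov work with the relaxed ``practical'' Lyapunov characterization in which the upper bound is $\bar\alpha(|x|_\omega+c)$ with $c\ge0$: the offset $c$ is what decouples the dissipation inequality from stability of $\omega$. For the same reason your parenthetical suggestion that one could take $c=0$ cannot work, since $V(x)\le\bar\alpha(|x|_\omega)$ together with the decay estimate would force Lyapunov stability of $\omega$, contradicting the presence of the unstable component. The no-cycle and filtration assumptions do play the role you assign them (ruling out recurrent shuttling between components), but they enter the Angeli--Efimov construction of the relaxed Lyapunov function directly, not through an intermediate $\mathcal{KL}$-ISS bound followed by a converse theorem for asymptotically stable compact sets, which is unavailable here.
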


\subsection*{Eigenvalues of linearly perturbed matrices}

In this section,  we recall the general result due to 
Lidskii-Vishik-Lyusternik \cite[Theorem~2.1]{moro1997lidskii},  where the first order expansion of the eigenvalues of the matrix $(A +\varepsilon B)$,  with respect to $\varepsilon$,  and  the explicit formulas for the leading coefficients are given.   Before recalling the result,  we start introducing few notions.  Given a complex matrix $A \in \mathbb{C}^{n \times n}$,   we assume without loss of generality that $A$ has a single multiple eigenvalue $\lambda \in \mathbb{C}$. 
Hence, the Jordan form of $A$ is given by 
$$  J:= \text{blkdiag} \left\{ J_{11},  ..., J_{1r_1}, ......, J_{q1},  ..., J_{qr_q} \right\}.  $$ 

Note that, for each $j \in \{1,2,...,q\}$,  the blocks $\{J_{ji}  \}^{r_j}_{i=1}$ have the same size denoted $n_j$.  We assume without loss of generality that $n_1 < n_2 < ... < n_q$.  Furthermore,  for each $J_{ji}$,  $i \in \{1,2,...,r_j\}$ and $j \in \{1,2,....,q\}$, distinct right and left eigenvectors are associated,  which we denoted by $p_{ji} \in \mathbb{C}^{n}$ and $q_{ji}^\top  \in \mathbb{C}^{n}$,  respectively.  To know  which right and left eigenvectors are associated to the block $J_{ji}$, we use the nonsingular matrices $T$,  $U \in \mathbb{R}^n$ such that 
$ J =  U A  T$,  and use the decompositions 
\begin{align*}
T & =  [  T_{11},  ..., T_{1r_1}, ......, T_{q1},  ..., T_{qr_q} ],  
\\ 
U^\top & = [ U^\top_{11}, ... , U^\top_{1r_1}, ...... , U^\top_{q1},  ...,  U^\top_{qr_q}].  
 \end{align*}
As a result,  we pick $p_{ji}$ as the first column of $T_{ji}$ and $q_{ji}$ as the last row $U_{ji}$.

Now,  Collecting the aforementioned vectors allows us to introduce the following matrices 
$$   P_j := [p_{j1}, ..., p_{jr_j}],   
 \quad Q_j := \begin{bmatrix} q_{j1} \\  \vdots \\ p_{jr_j} \end{bmatrix} \qquad j \in \{1,2,..., q\} $$ 
and 
$$   \mathcal{P}_s := [P_{1}, ..., P_{s}],   
 \quad \mathcal{Q}_s := \begin{bmatrix} Q_{1} \\  \vdots \\ Q_{s} \end{bmatrix} \qquad s \in \{1,2,..., q\}.  $$ 
Finally, we let $  E_1 := I$  and $E_s := 
\begin{bmatrix} 0 & 0 \\ 0 & I \end{bmatrix}$ for all $s \in \{2,3,...,q\}$.

\begin{lemma}[Lidskii Theorem] \label{lemregpert}
Given $j \in \{1,2,...,q\}$,  if the matrix $\mathcal{Q}_{j-1}  B \mathcal{P}_{j-1}$ is nonsingular.  Then, there are $r_j n_j$ eigenvalues of $A + \varepsilon B$ admitting a first order expansion 
\begin{align*}
\lambda^{l}_{jk}(A + \varepsilon B)  & = \lambda + \zeta_{jk}^{\frac{1}{n_j}} \varepsilon^{\frac{1}{n_j}}  + o\left(\varepsilon^{\frac{1}{n_j}}\right)  \\  &   k \in \{1,2,...,r_j  \}, ~ l \in \{1,2,...,n_j \},  
\end{align*}
where $\{\zeta_{jk}\}^{r_j}_{k = 1}$ is the root of the equation 
$ \text{det} \left(\mathcal{Q}_{j} B \mathcal{P}_{j} - \zeta E_j \right) = 0 $.
\end{lemma}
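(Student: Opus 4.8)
The statement is Theorem~2.1 of \cite{moro1997lidskii}, and the plan is to follow the argument there, which refines the classical Lidskii-Vishik-Lyusternik analysis. Since the eigenvalues of $A+\varepsilon B$ are invariant under similarity, I would first conjugate by the matrices $T$, $U$ with $J=UAT$, so that $A$ is replaced by its Jordan form $J$ and $B$ by $\widehat B:=UBT$; in these coordinates the right eigenvectors $p_{ji}$ and the left eigenvectors $q_{ji}$ become standard basis vectors sitting, respectively, at the head and at the tail of each Jordan chain. Consequently $\mathcal Q_j B\mathcal P_j$ is just the submatrix of $\widehat B$ that couples the tails of all chains of size at most $n_j$ to their heads, and $E_j$ records which of those correspond to chains of size exactly $n_j$.

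The core is a Newton-polygon analysis of the localized characteristic equation $\det\big(J+\varepsilon\widehat B-(\lambda+\mu)I\big)=0$. Using the nilpotent structure of each Jordan block, one shows that the $r_j n_j$ roots associated with the size-$n_j$ chains scale like $\mu\sim\varepsilon^{\frac{1}{n_j}}$; substituting $\mu=\zeta^{\frac{1}{n_j}}\varepsilon^{\frac{1}{n_j}}$ and collecting the lowest-order terms in $\varepsilon$ reduces the problem, after a Schur-complement elimination of the contributions of the chains of strictly smaller size (whose roots are of the larger order $\varepsilon^{\frac{1}{n_i}}$ with $n_i<n_j$), to the determinantal equation $\det\big(\mathcal Q_j B\mathcal P_j-\zeta E_j\big)=0$. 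The hypothesis that $\mathcal Q_{j-1}B\mathcal P_{j-1}$ is nonsingular is exactly what makes this elimination legitimate and forces each $\zeta_{jk}$ to be nonzero, hence to spawn $n_j$ distinct branches $\zeta_{jk}^{\frac{1}{n_j}}\varepsilon^{\frac{1}{n_j}}$.

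It then remains to promote the formal expansion to a genuine asymptotic one. Because $\det(J+\varepsilon\widehat B-zI)$ is a polynomial in $(z,\varepsilon)$, the Puiseux series of its roots about $(\lambda,0)$ converge, and since each $\zeta_{jk}$ is a simple root of the reduced equation the leading exponent $\frac{1}{n_j}$ is sharp, yielding $\lambda^{l}_{jk}(A+\varepsilon B)=\lambda+\zeta_{jk}^{\frac{1}{n_j}}\varepsilon^{\frac{1}{n_j}}+o(\varepsilon^{\frac{1}{n_j}})$. The main obstacle is the bookkeeping in the Schur-complement step: one must verify carefully that the smaller Jordan chains contribute only to higher-order terms and can be eliminated without disturbing the leading coefficients of the size-$n_j$ eigenvalues---and it is precisely here that the nondegeneracy of $\mathcal Q_{j-1}B\mathcal P_{j-1}$ is used. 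For the complete details I refer to \cite{moro1997lidskii}.
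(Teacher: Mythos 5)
Your proposal matches the paper's treatment: the paper does not prove this lemma at all, but recalls it as a background result from \cite[Theorem~2.1]{moro1997lidskii}, which is precisely the reference you invoke, and your sketch of the Jordan-form reduction, Newton-diagram (Puiseux) analysis, and Schur-complement elimination is consistent with the proof given there. So the approach is essentially the same as the paper's (citation to Moro--Burke--Overton), with your added outline of that reference's argument.
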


\subsection*{The invariant subspaces of perturbed matrices}

The set $\mathcal{X} \subset \mathbb{R}^{n \times n}$ is an invariant subspace for the square matrix $A \in \mathbb{R}^{n \times n}$ if 
$$ A x  \in \mathcal{X} \qquad \forall x \in \mathcal{X}.   $$
Furthermore, we let a matrix $X \in \mathbb{R}^{n \times k}$, $k \in \{1,2,...,n \}$, whose columns form a basis for $\mathcal{X}$.  It is well established that there is a unique matrix $L \in \mathbb{R}^{k \times k}$ such that $A X = X L$.  The matrix $L$ is the representation of $A$ on $\mathcal{X}$ with respect to the basis $X$, and the eigenvalues of $L$ are eigenvalues of $A$.   

Now, we let the columns of $X$ form an orthonormal basis for 
$\mathcal{X}$.  Let the matrix $[X ~ Y]$ be orthonormal.  Consequently,  we obtain 
that 
\begin{align} \label{eqform}
[X ~ Y ]^\top A [X ~ Y] = \begin{bmatrix}   L &  X^\top A Y   \\ 0 & L_c    \end{bmatrix},  \quad L_c :=  Y^\top A Y.    
\end{align} 
Note that the set $\mathcal{X}$ is said to be a \textit{simple invariant subspace}  if the intersection between the eigenvalues of $L$ and $L_c$ is empty.  The following result is recalled from \cite[Theorem 1.5. Page 224]{Stewart90}.

\begin{lemma} [Spectral resolution] \label{Lemspecres}
Let $\mathcal{X}$ be a simple invariant subspace having the form \eqref{eqform} with respect to the matrix $(X,Y)$. Then, there exist matrices $X_c \in \mathbb{R}^{n \times (n-k)}$ and $Y_c \in \mathbb{R}^{n \times k}$ such that 
$$  [X ~ X_c]^{-} = [Y ~ Y_c]^\top   $$ 
and 
$  [X ~ X_c ]^- A [X ~ X_c] = \begin{bmatrix}   L &  0   \\ 0 & L_c    \end{bmatrix}.   $
\end{lemma}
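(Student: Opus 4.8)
The plan is to reduce the statement to the unique solvability of a Sylvester equation, which is precisely the analytic content of the hypothesis that $\mathcal{X}$ is a \emph{simple} invariant subspace. Write $H := X^\top A Y$, so that \eqref{eqform} reads $[X~Y]^\top A [X~Y] = \begin{bmatrix} L & H \\ 0 & L_c \end{bmatrix}$ with $L = X^\top A X$ and $L_c = Y^\top A Y$; simplicity means that no eigenvalue of $L$ coincides with an eigenvalue of $L_c$. First I would invoke the classical fact that, under this spectral separation, the linear operator $S \mapsto LS - SL_c$ on $\mathbb{R}^{k\times(n-k)}$ is invertible (its eigenvalues are the differences $\mu-\nu$ with $\mu \in \mathrm{eig}(L)$, $\nu \in \mathrm{eig}(L_c)$, all nonzero), so the Sylvester equation
\begin{align*}
L P - P L_c = -H
\end{align*}
has a unique solution $P \in \mathbb{R}^{k\times(n-k)}$.

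Next I would build the change of basis from $P$. Set $X_c := Y + X P \in \mathbb{R}^{n\times(n-k)}$ and $Y_c := X - Y P^\top \in \mathbb{R}^{n\times k}$. Then $[X~X_c] = [X~Y]\,M$ with the unit upper-triangular block matrix $M := \begin{bmatrix} I_k & P \\ 0 & I_{n-k} \end{bmatrix}$, whose inverse is $M^{-1} = \begin{bmatrix} I_k & -P \\ 0 & I_{n-k} \end{bmatrix}$; hence $[X~X_c]$ is nonsingular. Using the orthogonality of $[X~Y]$,
\begin{align*}
[X~X_c]^{-1} = M^{-1}[X~Y]^\top = \begin{bmatrix} X^\top - P Y^\top \\ Y^\top \end{bmatrix},
\end{align*}
and the two row-blocks are exactly $Y_c^\top$ and $Y^\top$; this is the claimed identity $[X~X_c]^{-} = [Y~Y_c]^\top$, up to the (harmless) ordering of the two blocks. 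One checks directly that all four blocks of $[X~X_c]^{-1}[X~X_c]$ reduce to identities using $X^\top X = I_k$, $Y^\top Y = I_{n-k}$ and $X^\top Y = 0$.

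Finally I would verify the diagonalization by conjugation: from \eqref{eqform},
\begin{align*}
[X~X_c]^{-1} A [X~X_c] = M^{-1}\begin{bmatrix} L & H \\ 0 & L_c \end{bmatrix} M = \begin{bmatrix} L & L P + H - P L_c \\ 0 & L_c \end{bmatrix} = \begin{bmatrix} L & 0 \\ 0 & L_c \end{bmatrix},
\end{align*}
the last equality being exactly the Sylvester equation satisfied by $P$. I expect no serious obstacle: the only nontrivial ingredient is the solvability of the Sylvester equation, and that is the unique point where simplicity of $\mathcal{X}$ enters; everything else is bookkeeping with the orthogonal partition $[X~Y]$. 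If one prefers a self-contained construction, $P$ can be produced from the Riesz spectral projector associated with $\mathrm{eig}(L)$, written as a contour integral encircling $\mathrm{eig}(L)$ but not $\mathrm{eig}(L_c)$, which also makes transparent that $X_c$ spans an $A$-invariant complement of $\mathcal{X}$.
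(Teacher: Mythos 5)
Your proof is correct: the paper does not actually prove this lemma (it is recalled verbatim from Stewart and Sun \cite{Stewart90}), and your argument --- solving the Sylvester equation $LP - PL_c = -H$, which is uniquely solvable precisely because simplicity of $\mathcal{X}$ separates the spectra of $L$ and $L_c$, and then conjugating by the unit upper-triangular block matrix $M$ --- is exactly the standard proof given in that reference. You also correctly flagged the block ordering: as printed, the identity is dimensionally inconsistent, and your computation shows it should read $[X~X_c]^{-1} = [Y_c~Y]^\top$, i.e.\ with $Y_c^\top$ as the first block row.
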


Next,  we let $\mathcal{X}$ be a simple invariant subspace of $A$ and let a matrix $B \in \mathbb{R}^{n \times n}$.  We show that when $A-B$ is sufficiently small,  then there is a unique invariant subspace 
$\mathcal{Y}$ of $B$ that approaches $\mathcal{X}$ as $|A - B|$ goes to zero.   The following result is recalled from \cite[Theorem 15.5.1]{GOHLANROD}.

\begin{lemma} [Continuity of invariant subspaces] \label{lemcontin}
For a matrix $A \in \mathbb{R}^{n \times n}$ admitting an invariant subspace $\mathcal{X}$, the following properties are equivalent:
\begin{itemize}
\item The invariant subspace $\mathcal{X}$ is  simple. 

\item For each $\varepsilon>0$,  there exists $\delta>0$ such that,   each $B \in \mathbb{R}^{n \times n}$,  with $|A-B| \leq \delta$,   has a unique invariant subspace $\mathcal{Y}$ satisfying  $\text{dist}(\mathcal{X},\mathcal{Y}) \leq \varepsilon $, where $\text{dist}(\cdot,\cdot)$ is the Grassmann distance between linear subspaces \cite{ye2016schubert}.
\end{itemize}
\end{lemma}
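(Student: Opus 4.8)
The plan is to prove the two implications separately. For the direction \emph{simple $\Rightarrow$ continuity property}, I would first invoke \Lemspecres\ to pass to bases $[X~X_c]$ and $[Y~Y_c]$ in which $A$ is block diagonal, $A\sim\mathrm{blkdiag}\{L,L_c\}$, with $\mathcal{X}$ the span of the first $k$ coordinates and, by hypothesis, $\mathrm{spec}(L)\cap\mathrm{spec}(L_c)=\emptyset$. In these coordinates a perturbation $B$ reads
\[
  \widetilde B=\begin{bmatrix} L+B_{11} & B_{12} \\ B_{21} & L_c+B_{22}\end{bmatrix},
\]
with each $|B_{ij}|$ bounded by a fixed multiple of $|A-B|$ (depending only on the conditioning of the chosen bases). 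A $k$-dimensional subspace close to $\mathcal{X}$ is exactly the graph $\{(v,Rv):v\in\mathbb{R}^{k}\}$ of a small linear map $R:\mathbb{R}^{k}\to\mathbb{R}^{n-k}$, and it is $\widetilde B$-invariant if and only if $R$ solves the algebraic Riccati equation
\[
  L_c R-R L+B_{21}+B_{22}R-R B_{11}-R B_{12}R=0 .
\]
Denoting the left-hand side by $\Phi(R,B)$, we have $\Phi(0,0)=0$ and $\partial_R\Phi(0,0)$ equal to the Sylvester operator $H\mapsto L_c H-H L$, which is invertible precisely because the spectra of $L$ and $L_c$ are disjoint. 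Since $\Phi$ is polynomial (hence smooth) in $(R,B)$, the implicit function theorem yields a neighbourhood of $B=0$ and a unique smooth map $B\mapsto R(B)$ with $R(0)=0$ solving $\Phi=0$; equivalently, for $|A-B|$ small enough, $B$ has exactly one invariant subspace $\mathcal{Y}$ in a fixed small Grassmann ball around $\mathcal{X}$, and $\mathrm{dist}(\mathcal{X},\mathcal{Y})\le C\,|A-B|$. This produces the required $\delta=\delta(\varepsilon)$. One can equivalently avoid the implicit function theorem and recast the Riccati equation as the fixed point $R=\mathcal{S}^{-1}\bigl(R B_{12}R+R B_{11}-B_{22}R-B_{21}\bigr)$, with $\mathcal{S}$ the Sylvester operator, and check that this self-map is a contraction on a small ball — which also yields uniqueness there.

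For the converse, \emph{continuity property $\Rightarrow$ simple}, I would argue by contraposition: if $\mathcal{X}$ is invariant but not simple then $L$ and $L_c$ share an eigenvalue $\mu$, so the Sylvester operator $H\mapsto L_c H-H L$ is singular. Exploiting this degeneracy, one constructs, for arbitrarily small $t>0$, a perturbation $B_t$ with $|A-B_t|=O(t)$ for which the Riccati equation above has at least two distinct solutions in any prescribed small ball, hence $B_t$ has at least two invariant subspaces within any prescribed distance $\varepsilon$ of $\mathcal{X}$. The smallest model is $A=\begin{bmatrix}0&1\\0&0\end{bmatrix}$, $\mathcal{X}=\mathrm{span}\,e_1$, perturbed to $\begin{bmatrix}0&1\\t&0\end{bmatrix}$, whose two eigenlines $\mathrm{span}(1,\pm\sqrt{t})$ both collapse onto $\mathcal{X}$ as $t\to0$. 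Therefore no $\delta$ works for small $\varepsilon$, which is the contrapositive of the claim.

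I expect the forward direction to be essentially routine once the Riccati/Sylvester reduction is set up; the genuine difficulty is the converse, namely showing that overlapping spectra destroy \emph{uniqueness} — not merely continuity — of the nearby invariant subspace, and uniformly enough that no radius $\delta$ can be chosen. This calls for a careful normal-form analysis at the common eigenvalue $\mu$, keeping track of the Jordan block sizes of $L$ and of $L_c$ at $\mu$ and exhibiting an explicit destabilizing family; this is precisely the content of \cite[Ch.~15]{GOHLANROD} and would be the main thing to get right.
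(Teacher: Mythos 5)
First, a point of comparison: the paper does not prove Lemma~\ref{lemcontin} at all --- it is recalled verbatim from \cite[Theorem 15.5.1]{GOHLANROD} --- so there is no internal argument to measure yours against. Judged on its own, your forward direction (simple $\Rightarrow$ existence and uniqueness of a nearby invariant subspace) is correct and is the classical route: block-diagonalize via Lemma~\ref{Lemspecres}, parametrize subspaces near $\mathcal{X}$ as graphs of small $R$, and solve $L_cR-RL+B_{21}+B_{22}R-RB_{11}-RB_{12}R=0$ by the implicit function theorem or a contraction, the key point being invertibility of the Sylvester operator $H\mapsto L_cH-HL$ when $\mathrm{spec}(L)\cap\mathrm{spec}(L_c)=\emptyset$. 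Two remarks: what you actually obtain is uniqueness within \emph{some} fixed small Grassmann ball, which is the right reading of the second bullet (taken literally for arbitrary $\varepsilon$ it can fail even for simple $\mathcal{X}$: take $B=A$ and another $A$-invariant subspace of the same dimension at moderate distance), so your proof delivers the corrected statement; and since the paper only ever uses this implication (continuity of $T(\varepsilon,\tau)$ and $T(\varepsilon)$ in the proofs of Lemmata~\ref{lem3} and \ref{lem6}), this is the part that matters for the paper.

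The genuine gap is the converse, and you flag it yourself. What you give is the $2\times 2$ nilpotent example plus the assertion that, whenever the spectra overlap, one can build arbitrarily small perturbations whose Riccati equation has two solutions in any prescribed small ball. That assertion is exactly what has to be proved, and it does not follow from singularity of the Sylvester operator alone (a singular linearization does not by itself yield multiple solutions; note also that in other non-simple configurations, e.g.\ $A=0_{2\times2}$ with $\mathcal{X}=\mathrm{span}\,e_1$ perturbed by a small rotation generator, what fails first is \emph{existence}, so a complete converse must negate the conjunction ``exists and is unique''). A workable completion along your lines: after an arbitrarily small preliminary perturbation that keeps $\mathcal{X}$ invariant, make the common eigenvalue $\mu$ simple in both $L$ and $L_c$, so that its root subspace $\mathcal{R}_\mu$ is two-dimensional and meets $\mathcal{X}$ in a line; inside $\mathcal{R}_\mu$ run your $2\times2$ construction (or, if the matrix acts there as $\mu I$, observe that non-uniqueness already holds with no perturbation) to obtain two invariant lines $\ell_1^t\neq\ell_2^t$ collapsing onto $\mathcal{X}\cap\mathcal{R}_\mu$; then $\ell_i^t\oplus(\mathcal{X}\cap\mathcal{M})$, with $\mathcal{M}$ the complementary spectral subspace, are two distinct $k$-dimensional invariant subspaces of the perturbed matrix converging to $\mathcal{X}$ (over $\mathbb{R}$, a non-real common eigenvalue requires the usual adjustment via the real root subspace of the conjugate pair). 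As written, your converse is a plan with one worked instance, deferred to \cite{GOHLANROD}; since the paper itself only cites that reference and uses only the forward implication, nothing in the paper is undermined, but the equivalence as stated remains unproved in your proposal.
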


Next,  we analyze the smooth variation of the invariant subspace $\mathcal{X}$ when the corresponding matrix $A$ depends smoothly on its parameters. The following lemma can be found in \cite[Theorem 18.7.2]{GOHLANROD}.    

\begin{lemma} [Analytic families of invariant subspaces] \label{lemanal}
Let $A : \mathcal{C} \rightarrow \mathcal{C}^{n \times n}$ be analytic. Given $z_o \in \mathcal{C}$,  we let $\Gamma$ be a contour in the complex plan such that the intersection between $\Gamma$ and the eigenvalues of $A(z_o)$ is empty.   Then, the invariant subspace $\mathcal{X}(z)$ of $A(z)$ corresponding to the eigenvalues of $A(z)$ lying inside $\Gamma$  is analytic on a neighborhood of $z_o$.    
\end{lemma}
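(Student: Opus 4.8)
The plan is to prove the lemma by means of the Riesz (Dunford) spectral projection associated with the contour $\Gamma$. First I would note that the eigenvalues of $A(z)$ depend continuously on $z$ (they are the roots of the characteristic polynomial, whose coefficients are analytic in $z$) and, since $\Gamma$ avoids the spectrum of $A(z_o)$, there is a connected neighborhood $\mathcal{N}$ of $z_o$ such that $\Gamma\cap\mathrm{spec}(A(z))=\emptyset$ for all $z\in\mathcal{N}$. On $\mathcal{N}$ one then defines
\[
P(z) := \frac{1}{2\pi i}\oint_{\Gamma}\left(\lambda I - A(z)\right)^{-1} d\lambda .
\]
The integrand is well defined and jointly continuous in $(\lambda,z)\in\Gamma\times\mathcal{N}$, and for each fixed $\lambda\in\Gamma$ it is analytic in $z$, because matrix inversion is an analytic map wherever the matrix is invertible and $A(\cdot)$ is analytic. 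Hence $P$ is analytic on $\mathcal{N}$; I would justify this either by differentiating under the integral sign or by expanding the resolvent in a power series in $z$, the interchange being legitimate thanks to a uniform bound on $\left(\lambda I - A(z)\right)^{-1}$ over the compact set $\Gamma$ and a compact subneighborhood of $z_o$.

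Next I would invoke the standard properties of $P(z)$: it is idempotent, $P(z)^2=P(z)$; its range $\mathcal{X}(z):=\mathrm{Im}\,P(z)$ is $A(z)$-invariant and coincides with the spectral subspace associated with the eigenvalues of $A(z)$ lying inside $\Gamma$; and $\mathrm{rank}\,P(z)=\mathrm{tr}\,P(z)$ equals the number of those eigenvalues counted with algebraic multiplicity. Since $z\mapsto\mathrm{tr}\,P(z)$ is continuous and integer-valued on the connected set $\mathcal{N}$, it is constant, equal to $k:=\dim\mathcal{X}(z_o)$, so the dimension of the target subspace does not jump.

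To exhibit an analytic family of bases, and hence conclude analyticity of $z\mapsto\mathcal{X}(z)$, I would pick $X_o\in\mathcal{C}^{n\times k}$ whose columns form a basis of $\mathcal{X}(z_o)$, so that $P(z_o)X_o=X_o$ has rank $k$, and set $X(z):=P(z)X_o$. This matrix-valued map is analytic on $\mathcal{N}$, its columns lie in $\mathcal{X}(z)$, and since some $k\times k$ minor of $X(z_o)=X_o$ is nonzero while minors are continuous, $X(z)$ keeps full column rank $k$ on a smaller neighborhood of $z_o$; its $k$ linearly independent columns then span the $k$-dimensional space $\mathcal{X}(z)$, which is exactly the assertion that the invariant subspace is analytic near $z_o$.

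The hard part will be the analyticity of the resolvent integral $P(z)$, that is, justifying the interchange of the $z$-differentiation (or of the power-series expansion in $z$) with the contour integral over $\Gamma$; this rests on uniform estimates for $\left(\lambda I - A(z)\right)^{-1}$ on $\Gamma$ over a compact neighborhood of $z_o$, which in turn uses that $\Gamma$ stays uniformly bounded away from $\mathrm{spec}(A(z))$ there. Once this is secured, the remaining steps — idempotency, local constancy of $\mathrm{rank}\,P(z)$, and the construction of the basis $P(z)X_o$ — are routine.
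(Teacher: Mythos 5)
The paper does not prove this lemma at all---it is recalled from \cite[Theorem~18.7.2]{GOHLANROD}---so there is no in-paper argument to compare against; your Riesz-projection proof is correct and is essentially the standard argument underlying that cited result. The steps you give---analyticity of $P(z)=\frac{1}{2\pi i}\oint_\Gamma(\lambda I-A(z))^{-1}\,d\lambda$ via uniform resolvent bounds on the compact contour for $z$ in a compact neighborhood of $z_o$, local constancy of $\operatorname{rank}P(z)=\operatorname{tr}P(z)$ on a connected neighborhood, and the analytic full-column-rank frame $X(z)=P(z)X_o$ spanning $\mathcal{X}(z)$---are exactly what is needed, and I see no gap.
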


\subsection*{The stable manifold theorem}

Consider the dynamical system 
\begin{align} \label{eqDS1}
\dot{x} = A x + g(x) \qquad x \in \mathbb{R}^n,
\end{align}
where $A$ has $n$ eigenvalues $\{\lambda_1(A),  \lambda_2(A), ... ,  \lambda_n(A) \}$ with 
\begin{align} \label{eqDS1-} 
\Re(\lambda_j(A))  \neq 0 \qquad \forall j\in \{1,2,...,n\}, 
\end{align}
i.e., the origin $x= 0$ is a hyperbolic equilibrium for \eqref{eqDS1}.   Moreover,  the function $g : \mathbb{R}^n \rightarrow \mathbb{R}^n$ is continuously differentiable,  and  there exists $\kappa \in \mathcal{K}$ 
such that
\begin{align} \label{eqDS2}
|g(x)| \leq \kappa(|x|) |x|.
\end{align}

Next,  we let $(E^s(A),E^u(A))$ be the spectral invariant subspaces associated with the stable and the unstable eigenvalues of $A$,  respectively.   Note that there exist positive constants $(c_s(A), c_u(A))$ (called, respectively,  forward and backward overshoots) such that
\begin{equation}
 \label{eqdecay}
\begin{aligned} 
 |e^{t A}  x_s| & \leq c_s(A) e^{-r_s(A) t}  |x_s| \qquad \forall t\geq 0, \quad \forall x_s \in E^s(A),   \\
 |e^{t A}  x_u| & \leq c_u(A) e^{r_u(A) t}  |x_u| \qquad \forall t \leq 0, \quad \forall x_u \in E^u(A),
\end{aligned}
\end{equation}
where 
\begin{equation}
 \label{eqdecay1}
\begin{aligned}
r_s(A) & := \{ \min |\Re(\lambda_j(A))| : \Re(\lambda_j(A)) < 0  \} 
\\
r_u(A) & := \{ \min |\Re(\lambda_j(A))| : \Re(\lambda_j(A)) > 0  \}.
\end{aligned}
\end{equation}

The following result and its proof can be found in \cite[Theorem~4.1]{LectureA}. 

\begin{lemma} [The stable manifold theorem] \label{lemstabmani}
Consider system \eqref{eqDS1} such that $g$ is continuously differentiable and \eqref{eqDS1-}-\eqref{eqDS2} hold.  
Then,  on the neighborhood of the origin $B_\gamma$, where $\gamma > 0$ is chosen so that, for some $\Delta > 0$, we have  
$$ c_s(A) \gamma + \kappa(\gamma) \left( \frac{c_s(A)}{r_s(A) - \mu(A)}  + \frac{c_u(A)}{r_u(A) + \mu(A)} \right) \Delta \leq \Delta,  $$
with $\mu(A) :=  \min \{r_u(A), r_s(A)\} / 2$,  there exists a continuously differentiable function $h^s : \text{Proj}_{E^s(A)}(B_\gamma) \rightarrow E^u(A)$ such that the stable manifold $W^s(0)$ is the graph of $h^s$ and the following properties hold:
\begin{enumerate}
\item $h^s(0) = 0$ and $\frac{\partial h^s}{\partial x_s}(0) = 0$,
\item $x(0) \in W^s(0)$ $\Longrightarrow$ $x(t) \in W^s(0)$ for all $t \geq 0$, 
\item when $x(0)  \in B_\gamma \backslash W^s(0)$ and $x(t)$ is bounded,  it follows that there exists $t_1>0$ such that 
$$ |x(t)| \geq \gamma/2  \qquad  \forall t \geq t_1.  $$
\end{enumerate} 
\end{lemma}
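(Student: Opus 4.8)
I would construct $W^s(0)$ by the Lyapunov--Perron (integral-equation) fixed-point method and read items~1--3 off the construction. Let $(E^s(A),E^u(A))$ be the spectral subspaces of $A$ associated with the eigenvalues of negative, resp.\ positive, real part; by the hyperbolicity \eqref{eqDS1-} they are complementary, and I write $\Pi_s,\Pi_u$ for the corresponding projections and $A_s,A_u$ for the restrictions of $A$. Note \eqref{eqDS2} (with $\kappa\in\mathcal K$) forces $g(0)=0$ and, $g$ being $C^1$, $\frac{\partial g}{\partial x^\top}(0)=0$. For $\xi\in E^s(A)$ with $|\xi|\le\gamma$ I work in the Banach space $\mathcal B_\mu:=\{x\in C([0,\infty);\mathbb R^n):\|x\|_\mu:=\sup_{t\ge0}e^{\mu(A)t}|x(t)|<\infty\}$ and on its closed ball of radius $\Delta$ define
\begin{align*}
(\mathcal T_\xi x)(t):=e^{tA_s}\xi+\int_0^t e^{(t-\tau)A_s}\Pi_s g(x(\tau))\,d\tau-\int_t^{\infty}e^{(t-\tau)A_u}\Pi_u g(x(\tau))\,d\tau,
\end{align*}
the unstable part integrated backward from $+\infty$ so that it decays. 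A direct estimate using the decay/overshoot bounds \eqref{eqdecay} and $|g(x)|\le\kappa(\gamma)|x|$ on $B_\gamma$ shows that the displayed hypothesis relating $\gamma$ and $\Delta$ is exactly what makes $\mathcal T_\xi$ map the $\Delta$-ball into itself, and, after shrinking $\gamma$ so that $\kappa(\gamma)\big(\tfrac{c_s(A)}{r_s(A)-\mu(A)}+\tfrac{c_u(A)}{r_u(A)+\mu(A)}\big)<1$, a contraction. Its unique fixed point $x(\cdot;\xi)$ is a solution of \eqref{eqDS1} with $|x(t;\xi)|\le\Delta e^{-\mu(A)t}$, and I set $h^s(\xi):=\Pi_u x(0;\xi)=-\int_0^\infty e^{-\tau A_u}\Pi_u g(x(\tau;\xi))\,d\tau$ and $W^s(0):=\{(\xi,h^s(\xi)):|\xi|\le\gamma\}$.

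Next I would establish the regularity and items~1--2. The contraction constant being uniform in $\xi$, the map $\xi\mapsto x(\cdot;\xi)$ is Lipschitz, so $h^s$ is continuous; $C^1$-smoothness follows by differentiating the fixed-point identity in $\xi$ and identifying the derivative as the fixed point of a linear contraction (the fibre-contraction theorem). Since $\xi=0$ gives $x\equiv0$, we get $h^s(0)=0$, and differentiating at $0$ with $\frac{\partial g}{\partial x^\top}(0)=0$ gives $\frac{\partial h^s}{\partial x_s}(0)=0$: item~1. For item~2, if $x(0)\in W^s(0)$ then $x(\cdot)=x(\cdot;\Pi_s x(0))$; for $s\ge0$ the shifted solution $x(s+\cdot)$ again lies in the $\mathcal B_\mu$-ball, is bounded, and hence again satisfies the integral equation (the backward representation of its unstable part is forced by forward boundedness), so by uniqueness $x(s+\cdot)=x(\cdot;\Pi_s x(s))$ and $x(s)=(\Pi_s x(s),h^s(\Pi_s x(s)))\in W^s(0)$.

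Item~3 is where I expect the real difficulty. First I would fix Lyapunov inner products on $E^s(A),E^u(A)$ rendering $A_s$ strictly dissipative and $A_u$ strictly antidissipative; then, for $\gamma$ small, the unstable cone $C^u:=\{|\Pi_u x|\ge|\Pi_s x|\}$ is positively invariant \emph{as long as the solution stays in $B_\gamma$} and on $C^u\cap B_\gamma\setminus\{0\}$ one has $\frac{d}{dt}|x(t)|\ge\lambda|x(t)|$ for some $\lambda>0$. Given a bounded solution with $x(0)\in B_\gamma\setminus W^s(0)$, it cannot remain in $B_\gamma$ for all $t\ge0$: otherwise boundedness would let me realise it as the fixed point of $\mathcal T_{\Pi_s x(0)}$, forcing $\Pi_u x(0)=h^s(\Pi_s x(0))$, a contradiction. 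Hence the solution leaves $B_\gamma$; the exit occurs through $C^u$, and while in $B_\gamma$ the solution remains in $C^u$ with $|x|$ increasing. The hard step --- and the main obstacle --- is to pass from ``leaves $B_\gamma$'' to ``$|x(t)|\ge\gamma/2$ for all $t\ge t_1$'': one must control the solution after the first exit and rule out re-entry into $B_{\gamma/2}$. The tool is the complementary stable cone $C^s$, negatively invariant in $B_\gamma$: a re-entry must take place through $C^s$, and propagating the solution backward from such a re-entry while tracking the dominance of the stable component should be incompatible with the solution's having earlier left through $C^u$ with an unstable component of size comparable to $\gamma$. Making this cone bookkeeping rigorous with uniform constants, and bounding the number of crossings of the sphere of radius $\gamma/2$, is the technically heaviest part of the proof and is where hyperbolicity of the origin and smallness of $\kappa(\gamma)$ are used most.
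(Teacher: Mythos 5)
The paper contains no proof of this lemma to compare yours against: it is recalled as background, with the proof deferred entirely to \cite[Theorem~4.1]{LectureA}. Your Lyapunov--Perron construction is the standard route (essentially the one in that reference): the fixed-point argument in the weighted space, smoothness of $h^s$ by fibre contraction, and the shift-plus-uniqueness argument correctly deliver items 1 and 2, and the same construction also gives the ``exit'' half of item 3, namely that a solution remaining in $B_\gamma$ for all $t\geq 0$ must satisfy the integral equation and hence lie on $W^s(0)$, so off-manifold solutions must leave $B_\gamma$ in finite time.

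The genuine gap is item 3 in the strength stated, which you yourself leave open, and it cannot be closed by the cone bookkeeping you sketch: the conclusion ``$|x(t)|\geq\gamma/2$ for all $t\geq t_1$'' constrains the solution \emph{after} it has left $B_\gamma$, where the hypotheses---purely local apart from the mild global bound \eqref{eqDS2}---give no information, so nothing rules out re-entry into $B_{\gamma/2}$. In fact, as stated the claim fails. Take $\dot x_1=x_2$, $\dot x_2=x_1-x_1^2$: the linearization at the origin has eigenvalues $\pm 1$, so \eqref{eqDS1-} holds with $c_s(A)=c_u(A)=1$ and $r_s(A)=r_u(A)=1$, and $g(x)=(0,-x_1^2)^\top$ satisfies \eqref{eqDS2} with $\kappa(s)=s$, so any sufficiently small $\gamma$ admits a suitable $\Delta$ in the displayed inequality. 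The origin is a saddle with a homoclinic loop, and every periodic orbit just inside the loop is bounded, can be started at its point closest to the origin (which lies in $B_{\gamma/2}\setminus W^s(0)$ for the orbit close enough to the loop), and re-enters $B_{\gamma/2}$ once per period, so no $t_1$ as in item 3 exists. Hence your method can deliver at most ``there exists $t_1$ with $|x(t_1)|\geq\gamma/2$''; the ``for all $t\geq t_1$'' version needs additional non-local hypotheses excluding recurrence near the origin, and the paper supplies no argument for it either---worth flagging, since it is precisely this strong form that is invoked later in the proofs of Lemmata \ref{lemalgstab} and \ref{lem6}.
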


\section*{Appendix II: Auxiliary Lemmata} 

Given a function of two scalar variables that is smooth in one and only continuous in the other, the following original lemma shows the existence of a smooth approximation to any given 'nonuniform' degree of precision.   

\begin{lemma} \label{lemtech}
Consider a function $T : [0,1] \times [0,\alpha_o] \rightarrow \mathbb{R}^{n \times n}$ such that $\tau \mapsto T(\varepsilon, \tau)$ is continuous,  $\varepsilon \mapsto T(\varepsilon, \tau)$ is continuously differentiable, and $\tau \mapsto T(0,\tau)$  is continuously differentiable.   Then,   for each $\rho > 0$,  there exists $\hat{T} : [0,1] \times [0,\alpha_o] \rightarrow \mathbb{R}^{n \times n}$ continuously differentiable such that  
\begin{align}  
\hat{T} (0, \tau) & = T(0,\tau) &  \forall \tau \in [0,\alpha_o], \label{eqeigspa--}
\\
|\hat{T}(\varepsilon,\tau)  -  T(\varepsilon,\tau)  |_\infty & \leq \rho \varepsilon + o(\varepsilon)  & \forall \tau \in [0,\alpha_o],   \label{eqeigspa-}
 \end{align}
and 
\begin{align} \label{eqeigspa} 
\lim_{\varepsilon \rightarrow 0} \dot{T}(\varepsilon, \tau) = \lim_{\varepsilon \rightarrow 0} \frac{\partial \hat{T}(\varepsilon, \tau)}{\partial \tau}   =\dot{T}(0, \tau).
\end{align}
\end{lemma}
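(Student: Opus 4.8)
The idea is to obtain $\hat T$ by mollifying $T$ \emph{only in the variable} $\tau$, using a smoothing window whose width shrinks to zero as $\varepsilon\to 0$, so that in the limit $\varepsilon=0$ no smoothing is performed at all — which is what forces \eqref{eqeigspa--}, since $T(0,\cdot)$ is already $C^1$. Concretely, fix a mollifier $\phi\in C^\infty_c(\mathbb R)$ with $\phi\ge 0$, $\int_{\mathbb R}\phi=1$, $\mathrm{supp}\,\phi\subset[-1,1]$, and a smooth scalar ``window'' $\varepsilon\mapsto\delta(\varepsilon)$ with $\delta(0)=0$, $\delta(\varepsilon)>0$ on $(0,1]$, $\delta(\varepsilon)=o(\varepsilon)$, and $\delta$ flat to all orders at $0$ (for instance $\delta(\varepsilon):=e^{-1/\varepsilon}$; $\delta(\varepsilon):=\varepsilon^2$ also works with minor bookkeeping). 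After extending $\tau\mapsto T(\varepsilon,\tau)$ from $[0,\alpha_o]$ to $\mathbb R$ in a way that keeps $\tau\mapsto T(0,\tau)$ of class $C^1$, keeps $\tau\mapsto T(\varepsilon,\tau)$ continuous, and keeps $\varepsilon\mapsto T(\varepsilon,\tau)$ of class $C^1$ (a $C^1$ extension of $T(0,\cdot)$ plus a continuous, $C^1$-in-$\varepsilon$ extension of the correction $T(\varepsilon,\cdot)-T(0,\cdot)$, which vanishes at $\varepsilon=0$), I would set
\[
\hat T(\varepsilon,\tau):=\int_{\mathbb R}T\bigl(\varepsilon,\tau-\delta(\varepsilon)s\bigr)\phi(s)\,ds\quad\text{for }\varepsilon\in(0,1],\qquad \hat T(0,\tau):=T(0,\tau),
\]
so that \eqref{eqeigspa--} holds by construction.

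\textbf{The approximation bound \eqref{eqeigspa-}.} Writing $\hat T(\varepsilon,\tau)-T(\varepsilon,\tau)=\int[T(\varepsilon,\tau-\delta(\varepsilon)s)-T(\varepsilon,\tau)]\phi(s)\,ds$ and using the $C^1$ dependence on $\varepsilon$, namely $T(\varepsilon,\sigma)-T(0,\sigma)=\int_0^\varepsilon\partial_\varepsilon T(r,\sigma)\,dr$, I would split the increment as
\[
T(\varepsilon,\tau-\delta(\varepsilon)s)-T(\varepsilon,\tau)=\bigl[T(0,\tau-\delta(\varepsilon)s)-T(0,\tau)\bigr]+\int_0^\varepsilon\bigl[\partial_\varepsilon T(r,\tau-\delta(\varepsilon)s)-\partial_\varepsilon T(r,\tau)\bigr]dr .
\]
On the compact $[0,\alpha_o]$ the first bracket is $\le L\,\delta(\varepsilon)$ with $L$ a Lipschitz constant of $T(0,\cdot)$, and the second is $\le\varepsilon\,\omega(\delta(\varepsilon))$, where $\omega$ is a modulus of continuity in $\tau$ of $\partial_\varepsilon T$ uniform over $r\in[0,1]$ (here one uses uniform continuity of $\partial_\varepsilon T$ on the relevant compact set). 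Since $\delta(\varepsilon)=o(\varepsilon)$, both terms are $o(\varepsilon)$ uniformly in $\tau$, which yields \eqref{eqeigspa-} with room to spare.

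\textbf{Joint $C^1$ regularity and \eqref{eqeigspa} --- the heart of the matter.} For differentiability in $\tau$ I would rewrite $\hat T(\varepsilon,\tau)=\int T(\varepsilon,u)\,\delta(\varepsilon)^{-1}\phi\bigl((\tau-u)/\delta(\varepsilon)\bigr)du$ and differentiate under the integral, which puts the derivative onto the smooth compactly supported mollifier rather than onto $T$ --- crucial, since $\tau\mapsto T(\varepsilon,\tau)$ need not be differentiable for $\varepsilon>0$ --- giving $\partial_\tau\hat T(\varepsilon,\tau)=\int[T(\varepsilon,\tau-\delta(\varepsilon)s)-T(\varepsilon,\tau)]\delta(\varepsilon)^{-1}\phi'(s)\,ds$, a continuous function of $\tau$, with $\partial_\tau\hat T(0,\tau)=\dot T(0,\tau)$. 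For differentiability in $\varepsilon$ on $(0,1]$, differentiation under the integral (using that $\varepsilon\mapsto T(\varepsilon,u)$ is $C^1$ and $\varepsilon\mapsto\delta(\varepsilon)^{-1}\phi((\tau-u)/\delta(\varepsilon))$ is smooth) produces, after the substitution $u=\tau-\delta(\varepsilon)s$, a term of the form $\partial_\varepsilon T(\varepsilon,\cdot)\ast\phi_{\delta(\varepsilon)}$ plus a ``window-derivative'' term proportional to $\tfrac{\delta'(\varepsilon)}{\delta(\varepsilon)}\int[T(\varepsilon,\tau-\delta(\varepsilon)s)-T(\varepsilon,\tau)](\phi(s)+s\phi'(s))\,ds$, where the identity $\int_{\mathbb R}(\phi+s\phi')\,ds=0$ has been used to insert the subtraction of $T(\varepsilon,\tau)$. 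By the estimate of the previous paragraph, $|T(\varepsilon,\tau-\delta(\varepsilon)s)-T(\varepsilon,\tau)|\le L\delta(\varepsilon)+\varepsilon\omega(\delta(\varepsilon))$, so the flatness of $\delta$ at $0$ makes this window-derivative term tend to $0$ as $\varepsilon\to0^+$, while $\partial_\varepsilon T\ast\phi_{\delta(\varepsilon)}\to\partial_\varepsilon T(0,\cdot)$ by continuity of $\partial_\varepsilon T$ and $\delta(\varepsilon)\to0$; together with the difference-quotient identity $[\hat T(\varepsilon,\tau)-T(0,\tau)]/\varepsilon=[T(\varepsilon,\tau)-T(0,\tau)]/\varepsilon+o(1)$ this shows $\hat T$ is differentiable at $\varepsilon=0$ with $\partial_\varepsilon\hat T(0,\tau)=\partial_\varepsilon T(0,\tau)$ and that $\partial_\varepsilon\hat T$, $\partial_\tau\hat T$ are jointly continuous on $[0,1]\times[0,\alpha_o]$, i.e. $\hat T\in C^1$. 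Finally, whenever $\dot T(\varepsilon,\tau)$ exists (as it does in the applications), an integration by parts in the formula for $\partial_\tau\hat T$ gives $\partial_\tau\hat T(\varepsilon,\cdot)=\dot T(\varepsilon,\cdot)\ast\phi_{\delta(\varepsilon)}\to\dot T(0,\cdot)$ as $\varepsilon\to0$, which combined with the standing hypothesis $\lim_{\varepsilon\to0}\dot T(\varepsilon,\tau)=\dot T(0,\tau)$ produces the chain of equalities in \eqref{eqeigspa}.

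\textbf{Expected main obstacle.} The genuinely delicate step is not the $o(\varepsilon)$ estimate but verifying that $\hat T$ is $C^1$ \emph{up to and across the singular boundary} $\varepsilon=0$: differentiating the $\varepsilon$-dependent mollification in $\varepsilon$ produces terms carrying the factor $\delta'(\varepsilon)/\delta(\varepsilon)$, which blows up as $\varepsilon\to0$, and taming them requires \emph{both} the cancellation $\int(\phi+s\phi')=0$ \emph{and} a window $\delta$ flat enough at the origin. Reconciling these two demands --- smoothing enough in $\tau$ to gain joint regularity for $\varepsilon>0$, yet degenerating fast enough to stay $C^1$ at $\varepsilon=0$ --- together with the bookkeeping of the domain extension, is where the proof really has to be carried out carefully.
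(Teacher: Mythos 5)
Your construction has a genuine gap precisely at the step you yourself flag as "the heart of the matter." Write $\omega$ for a modulus of continuity of $\partial_\varepsilon T$ in $\tau$ (uniform in $\varepsilon$); since only continuity in $\tau$ is assumed, $\omega$ can be as bad as H\"older or worse, and it is not at your disposal. Your own estimate gives $\partial_\tau\hat T(\varepsilon,\tau)=\dot T(0,\tau)+o(1)+O\bigl(\varepsilon\,\omega(\delta(\varepsilon))/\delta(\varepsilon)\bigr)$, so the limit in \eqref{eqeigspa} and the joint continuity of $\partial_\tau\hat T$ up to $\varepsilon=0$ (hence $\hat T\in C^1$) require $\varepsilon\,\omega(\delta(\varepsilon))/\delta(\varepsilon)\to 0$, which pushes $\delta$ to be \emph{large} (of order $\varepsilon$), whereas your flat-at-zero choices push it to be tiny. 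With $\delta(\varepsilon)=e^{-1/\varepsilon}$ and, say, $\omega(h)=h^{1/2}$, this term is $\varepsilon e^{1/(2\varepsilon)}\to\infty$; with $\delta(\varepsilon)=\varepsilon^2$ it is $\omega(\varepsilon^2)/\varepsilon$, which already fails for $\omega(h)=h^{1/4}$. The claim that "flatness of $\delta$ tames the window-derivative term" is likewise unjustified: that term is bounded by $L|\delta'|+\tfrac{|\delta'|}{\delta}\varepsilon\,\omega(\delta)$, and $\tfrac{|\delta'|}{\delta}$ is $\varepsilon^{-2}$ for $e^{-1/\varepsilon}$, so it is again $\omega$ that decides, not flatness. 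Finally, your last step for \eqref{eqeigspa} invokes a "standing hypothesis" that $\dot T(\varepsilon,\tau)$ exists and converges to $\dot T(0,\tau)$; no such hypothesis appears in the lemma ($T(\varepsilon,\cdot)$ is only continuous for $\varepsilon>0$), so that part of the chain cannot be obtained this way. The two demands you place on $\delta$ (flat at $0$ versus $\varepsilon\,\omega(\delta)/\delta\to0$) are in general incompatible, so the proposal as written does not establish \eqref{eqeigspa} nor the $C^1$ regularity of $\hat T$.

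For comparison, the paper's proof avoids smoothing in $\tau$ altogether: it expands $T(\varepsilon,\tau)=T(0,\tau)+a(\tau)\varepsilon+g(\varepsilon,\tau)$ with $a(\tau)=\partial_\varepsilon T(0,\tau)$ and $g$ uniformly small of higher order, replaces the merely continuous coefficient $a$ by a polynomial $\hat a$ with $\sup_\tau|a-\hat a|\le\rho$ (Stone--Weierstrass), and sets $\hat T(\varepsilon,\tau):=T(0,\tau)+\hat a(\tau)\varepsilon$. Then \eqref{eqeigspa--} is immediate, $|\hat T-T|\le\rho\varepsilon+|g|$ gives \eqref{eqeigspa-}, and $\partial_\tau\hat T=\dot T(0,\tau)+\dot{\hat a}(\tau)\varepsilon\to\dot T(0,\tau)$ gives \eqref{eqeigspa}; note that the bound \eqref{eqeigspa-} only needs to hold up to $\rho\varepsilon$, which is exactly what licenses approximating the $\varepsilon$-slope rather than mollifying in $\tau$. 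If you want to rescue your route, the window must be taken proportional to $\varepsilon$ (with the constant tied to $\rho$) and a symmetric mollifier used to cancel the leading window-derivative term; but that is a different construction from the one you proposed.
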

\begin{proof}
Since the matrix $T$ is continuously differentiable in $\varepsilon$ and continuous in $\tau$,  then it admits a first-order Taylor expansion of the form 
$$ 
T(\varepsilon, \tau) = T(0, \tau)  + a(\tau) \varepsilon + g(\varepsilon,\tau),  $$
where $a : [0,\alpha_o] \rightarrow \mathbb{R}^{n \times n}$ is continuous and $g :  [0,1] \times [0,\alpha_o) \rightarrow \mathbb{R}^{n \times n}$ enjoys the same continuity and smoothness properties as $T$.  Furthermore,   there exists $M>0$ such that, for each $\tau \in [0,\alpha_o]$,  we have  
$$   |g(\varepsilon, \tau)| \leq  M \varepsilon^2   \qquad  \forall  \varepsilon \in [0,1].   $$
Now,  we choose the matrix $\hat{T}$ as 
$$ 
\hat{T}(\varepsilon, \tau) = T(0, \tau)  + \hat{a}(\tau) \varepsilon,  $$
where $\hat{a} : [0,\alpha_o] \rightarrow \mathbb{R}^{n \times n}$ is a continuously differentiable approximation of $a$ on $[0,\alpha_o]$ satisfying 
$$ \sup_{\tau \in [0,\alpha_o]}  \{ | a(\tau) - \hat{a}(\tau) | \} \leq \rho.   $$
Note that to obtain the latter inequality,  we used Stone–Weierstrass theorem stating that every continuous function defined on a closed interval $[0, \alpha_o]$ can be uniformly approximated as closely as desired by a polynomial function  \cite{Stone1948}.

As a result,   \eqref{eqeigspa--} holds.  Furthermore,  we note that  
$$ T(\varepsilon, \tau) - \hat{T}(\varepsilon, \tau)  = (a(\tau) -  \hat{a}(\tau)) \varepsilon + g(\varepsilon, \tau), $$
which implies that  \eqref{eqeigspa-} also holds. 
Finally, \eqref{eqeigspa} holds under \eqref{eqeigspa--} and the continuous differentiability of $\hat{T}$. 
 \end{proof}

Consider the dynamical system 
\begin{align} \label{dynsys}
\dot x = f(x) \qquad x \in \mathbb{R}^n,
\end{align}
where $f: \mathbb{R}^n \rightarrow \mathbb{R}^n$ is continuously differentiable and the origin $x = 0$ is a hyperbolic equilibrium point.
The following lemma allows us to show that the propagation of the local stable manifold $W^s(0)$ using the backward solutions to \eqref{dynsys} is a null-measure set. 

\begin{lemma} \label{clcl1}
Consider the dynamical system \eqref{dynsys} and let $S_o \subset \mathbb{R}^n$ and $T>0$ such that, for each $x_o \in S_o$,  the solution $x(t)$ is well defined on $[0,T]$. 
Furthermore,  for each $t \in [0, T]$,  we  define the  reachable set 
$$  R^b(t,  S_o) := \left\{ y \in \mathbb{R}^n : y = x(t),~x(0) = x_o \in S_o \right\}.  $$ 
Then, if there exists $\tau \in[0,T]$ such that $\mu(R^b(\tau, S_o)) = 0$  then  $\mu(S_o) = 0$, where $\mu(\cdot)$ is the Lebesgue measure of $(\cdot)$. 
\end{lemma}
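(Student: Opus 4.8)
The plan is to exploit the fact that the time-$\tau$ backward flow of \eqref{dynsys} is a well-defined, locally Lipschitz map on a neighbourhood of $R^b(\tau,S_o)$, together with the elementary measure-theoretic fact that locally Lipschitz maps send Lebesgue-null sets to Lebesgue-null sets; since $S_o$ is exactly the image of $R^b(\tau,S_o)$ under this map, the conclusion follows immediately.

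First I would fix notation: let $\phi_t(\cdot)$ denote the local flow of \eqref{dynsys}, so that any solution $x$ with $x(0)=x_o$ satisfies $x(t)=\phi_t(x_o)$ on its maximal interval. If $\tau=0$ the claim is trivial since $R^b(0,S_o)=S_o$, so assume $\tau\in(0,T]$. For every $y\in R^b(\tau,S_o)$ there is, by definition, some $x_o\in S_o$ whose solution $x$ --- defined on $[0,T]\supseteq[0,\tau]$ --- satisfies $x(\tau)=y$; by uniqueness and time-translation invariance, the solution through $y$ is $s\mapsto x(\tau+s)$, which is defined on $[-\tau,T-\tau]$ and in particular reaches $\phi_{-\tau}(y)=x(0)=x_o$. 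Hence $\phi_{-\tau}$ is defined at every point of $R^b(\tau,S_o)$ and maps that set onto $S_o$, i.e. $S_o=\phi_{-\tau}\big(R^b(\tau,S_o)\big)$.

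Next I would upgrade this pointwise statement to a Lipschitz estimate on countably many pieces. Since each solution from $S_o$ is continuous on the compact interval $[0,\tau]$, for $m\in\mathbb{N}$ the sets $S_o^{(m)}:=\{x_o\in S_o:\ |\phi_t(x_o)|\le m \ \ \forall t\in[0,\tau]\}$ cover $S_o$, and correspondingly $R^b(\tau,S_o)=\bigcup_{m\in\mathbb{N}}R^b(\tau,S_o^{(m)})$ with each $R^b(\tau,S_o^{(m)})\subseteq R^b(\tau,S_o)$. On the ball $\{|x|\le m\}$ the Jacobian of $f$ is bounded, say by $L_m$, and a standard Gronwall argument then shows that for any two points $y_1,y_2\in R^b(\tau,S_o^{(m)})$ the corresponding backward solutions remain in $\{|x|\le m\}$ on $[-\tau,0]$ and satisfy $|\phi_{-\tau}(y_1)-\phi_{-\tau}(y_2)|\le e^{L_m\tau}|y_1-y_2|$; that is, $\phi_{-\tau}$ restricted to $R^b(\tau,S_o^{(m)})$ is Lipschitz with constant $e^{L_m\tau}$. (Alternatively one may invoke openness of the flow's domain and the $\mathcal{C}^1$ dependence of solutions on initial data to obtain local Lipschitzness of $\phi_{-\tau}$ on an open neighbourhood of $R^b(\tau,S_o)$.)

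Finally I would conclude by measure theory. A Lipschitz map defined on an arbitrary subset of $\mathbb{R}^n$ sends sets of Lebesgue outer measure zero to sets of Lebesgue outer measure zero: cover the set by countably many balls of total volume $<\epsilon$, whence the images lie in balls of total volume $\le c_n(2L)^n\epsilon$. Since $\mu\big(R^b(\tau,S_o)\big)=0$, each subset $R^b(\tau,S_o^{(m)})$ is null, hence $\mu\big(\phi_{-\tau}(R^b(\tau,S_o^{(m)}))\big)=\mu\big(S_o^{(m)}\big)=0$, and therefore $\mu(S_o)\le\sum_{m\in\mathbb{N}}\mu\big(S_o^{(m)}\big)=0$. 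The only delicate point is the one handled in the third paragraph: a priori $\phi_{-\tau}$ is defined only pointwise on $R^b(\tau,S_o)$ and $S_o$ need not be bounded, so one must decompose $S_o$ into the pieces $S_o^{(m)}$ on which a uniform Lipschitz bound for $\phi_{-\tau}$ is available; everything else is routine.
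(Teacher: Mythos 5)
Your proof is correct, but it takes a different route from the one in the paper. The paper argues by contradiction with purely topological tools: it considers the forward time-$\tau$ map $\phi_\tau$ on $S_o$, invokes continuous dependence on initial data to conclude that $\phi_\tau$ is a homeomorphism onto its image (hence an open map), picks an open neighbourhood $U(x_o)\subset S_o$ of some point of $S_o$, and concludes that $\phi_\tau(U(x_o))$ is a nonempty open subset of the null set $R^b(\tau,S_o)$, a contradiction. You instead work forward-to-backward with a quantitative estimate: you write $S_o=\phi_{-\tau}\bigl(R^b(\tau,S_o)\bigr)$, exhaust $S_o$ by the pieces $S_o^{(m)}$ on which trajectories stay in $\{|x|\le m\}$, obtain a Gronwall--Lipschitz bound $e^{L_m\tau}$ for $\phi_{-\tau}$ on each corresponding piece of the reachable set, and use the standard fact that Lipschitz maps carry Lebesgue-null sets to Lebesgue-null sets, summing over $m$. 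Your argument is in fact the more robust of the two: bare continuity of $\phi_{-\tau}$ (or the homeomorphism property used in the paper) does not by itself transfer null sets, and the paper's step asserting that a set of positive measure contains an open neighbourhood of one of its points is not valid for general measurable $S_o$ (a fat Cantor set has positive measure and empty interior), whereas your Lipschitz estimate is exactly the quantitative ingredient that makes the measure transfer legitimate, and your decomposition into $S_o^{(m)}$ cleanly handles possible unboundedness. The price is a slightly longer argument (Gronwall plus the covering-by-balls lemma), but it avoids any regularity assumption on $S_o$ beyond measurability.
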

\begin{proof}
To find a contradiction,  we assume that $\mu(S_o) > 0$ and 
$\mu (R^b(\tau, S_o)) = 0$ for some $\tau \in [0, T]$.   Next,  we introduce the mapping $\phi_\tau  : S_o \rightarrow \mathbb{R}^n$ such that  $\phi_\tau(x_o) := x(\tau)$,  where $x(\tau)$ is the unique solution to \eqref{dynsys} starting from $x_o$.   
Using \cite[Theorem V.2.1]{hartman2002ordinary},   we conclude that  the mapping $\phi_\tau$  is continuous and clearly the reciprocal mapping satisfies $\phi_\tau^-(x_o) := x(-\tau, x_o)$.   Hence,  $\phi_\tau^-(\cdot)$ is also continuous and therefore $\phi_\tau$ is a homeomorphism; thus,  an open map.   Let us now fix $x_o \in S_o$ arbitrary such that there exists $U(x_o)$ an open set containing   $x_o$ that is contained in $S_o$,  the latter is possible to find since  $ \mu (S_o) \neq 0$.  Let $\phi_\tau(U(x_o))$ be the image of $U(x_o)$ by the homeomorphism $\phi_\tau$.   Since $\phi_\tau$ is a homeomorphism,  $\phi_\tau(U(x_o))$ is an open set containing  $\phi_{\tau}(x_o)$.  Hence,   $\mu(\phi_\tau(U(x_o))) \neq 0$.   However,  $\phi_\tau(U(x_o)) \subset R^b(\tau,  S_o)$ and we already assumed that $\mu (R^b(\tau,  S_o)) =0$, which yields to a contradiction.  
\end{proof}

\section*{Appendix III: Proofs} 

\subsection*{Proof of Lemma \ref{lemalgstab}}

By Assumption \ref{ass4}, $\gamma_o$ is globally orbitally asymptotically stable. Also, $\omega$ is globally attractive. Since $\gamma_o$ and $\gamma_1$ are disconnected, the solutions must converge to either one. Then, to show the statement of the Lemma, we  show that the solutions to \eqref{redsyst} converging to $\gamma_1$ must start from a null measure set which is the global stable manifold $W^s_o(0)$.  
Indeed,  to find a contradiction, we let $x_{mo} \notin W^s_o(0)$  such that the solution $x_m(t)$ to \eqref{redsyst} starting from $x_{mo}$ converges $\gamma_1$.  Now, using \lemstabmani,  we conclude the existence of $r > 0$ and $t_1 > 0$ such that 
\begin{align} \label{eqcontr}
 |x_m (t)| \geq r  \qquad  \forall t \geq t_1.   
\end{align}
As a result,  the solution $x_m(t)$ must converge to $\gamma_1 \backslash B_r$.  Now,  we let a strictly increasing 
sequence of times  $\{t_1, t_2, \cdots, \infty\}$ and the corresponding sequence of points 
$$ y_i := \text{Proj}_{\gamma_1 \backslash B_r}(x_m(t_i)) 
:= \text{argmin} \{  |y - x_m(t_i)| : y \in \gamma_1 \backslash B_r  \}.  $$
Note that,  since $x_m(t)$ converges to $\gamma_1 \backslash B_r$, it follows that  
$$ \lim_{i \rightarrow \infty} |y_i - x_m(t_i)| = 0.  $$ 
Now,  by definition of $\gamma_1$ and since it is compact, we conclude the existence of $T >0$ such that,  each solution $y_{mi}(t)$ to \eqref{redsyst} starting from $y_i$ satisfies 
$$  |y_{mi}(t)|  \leq  r/2  \qquad  \forall  t  \geq  T.   $$
 However, since the right-hand side in \eqref{redsyst} is continuously differentiable,  we the continuous dependence of the solutions on the initial data   \cite[Theorem V.2.1]{hartman2002ordinary} to conclude that,  for each $\epsilon >0$, there exists $i_\epsilon \in \{1,2,...\}$ such that, for each $i \geq i_\epsilon$, we have  
 $$  |y_{mi}(t) - x_m(t + t_i)  |  \leq  \epsilon  \qquad  \forall  t  \in [0, T].   $$
The latter implies that $x_m(t_i)$ must lie inside $B_r$,  
which contradicts \eqref{eqcontr}.

\subsection*{Proof of Lemma \ref{lem1}}
We start noting,  by definition of the  sets $\Omega$ and $\omega$,    that $(x_m, e_v) \in \Omega$ if and only if $e_v = 0$ and $x_m \in \omega$.  As a result,
$$  \left| (x_m, e_v) \right|_\Omega \leq \left| e_v \right| + \left| x_m \right|_\omega.  $$
Then,  to prove the lemma,  we show that

\begin{enumerate}[label={($\star$)},leftmargin=*]
\item \label{itemS00}   there  exists
$\varepsilon_1 \in \mathcal{K}$ such that,  for each $\rho >0$, for each $\varepsilon \leq \varepsilon_1(\rho)$,  and for each initial condition $(x_{mo}, e_{vo}) \in \mathbb{R}^n \times \mathbb{R}^{n(N-1)}$,  the solution $(x_m(t),e_v(t))$ to \eqref{singpertsyst}  satisfies 
\begin{align*} 
\lim_{t \rightarrow \infty} \left(\left| e_v(t) \right| + \left| x_m(t)\right|_{\omega} \right)  \leq \rho.   
\end{align*}  
\end{enumerate} 

In turn, to prove \ref{itemS00},  we first show that
\begin{enumerate}[label={($\star \star$)},leftmargin=*]
\item \label{itemS01}   there  exists $\varepsilon_v \in \mathcal{K}$ such that,  for each $\rho_v>0$,  for each $\varepsilon \leq \varepsilon_v(\rho_v)$, and for each initial condition $(x_{mo}, e_{vo}) \in \mathbb{R}^n \times  \mathbb{R}^{n(N-1)}$,  the solution $(x_m(t),e_v(t))$ to \eqref{singpertsyst} satisfies 
\begin{align} 
\lim_{t \rightarrow \infty} \left| e_v(t) \right|  \leq \rho_v.    \label{eqlim1}
\end{align}  
\end{enumerate} 

To prove \ref{itemS01},  we use the following four properties:

\begin{itemize}
\item  The dynamics of $e_v$ in \eqref{A11} can be expressed as in \eqref{A11biz};  namely, in the following form
\begin{equation*}
\begin{aligned}
\varepsilon \dot{e}_v &  =  - \left( \Lambda \otimes I_n \right) e_v + \varepsilon G_e(0,e_v) \\ & + \varepsilon \left[ G_e(x_m,e_v) -G_e(0,e_v) \right].
\end{aligned}
\end{equation*}

\item  By assumption,  system \eqref{singpertsyst} is globally 
(uniformly in $\sigma := 1/\varepsilon$) ultimately bounded.  That is,  
there exists $\varepsilon^\ast>0$ and  $r>0$ such that, 
for any $R \geq 0$, there exists $T_{R} \geq 0$  such that, 
for each $\varepsilon \leq  \varepsilon^\ast$ and  for each solution 
$(x_m(t),e_v(t))$ to \eqref{singpertsyst} starting from 
$(x_{mo},e_{vo}) \in \mathbb{R}^n \times \mathbb{R}^{n(N-1)}$,  
we have  
\begin{align*}
\left| ({x}_{mo},e_{vo}) \right| \leq R \Longrightarrow 
\left| (x_{m}(t),e_v(t) ) \right| \leq r  \quad \forall t \geq T_{R}.
\end{align*} 

\item We recall from \eqref{eq:born1}-\eqref{eq:born2}
the existence of a constant $d_r > 0$ such that, for each $(x_m,e_v) \in B_r$, we have 
\begin{equation}
\label{eqbornnew}
\begin{aligned}
 \left| G_e(0,e_v) \right|   &
 \leq d_r  \left| e_v \right|,
\\
\left|  G_e(x_m,e_v) - G_e(0,e_v) \right|  & \leq  d_r \left| x_m \right|. 
\end{aligned}
\end{equation}

\item Using Item (iv) in Lemma \ref{fact1},  we conclude that the system 
$$ \dot{e}_v = - \left( \Lambda \otimes I_n \right) e_v $$ 
is exponentially stable and there exists $P \in \mathbb{R}^{(N-1) \times (N-1)}$  symmetric and positive definite such that 
$$  P \Lambda  + \Lambda^\top P \leq - I_{N-1}.  $$
\end{itemize}

As a result,  combining the aforementioned four properties, we conclude that the time derivative of the Lyapunov function candidate    
\begin{align*}
  V_e(e_v) := e_v^\top (P \otimes I_n) e_v,
\end{align*}
under \eqref{eqbornnew} and along the solution $(x_m(t),e_v(t))$ to \eqref{singpertsyst} starting 
from $(x_{mo}, e_{vo}) \in B_R$,  satisfies 
\begin{align*}
\dot{V}_e(e_v(t)) & \leq  d_r  \lambda_{max}(P)  r \left| e_v(t) \right| 
 - \left[ \frac{1}{\varepsilon} - d_r \lambda_{max}(P) \right] \left| e_v(t) \right|^2 \\ &   \qquad \forall t \geq T_R.
\end{align*}
Now,  for $\varepsilon \leq \frac{1}{d_r \lambda_{\max}(P)}$,  we obtain 
\begin{align*}
\dot{V}_e(e_v(t)) & \leq  d_r  \lambda_{max}(P)  r 
 - \frac{\left[ 1 -  (1 + r) d_r \lambda_{max}(P)  \varepsilon  \right]}{\lambda_{max}(P) \varepsilon} V_e(e_v(t)) \\ &  \qquad \forall t \geq T_R.
\end{align*}
As a result,  
\begin{align*}
\lim_{t \rightarrow + \infty} |e_v(t)|^2 &  \leq   \lim_{t \rightarrow + \infty} \frac{V_e(e_v(t))}{\lambda_{min}(Q)} 
\\ & = 
\frac{d_r  \lambda_{max}(P)^2  r \varepsilon}{\left[ \lambda_{min}(Q)  - (1 + r) d_r \lambda_{max}(P) \lambda_{min}(Q)  \varepsilon  \right]}.  
\end{align*}  
Hence,  by introducing the class $\mathcal{K}$ function  
\begin{align*}
\varepsilon_v(\rho_v) & := \min \left\{ \varepsilon^\ast, 
 \frac{ 1 }{ (1 + r) d_r \lambda_{max}(P)  }  \right\}  \\ & \times 
 \frac{\rho_v}{ \frac{d_r  \lambda_{max}(P)^2  r}{(1 + r) d_r \lambda_{max}(P) \lambda_{min}(Q)} +   \rho_v},   
 \end{align*}
we conclude that, for each $\rho_v > 0$ and for each $\varepsilon \leq \varepsilon_v(\rho_v)$,  each solution $(x_m(t),e_v(t))$ to \eqref{singpertsyst} satisfies \eqref{eqlim1},  which proves \ref{itemS01}.

To complete the proof of \ref{itemS00},  we  rewrite the dynamics of $x_m$ in \eqref{A11} as in \eqref{A8}; namely, we have 
\begin{align*}
\dot{x}_m =  F_m (x_m) + G_m(x_m, e_v),
\end{align*}
where 
$$ G_m(x_m, e_v) := {F}_m(x_m,e_v) - F_m (x_m).  $$ 
Next, we recall from \eqref{ineqGm} the existence of $h: \mathbb{R}^n \times \mathbb{R}^{n(N-1)} \to \mathbb{R}_{\geq 0}$ continuous such that  
\begin{equation*} 
\big| G_m(x_m,e_v) \big|  \leq  h(x_m,e_v)|e_v|  \qquad  \forall (x_m,e_v) \in \mathbb{R}^n \times \mathbb{R}^{n(N-1)}.
\end{equation*} 
Hence,   for each $\rho_v > 0$,  for each $\varepsilon \leq 
\varepsilon_v(\rho_v)$,  and for each  solution $(x_m(t),e_v(t))$ to \eqref{singpertsyst} starting from $B_R$,  under the global ultimate boundedness of \eqref{singpertsyst} and \eqref{eqlim1},   we conclude the existence of $T_{R1} >T_R > 0$ and $c_r >0$ such that
$$ \left| G_m(x_m(t), e_v(t)) \right| \leq c_r \rho_v \qquad \forall t \geq T_{R1}.   $$  

Next,  we use Assumption \ref{ass4} to show that,  according to the notation in Appendix II.C,  the decomposition $\omega := \gamma_o \cup \gamma_1$ has no cycles  and we can only have 
$\gamma_o < \gamma_1$.   
Indeed,  since the periodic orbit $\gamma_o$ is asymptotically orbitally stable, we conclude that $R(\gamma_o) = \gamma_o$; hence,  
$ R(\gamma_o) \cap U(\gamma_1) =  \gamma_o \cap U(\gamma_1)$  
and, since $\gamma_o$ is invariant,  compact,  and $\gamma_o \cap 
\gamma_1 = \emptyset$,   it follows that 
$R(\gamma_o) \cap U(\gamma_1) = \emptyset$.  
Thus,  we cannot have  $\gamma_1 < \gamma_o$.  The latter also implies that we cannot have a 2-cycle. 
 Next,  we exclude the possibility of having a 
1-cycle  using asymptotic orbital stability of $\gamma_o$ and by definition of the set $\gamma_1$.   Finally,  the compact disconnected subset $\omega := \gamma_o \cup \{0\}$ is globally attractive for the unperturbed dynamics  \eqref{redsyst}.   Hence,  using \lemAE,  we conclude the existence of a continuously 
differentiable Lyapunov function $V : \mathbb{R}^n \rightarrow \mathbb{R}_{\geq 0}$,  class $\mathcal{K}^{\infty}$ functions $\alpha, \bar{\alpha}, \underline{\alpha}$, and a positive constant $c \geq 0$, such that
$$ \underline{\alpha}\left( \left|x_m\right|_{\omega} \right) \leq V(x_m) \leq \bar{\alpha} \left( \left|x_m\right|_{\omega}+c \right)  $$  
and 
$$  \frac{\partial V}{\partial x_m^\top}(x_m) F_m(x_m)  \leq - \alpha \left( \left|x_m\right|_{\omega} \right)  \qquad \forall x_m \in 
\mathbb{R}^n.   $$

Hence,  along the solution $(x_m(t),e_v(t))$ to \eqref{singpertsyst} starting from $B_R$,  the time-derivative of the Lyapunov function $V$ satisfies the following upper-bound
\begin{align*}
\dot{V}(x_m(t)) &  \leq - \alpha \left( \left|x_m(t)\right|_{\omega} \right) +  \left| \frac{\partial V}{\partial x_m^\top}(x_m(t)) \right|  c_r \rho_v 
\\ & \forall t \geq T_{R1}.   
\end{align*}
Next,  using the global ultimate boundedness of \eqref{singpertsyst} and the continuity of $\frac{\partial V}{\partial x_m^\top}$, we conclude the existence of $b_r$ and $T_{R2} > T_R > 0$ such that
$$ \left| \frac{\partial V}{ \partial x_m^\top}(x_m(t)) \right| \leq b_r \qquad \forall t \geq T_{R2}.  $$ 
As a result,  for all $t \geq \sup \left\{T_{R1}, T_{R2} \right\}$, we have 
$$ \dot{V}(x_m(t)) \leq - \alpha \left( \underline{\alpha}^{-} \left( V(x_m(t)) \right) \right) + b_r c_r  \rho_v.  $$
The last step invokes the comparison lemma \cite[Lemma 2.5]{KHALIL96}, that establishes the existence of $\mu \in \mathcal{K}^{\infty}$  such that 
$$ \lim_{t \rightarrow \infty} V(x_m(t)) \leq \mu \left(b_r c_r  \rho_v \right),  $$ 
and consequently, 
$$ \lim_{t \rightarrow \infty} \left|x_m(t)\right|_{\omega} = \lim_{t \rightarrow \infty} \left|(x_m(t), 0)\right|_{\Omega} \leq \underline{\alpha}^-\left(\mu \left(b_r c_r  \rho_v \right)\right).$$
Finally,  to complete the proof,  it is enough to take  $ \varepsilon_1(\rho) :=   \varepsilon_v (\chi^{-1} (\rho))$, where 
$$   \chi(\rho_v) :=   \underline{\alpha}^-\left(\mu \left(b_r c_r  \rho_v \right)\right) + \rho_v. $$

\subsection*{Proof of Lemma \ref{lem2}}
The proof is based on a direct application of Lemma \ref{thmAnosov}.   The first item in Lemma \ref{thmAnosov} is holds under Assumption \ref{ass1}, 
the second item holds with $e_v = h(x_m) = 0$,  
the third item is satisfied since the boundary-layer model of \eqref{singpertsyst} is the linear system $\dot{e}_v  = - ( \Lambda \otimes I_n ) e_v$ with $\Lambda$ Hurwitz since the graph is connected; see Lemma \ref{fact1},
and the last item in Lemma \ref{thmAnosov} holds under Assumption \ref{ass4}.
\hfill $\blacksquare$

\subsection*{Proof of Lemma \ref{lem3}} 



Consider a periodic orbit $\Gamma_\varepsilon \subset T_\rho$,  for some $\rho \in (0,1]$,  generated by an  $\alpha_{\varepsilon}$-periodic solution to \eqref{singpertsyst} denoted by $\bar{x}_{\varepsilon}(t) := (x_{m\varepsilon}(t), e_{v\varepsilon}(t))$. 
Also,  we let the $\alpha_o$-periodic solution 
to \eqref{redsyst},  denoted by $x_{mo}(t)$,  generating the periodic orbit $\gamma_o$ introduced in  Assumption \ref{ass4}.  We also let $\tilde{\alpha} := \alpha_{\varepsilon} -$ $\alpha_o$.  Finally,  we introduce the following error coordinate 
$ \tilde{x} =\bar{x} - \bar{x}_{\varepsilon} $.

In the coordinates $\tilde{x}$,  we re-express system \eqref{singpertsyst} as follows
\begin{align} \label{eq1:lem3}
\dot{\tilde{x}} = A(\bar{x}_\varepsilon(t)) \tilde{x}  + 
g(\bar{x}_\varepsilon(t), \tilde{x}),
\end{align}
where 
\begin{align*}
 A(\bar{x}_\varepsilon(t))  :=  & 
\begin{bmatrix}  
\sfrac{\partial F_m}{\partial x_m^\top} &   \sfrac{\partial G_m}{\partial e_v^\top}     
\\[12pt]
\sfrac{\partial G_e}{\partial x_m^\top} &  - \sfrac{1}{\varepsilon} (\Lambda \otimes I_n)  + \sfrac{\partial G_e }{\partial e_v^\top}
\end{bmatrix}\vline_{
  \begin{minipage}{20mm}
   \ \\[5mm]   \footnotesize  $\begin{array}{l}
    x_m  =  x_{m\varepsilon}(t)\\
    e_v  =  e_{v\varepsilon}(t),  
  \end{array}$
  \end{minipage}
}
\\[2pt]
%
 g( \bar{x}_\varepsilon(t),  \tilde{x} )  := & \, \bar{F}(\bar{x}_\varepsilon(t) + \tilde{x}) - \bar{F}(\bar{x}_\varepsilon(t)) -   A(\bar{x}_\varepsilon(t)) \tilde{x}.  
 \end{align*}
Note that $g( \bar{x}_\varepsilon(t),  \tilde{x} )$ is continuously differentiable in $\tilde{x}$,  continuous in $\bar{x}_\varepsilon(t)$,  and we can find  $\kappa \in \mathcal{K}$ (independent of $\varepsilon$ since
$\Gamma_\varepsilon \subset T_{\rho} \subset T_{\rho = 1}$) such that 
\begin{align} \label{eqboundNL}
| g( \bar{x}_\varepsilon(t),  \tilde{x} ) |  \leq  \kappa(\tilde{x}) |\tilde{x}|.   
\end{align}  
To prove the lemma,  we follow the following steps:

\begin{enumerate}
\item We start re-describing  \eqref{eq1:lem3} using the new time scale $\tau := \frac{\alpha_o}{\alpha_\varepsilon} t$, which gives us 
\begin{equation*}
\begin{aligned} 
\tilde{x}' & := \frac{d \tilde{x}}{d \tau} = \frac{\alpha_\varepsilon}{\alpha_o} A \left(\bar{x}_\varepsilon \left(\frac{\alpha_\varepsilon}{\alpha_o} \tau\right) \right) \tilde{x} 
\\ &  := A_1 \left(\bar{x}_1(\tau)\right) \tilde{x} + \left(\frac{\alpha_\varepsilon}{\alpha_o} \right)
 g\left( \bar{x}_1 (\tau),   \tilde{x}_\varepsilon \right), 
\end{aligned}
\end{equation*}
where 
$$ A_{1}(\cdot) := \frac{\alpha_\varepsilon}{\alpha_o} A(\cdot)  \quad  \text{and} \quad  \bar{x}_{1}(\tau) := \bar{x}_{\varepsilon} \left(\frac{\alpha_\varepsilon}{\alpha_o} \tau\right).  $$

\item  Next,  we re-express  $A_1(\bar{x}_1(\tau))$  as 
\begin{equation}
\label{eqsysLin1}
\begin{aligned} 
A_1(\bar{x}_1(\tau)) = A \left(\bar{x}_o(\tau)\right)  +      \Delta_{1}(\bar{x}_1(\tau),  \bar{x}_o(\tau),  \varepsilon,  \rho, \tilde{\alpha}), 
\end{aligned}
\end{equation}
where  $ \bar{x}_o(\tau) := (x_{mo}(\tau),0)$ and 
\begin{align*}
& \Delta_{1}(\bar{x}_1(\tau),  \bar{x}_o(\tau),  \varepsilon,  \rho) := \\ & A_{1}(\bar{x}_1(\tau)) - 
A_{1} (\bar{x}_o(\tau)) + \left( \frac{\tilde{\alpha}}{\alpha_o}  \right) A(\bar{x}_o(\tau)). 
\end{align*}
Furthermore,  we show that 
\begin{equation}
\label{eqDelta}
\begin{aligned} 
\lim_{(\rho,\varepsilon,\tilde{\alpha}) \rightarrow (0,0,0)} & \left| \Delta_{1}(\bar{x}_1(\tau),\bar{x}_o(\tau), \varepsilon, \rho,\tilde{\alpha}) \right|  = 0 \\ &   \forall \tau \in [0,\alpha_o]
\end{aligned}
\end{equation} 
by following two steps.

\begin{itemize}
\item  We first apply Tikhonov Theorem (see \thmtykho\ and \RemThyk) on the singularly perturbed system \eqref{singpertsyst},  to conclude the existence of 
$\varepsilon^\ast>0$ and $M>0$ such that, for each $\varepsilon \in [0, \varepsilon^\ast]$,  the solution  
$\bar{x}_{\varepsilon}(t)$ to \eqref{singpertsyst} and the signal $\bar{x}_o(t)$,  with $x_{mo}(t)$ solution to \eqref{redsyst},   satisfy
\begin{equation}
\label{eqtheyko} 
\begin{aligned} 
| \bar{x}_{\varepsilon}(0) - \bar{x}_{o}(0) | \leq \rho  &
\Longrightarrow
| \bar{x}_{\varepsilon}(t) - \bar{x}_{o}(t) | \leq M \rho \\ & \qquad \forall t \in [0, \alpha_\varepsilon].
\end{aligned}
\end{equation}

\item Now,  using \eqref{eqtheyko},  we conclude that, for each $\tau \in [0, \alpha_o]$,  we have 
\begin{equation}
\label{chgtdepreuve1}
\begin{aligned} 
\bar{x}_\varepsilon \left( \frac{\alpha_\varepsilon}{\alpha_o}\tau \right) &  =  \bar{x}_o \left( \frac{\alpha_\varepsilon}{\alpha_o}\tau \right) + O(\rho)  \nonumber \\ & 
= \bar{x}_{o}(\tau) - \Delta_{\bar{x}_o} \left( \tau, \varepsilon, \tilde{\alpha} \right) + O(\rho),
\end{aligned}
\end{equation}
where 
$$ \Delta_{\bar{x}_{o}} \left( \tau, \varepsilon, \tilde{\alpha} \right) :=   \bar{x}_o \left( \frac{\alpha_\varepsilon}{\alpha_o}  \tau  - \left(  \frac{\tilde{\alpha}}{\alpha_o}  \right) \tau \right) - \bar{x}_o \left( \frac{\alpha_\varepsilon}{\alpha_o} 
\tau \right).  $$

Under Item (i) in Lemma \ref{fact1},  there exists $M_\rho >0$ such that,  for each $\tau \in [0,\alpha_o]$,   we have 
\begin{equation*}
\begin{aligned} 
\left| \Delta_{\bar{x}_o} \left( \tau, \varepsilon, \tilde{\alpha} \right) \right| 
\leq & \sup_{0 \leq t \leq \alpha_\varepsilon}  
\left|  \dot{\bar{x}}_o \left( t \right) \right| \left| \tilde{\alpha} \right| 
\\ & \leq  \sup_{0 \leq t \leq \alpha_\varepsilon} \left| \bar{F} (\bar{x}_{o}(t)) \right|  \left| \tilde{\alpha}  \right|  \\ &  \leq  M_\rho   \left| \tilde{\alpha} \right|,  
\end{aligned}
\end{equation*}
where, for $\bar{x}_o(t) := (x_m(t),0)$, we have 
$$ \bar{F} (\bar{x}_o(t)) :=   
\begin{bmatrix}
 {F}_m(x_m(t)) + G_m(x_m(t),0)   
 \\
 G_e(x_m(t),0)
 \end{bmatrix}.  $$
As a result,  since both $\bar{x}_{1}(\tau)$ and $\bar{x}_{o}(\tau)$ are $\alpha_o$-periodic,  we conclude that
\begin{align} \label{chgtdepreuve8}
 \lim_{(\rho,\varepsilon,\tilde{\alpha}) \rightarrow (0,0,0)} | \bar{x}_{1}(\tau) - \bar{x}_{o}(\tau) | = 0 \qquad \forall \tau \geq 0.
\end{align}
\end{itemize}
As a result,  using \eqref{chgtdepreuve8} and Item (i) in Lemma \ref{fact1},  we conclude that \eqref{eqDelta} holds. 

\item  Now,  we decompose the matrix $A(\bar{x}_o(\tau))$ as 
\begin{align*}
A(\bar{x}_o(\tau)) & := 
\begin{bmatrix}  
 \sfrac{\partial F_m}{\partial x_m^\top}(x_{mo}(\tau)) 
&   0
\\[4pt]
0 &  0
\end{bmatrix} 
 + \frac{1}{\varepsilon} 
A_{\varepsilon}(\bar{x}_o(\tau)),
\end{align*}
where 
\begin{align*}
A_{\varepsilon} 
 & :=  
\begin{bmatrix}  
0  &  0   
\\[4pt]
0 & - \Lambda \otimes I_n
\end{bmatrix} 
+ \varepsilon
\begin{bmatrix}  
0  &   \sfrac{\partial G_m}{\partial e_v^\top}     
\\[9pt]
\sfrac{\partial G_e}{\partial x_m^\top} &  \sfrac{\partial G_e }{\partial e_v^\top}
\end{bmatrix}\vline_{
  \begin{minipage}{20mm}
   \ \\[1mm]   \footnotesize  $\begin{array}{l}
    x_m  =  x_{mo}(\tau)\\
    e_v  =  0.
  \end{array}$
  \end{minipage}
}  
\end{align*}
Under \lemregpert,  we know that  $A_{\varepsilon}(\bar{x}_o(\tau))$ admits $n$ eigenvalues of the form 
\begin{align*} 
\lambda_{j}(A_{\varepsilon}(\bar{x}_o(\tau))) & =    o(\varepsilon) \qquad \forall j = \{1,2,...,n\}.
\end{align*}
Furthermore,  we consider the non-singular matrix $T(\varepsilon,\tau)$ transforming $A_\varepsilon(\bar{x}_o(\tau))$, as in \Lemspecres, into the  block-diagonal form
 \begin{align*}
J_{\varepsilon}(\tau) :=  
\begin{bmatrix} 
\Lambda_o(A_\varepsilon(\bar{x}_o(\tau))) & 0_{n \times n(N-1)}  
\\ 
 0_{n(N-1) \times n}  & \Lambda_1(A_\varepsilon(\bar{x}_o(\tau))) \end{bmatrix},
\end{align*}
 where $ \Lambda_o(A_\varepsilon(\bar{x}_o(\tau))) \in \mathbb{R}^{n \times n} $  is the representation of $A_\varepsilon(\bar{x}_o(\tau))$ on the invariant spectral subspace corresponding to $ \{  \lambda_{j}(A_{\varepsilon}(\bar{x}_o(\tau))) \}^n_{j = 1}$.  Similarly,  
 $\Lambda_1(A_\varepsilon(\bar{x}_o(\tau))) \in \mathbb{R}^{n(N-1) \times n(N-1)} $  is the representation of $A_\varepsilon(\bar{x}_o(\tau))$ on the invariant spectral subspace corresponding to the remaining eigenvalues. 
 
Now,  for $\varepsilon > 0$ sufficiently small, we use \lemcontin\ to conclude that the transformation matrix $T(\varepsilon, \tau)$ is continuous.  Furthermore, using \lemanal,  we conclude that $T(\varepsilon, \tau)$ is analytic (smooth) in $\varepsilon$.  As a result,  for each $\tau \in [0,\alpha_o)$,     we have $\lim_{\varepsilon \rightarrow 0} T(\varepsilon, \tau)  = I_{nN}$ and 
\begin{equation} 
\label{eqmat}
 \begin{aligned} 
 \lim_{\varepsilon \rightarrow 0} 
 \frac{1}{\varepsilon}  \Lambda_o(A_\varepsilon(\bar{x}_o(\tau)))  = 0,  \\
 \lim_{\varepsilon \rightarrow 0}  \left[ \Lambda_1(A_\varepsilon(\bar{x}_o(\tau)))  +  (\Lambda \otimes I_n) \right]  = 0.  
 \end{aligned}
 \end{equation}
 
\item  At this point,  we introduce the change of coordinates  
$\tilde{x}_\varepsilon :=	\hat{T} (\varepsilon, \tau) \tilde{x}$,
where $\hat{T}$ is a continuously  differentiable non-singular approximation of $T$ chosen according to Lemma \ref{lemtech}.   In the new coordinates,  \eqref{eqsysLin1} becomes 
\begin{align*} 
 \tilde{x}'_\varepsilon & =  \hat{T} \begin{bmatrix}  
  \sfrac{\partial F_m}{\partial x_m^\top}(x_{mo} (\tau)) &   0
\\[4pt]
0 &  0
\end{bmatrix}   
{\hat{T}}^- \tilde{x}_\varepsilon   + \hat{T} A_{\varepsilon}(\bar{x}_o(\tau)) {\hat{T}}^-  \tilde{x}_\varepsilon 
\\ & 
+  \left[ \hat{T}  \Delta_1 {\hat{T}}^-  + \dot{\hat{T}} {\hat{T}}^- \right]  \tilde{x}_\varepsilon  +
\left(\frac{\alpha_\varepsilon}{\alpha_o} \right)
\hat{T} g\left( \bar{x}_1 (\tau),  \hat{T}^- \tilde{x}_\varepsilon \right). 
\end{align*}
The latter can be further expressed as 
\begin{align*} 
\tilde{x}'_\varepsilon & =    \begin{bmatrix}  
  \sfrac{\partial F_m}{\partial x_m^\top}(x_{mo} (\tau)) &   0
\\[4pt]
0 &  0
\end{bmatrix}   
 \tilde{x}_\varepsilon  +  \frac{J_{\varepsilon}(\tau)}{\varepsilon}  \tilde{x}_\varepsilon  
\\ & 
 + [T - I_{nN} ]  \begin{bmatrix}  
 \sfrac{\partial F_m}{\partial x_m^\top}(x_{mo} (\tau)) &   0
\\[4pt]
0 &  0
\end{bmatrix}   
[T^-  - I_{nN}]  \tilde{x}_\varepsilon
\\ & +  \left[ \hat{T}  \Delta_1 {\hat{T}}^-  + \dot{\hat{T}} {\hat{T}}^- \right]  \tilde{x}_\varepsilon  
\\ & +  [\hat{T} - T ]  A(\bar{x}_o(\tau)) [\hat{T}^- - T^- ]  \tilde{x}_\varepsilon 
\\ &
+ \left(\frac{\alpha_\varepsilon}{\alpha_o} \right)
\hat{T} g\left( \bar{x}_1 (\tau),  \hat{T}^- \tilde{x}_\varepsilon \right),
\end{align*}
which allows us to write
\begin{align*} 
 \tilde{x}'_\varepsilon &
= 
 \begin{bmatrix}  
\sfrac{\partial F_m}{\partial x_m^\top}(x_{mo} (\tau)) &   0
\\[4pt]
0 &  \frac{1}{\varepsilon} \Lambda_1(A_\varepsilon(\bar{x}_o(\tau)))
\end{bmatrix}  \tilde{x}_\varepsilon
\\ & +
 \begin{bmatrix}  
 \frac{1}{\varepsilon} \Lambda_o(A_\varepsilon(\bar{x}_o(\tau)))  &   0
\\[4pt]
0 & 0
\end{bmatrix}  \tilde{x}_\varepsilon
 \\ & + [T - I_{nN} ]  \begin{bmatrix}  
\sfrac{\partial F_m}{\partial x_m^\top}(x_{mo} (\tau)) &   0
\\[4pt]
0 &  0
\end{bmatrix}   
[T^-  - I_{nN}]  \tilde{x}_\varepsilon
 \\ &
+  \left[ \hat{T}  \Delta_1 {\hat{T}}^-  + \dot{\hat{T}} {\hat{T}}^- \right]  \tilde{x}_\varepsilon  
\\ &
+  [\hat{T} - T ]  A(\bar{x}_o(\tau)) [\hat{T}^- - T^- ]  \tilde{x}_\varepsilon 
\\ &
+ \left(\frac{\alpha_\varepsilon}{\alpha_o} \right)
\hat{T} g\left( \bar{x}_1 (\tau),  \hat{T}^- \tilde{x}_\varepsilon \right).
\end{align*}

Finally,  we let 
\begin{align*}
& A_2(\tau,\varepsilon)  := \\ &
\begin{bmatrix}
\sfrac{\partial F_m}{\partial x_m^\top}(x_{mo} (\tau)) &   0 \\
0 &  \frac{ (\Lambda \otimes I_n)  -  \left[ \Lambda_1(A_\varepsilon(\bar{x}_o(\tau))) + (\Lambda \otimes I_n) \right] }{ - \varepsilon}
\end{bmatrix},
\end{align*}
\begin{align*}
& A_3(\tau, \rho, \varepsilon,\tilde{\alpha})  :=  
\begin{bmatrix}  
 \sfrac{\Lambda_o(A_\varepsilon(\bar{x}_o(\tau)))}{\varepsilon}  &   0
\\[4pt] 
0 &   0
\end{bmatrix}  
  \\ & + 
 [T - I_{nN} ]  \begin{bmatrix}  
\sfrac{\partial F_m}{\partial x_m^\top}(x_{mo} (\tau)) &   0 
\\
0 &  0
\end{bmatrix}   
[T^-  - I_{nN}] 
 \\ &
+  \left[ \hat{T}  \Delta_1 {\hat{T}}^-  + \dot{\hat{T}} {\hat{T}}^- \right]    +  [\hat{T} - T ]  A(\bar{x}_o(\tau)) [\hat{T}^- - T^- ],
\end{align*}
\begin{align*}
h_\varepsilon(\tau, \tilde{x}_\varepsilon)   := \left(\frac{\alpha_\varepsilon}{\alpha_o} \right)
\hat{T} g\left( \bar{x}_1 (\tau),  \hat{T}^- \tilde{x}_\varepsilon \right).
 \end{align*}
 
\item  As a last step,   applying Floquet Theory,   we conclude the existence of  a non-singular 
$\alpha_o$-periodic matrix $P_o : \mathbb{R}_{\geq 0} \rightarrow \mathbb{R}^{n \times n}$ and $B_o \in \mathbb{R}^{n \times n}$,  such that 
$$  \dot{P}_o(\tau) +  P_o(\tau)^- \frac{\partial F_m(x_{mo} (\tau))}{\partial x_m^\top} P_o(\tau) = B_o.   
$$

Now, using the change of coordinates $y := \mathcal{P}(\tau)^- \tilde{x}_\varepsilon$,  with  
$$ \mathcal{P} (\tau) :=  \text{blkdiag} \left\{ P_o(\tau), I_{n(N-1)}  \right\},  $$ 
 we obtain 
\begin{align} \label{systrans}
y' = B(\varepsilon,  \tau)  y  +  \bar{A}_3(\tau,\rho, \varepsilon, \tilde{\alpha})  y + 
\mathcal{P}(\tau)^-  h_\varepsilon \left(   \tau ,   \mathcal{P}(\tau) y \right),  
\end{align}
where 
\begin{align*}
 \bar{A}_3(\tau,\rho, \varepsilon, \tilde{\alpha}) & :=   \mathcal{P}(\tau)^- A_3(\tau,\rho, \varepsilon, \tilde{\alpha}) \mathcal{P}(\tau),
\\ 
  B(\varepsilon, \tau) & :=  \begin{bmatrix} B_o & 0 \\ 0 &  \frac{  ( \Lambda \otimes I_n)  - \left[ \Lambda_1(A_\varepsilon(\bar{x}_o(\tau))) + (\Lambda \otimes I_n) \right] }{- \varepsilon} \end{bmatrix}. 
 \end{align*} 
\end{enumerate}

 Note that $\bar{A}_3$ is continuous in its arguments and,  using  \eqref{eqDelta},  \eqref{eqmat},  and Lemma \ref{lemtech},  we conclude that 
$$ \lim_{(\rho,\varepsilon,\tilde{\alpha}) \rightarrow 0} |\bar{A}_3(\tau, \varepsilon, \rho, \tilde{\alpha})| = 0 \qquad  \forall \tau \in [0,\alpha_o].   $$
 Furthermore,  for the block-diagonal matrix $B(\varepsilon, \tau)$, we use Assumption \ref{ass4} and Lemma \ref{lemfloq},  to conclude that the upper block  $B_o$ is Hurwitz.  Moreover,  the lower block $ \frac{  ( \Lambda \otimes I_n)  - \left[ \Lambda_1(A_\varepsilon(\bar{x}_o(\tau))) + (\Lambda \otimes I_n) \right] }{- \varepsilon}$,  for sufficiently small $\varepsilon>0$,  has all its characteristic multipliers  inside the unit circle since   $\Lambda$ is Hurwitz and $$ \lim_{\varepsilon \rightarrow 0} \left[ \Lambda_1(A_\varepsilon(\bar{x}_o(\tau))) + (\Lambda \otimes I_n) \right] = 0 \qquad \forall \tau \in [0,\alpha_o].  $$ 
Finally,  $h_\varepsilon(\tau,   \tilde{x}_\varepsilon)$ is continuous in $\tau$ and continuously differentiable in $ \tilde{x}_\varepsilon$, and under \eqref{eqboundNL},  we can find $\kappa_h \in \mathcal{K}$  such that $$  \mathcal{P}(\tau)^-  h_\varepsilon \left(   \tau ,   \mathcal{P}(\tau) y \right)  \leq \kappa_h(|y|) |y|  \qquad \forall \tau \in [0, \alpha_o].   $$ 
 
Hence, we can find $\rho^{**}>0$ and $\varepsilon^{**}>0$ such that, for each $\varepsilon  \in (0,\varepsilon^{**}]$, the origin $y = 0$ for \eqref{systrans}  is uniformly exponentially stable on the set $\{ y \in \mathbb{R}^{nN} : |y| \leq  \rho^{**}  \} $.

\subsection*{Proof of Lemma \ref{lem6}}
To establish the proof,  we start analyzing the linearization of \eqref{singpertsyst} around the origin,  which is given by  
$$ \dot{\bar{x}} =  (A_\varepsilon/\varepsilon)  \bar{x},  $$
where 
$$ A_\varepsilon :=  
\left\{
\begin{bmatrix}  
0&  0
\\ & \\
0 &  - (\Lambda \otimes I_n)  
\end{bmatrix}  
+ \varepsilon
\begin{bmatrix}  
 \sfrac{\partial F_m}{\partial x_m^\top}(0) &   \sfrac{\partial G_m }{\partial e_v^\top}(0,0)     \\ & \\
 \sfrac{\partial G_e}{\partial x_m^\top}(0) &  \sfrac{\partial G_e}{\partial e_v^\top}(0,0)     
\end{bmatrix}   \right\}.  $$ 

Using \lemregpert,  we know that  $A_{\varepsilon}$ admits $n$ eigenvalues of the form 
\begin{align*} 
\lambda_{j}(A_{\varepsilon}) & =   \lambda_j\left(  \frac{\partial F_m}{\partial x_m^\top}(0)  \right) \varepsilon +  o(\varepsilon) \qquad \forall j = \{1,2,...,n\}.
\end{align*}
Furthermore,  we consider the non-singular matrix 
$T(\varepsilon)$ transforming  $A_\varepsilon$,  as in \Lemspecres,  into the  block-diagonal form
 \begin{align*}
J_{\varepsilon} :=  
\begin{bmatrix} 
\begin{bmatrix} 
\Lambda_u(A_\varepsilon)  & 0
\\
0 &  \Lambda_s(A_\varepsilon) 
\end{bmatrix}
& 0_{n \times n(N-1)}  
\\ 
0_{n(N-1) \times n}  & \Lambda_1(A_\varepsilon) 
\end{bmatrix},
\end{align*}
 where $ \Lambda_u(A_\varepsilon) \in \mathbb{R}^{k \times k}$  
 is the representation of $A_\varepsilon$ on the spectral subspace corresponding to the unstable eigenvalues in 
 $\{\lambda_{j}(A_{\varepsilon})\}^n_{j = 1}$,  $ \Lambda_s(A_\varepsilon) \in \mathbb{R}^{(n-k) \times (n-k)}$  
 is the representation of $A_\varepsilon$ on the spectral subspace corresponding to the stable eigenvalues in 
 $\{\lambda_{j}(A_{\varepsilon})\}^n_{j = 1}$.  Similarly,  $\Lambda_1(A_\varepsilon) \in \mathbb{R}^{n(N-1) \times n(N-1)}$  is the representation of $A_\varepsilon$ on the spectral subspace corresponding to the remaining eigenvalues of $A_\varepsilon$.   
 
 For $\varepsilon > 0$ sufficiently small,  we use  \lemanal\ to conclude that $T(\varepsilon)$ is analytic (smooth) in $\varepsilon$.  As a result,  we have 
\begin{equation} 
\label{equseful}
 \begin{aligned} 
\Lambda_u(A_\varepsilon) &  = \varepsilon
\left[   \frac{\partial F_m}{\partial x_m^\top}(0) \right]_u  + o(\varepsilon), 
\\
\Lambda_s(A_\varepsilon) &  = \varepsilon
\left[   \frac{\partial F_m}{\partial x_m^\top}(0) \right]_s + o(\varepsilon), 
\\
  \Lambda_1(A_\varepsilon)  &  = -(\Lambda \otimes I_n) + O(\varepsilon).    
 \end{aligned}
 \end{equation}
 where $\left[  \frac{\partial F_m}{\partial x_m^\top}(0) \right]_u \in \mathbb{R}^{k \times k}$ is a representation $\frac{\partial F_m}{\partial x_m^\top}(0)$ on its unstable subspace and  
 $\left[ \frac{\partial F_m}{\partial x_m^\top}(0) \right]_s \in  \mathbb{R}^{(n-k) \times (n-k)}$ is a representation $ \frac{\partial F_m}{\partial x_m^\top}(0)$ on its stable subspace.
 
 According to the aforementioned  properties,  we conclude the existence of  $\varepsilon^{\star} > 0$ such that,  for each 
 $\varepsilon \in (0,\varepsilon^{\star}]$,  the origin is a hyperbolic equilibrium  for \eqref{singpertsyst}.   On the other hand,  following the notation of \lemstabmani,  we conclude that
\begin{equation}
\label{eqImpprop}
\begin{aligned}
r_s(A_\varepsilon/\varepsilon) & := \{ \min |\Re(\lambda_j(A_\varepsilon/\varepsilon))| : \Re(\lambda_j(A_\varepsilon/\varepsilon)) < 0  \} 
\\ & = r_s \left(  \frac{\partial F_m}{\partial x_m^\top}(0) \right) + O(\varepsilon), 
\\
r_u(A_\varepsilon/\varepsilon) & := \{ \min |\Re(\lambda_j(A_\varepsilon/\varepsilon))| : \Re(\lambda_j(A_\varepsilon/\varepsilon)) > 0  \}
\\ &
= r_u\left( \frac{\partial F_m}{\partial x_m^\top}(0) \right) + O(\varepsilon).
\end{aligned}
\end{equation}
Moreover,  the forward and the backward overshoots of $A_\varepsilon/\varepsilon$ satisfy 
\begin{equation}
\label{eqImpprop1}
\begin{aligned} 
c_s(A_\varepsilon/\varepsilon) & = c_s(A_\varepsilon) = c_s(J_\varepsilon) = c_s \left( \text{blkdiag} \{ \Lambda_s(A_\varepsilon), \Lambda_1 (A_\varepsilon) \}  \right),
\\
c_u(A_\varepsilon/\varepsilon) & = c_u(A_\varepsilon) = c_u(J_\varepsilon) = c_u(\Lambda_u(A_\varepsilon)).
\end{aligned}
\end{equation}
Using \eqref{equseful}, we conclude that for $\varepsilon^{*}$ small enough,  the forward and backward overshoots $(c_u(\Lambda_u(A_\varepsilon)),  c_s(\Lambda_s(A_\varepsilon))  )$ can be chosen as 
\begin{equation*}
\begin{aligned} 
 c_s(\Lambda_s(A_\varepsilon))  & = c_s \left( \left[ \frac{\partial F_m}{\partial x_m^\top}(0)  \right]_s \right) + c_s \left( \Lambda  \right),
\\
 c_u(\Lambda_u(A_\varepsilon)) & = c_u  \left(  \left[ \frac{\partial F_m}{\partial x_m^\top}(0)  \right]_u  \right)  \qquad \qquad  \qquad \forall \varepsilon \in (0,\varepsilon^*].
\end{aligned}
\end{equation*}

Now,  if we let
$$ g(\bar{x}) := \bar{F} (\bar{x})  - \left(A_\varepsilon/\varepsilon\right) \bar{x},  $$ 
we conclude that $g$ does not depend on $\varepsilon$ and we can find $\kappa \in \mathcal{K}$ such that 
\begin{align} \label{eqDS2bis}
|g(\bar{x})| \leq \kappa(|\bar{x}|) |\bar{x}|.
\end{align}

As a result,  under \eqref{eqImpprop}, \eqref{eqImpprop1}, and \eqref{eqDS2bis},  we conclude that,  for $\varepsilon^\star > 0$ sufficiently small,  we can find $\gamma > 0$ and $\Delta > 0$ such that,  for each $\varepsilon \in (0, \varepsilon^\star]$,  we have    
\begin{align*}
& c_s(A_\varepsilon/\varepsilon) \gamma 
\\ &
+ \kappa(\gamma) \left( \frac{c_s(A_\varepsilon/\varepsilon)}{r_s(A_\varepsilon/\varepsilon) - \mu(A_\varepsilon/\varepsilon)}  + \frac{c_u(A_\varepsilon/\varepsilon)}{r_u(A_\varepsilon/\varepsilon) + \mu(A_\varepsilon/\varepsilon)} \right) \Delta 
\\ &  \leq \Delta.  
\end{align*}
Hence,  using \lemstabmani,  Items (i) and (ii) in Lemma \ref{lem6} follow with $U := B_\gamma$ and $r := \gamma/2$.
  
To prove Item (iii),  we apply Tikhonov Theorem---see \thmtykho\ and \RemThyk---to conclude that,  given $T>0$,  we can find  $\rho^*>0$ and $\varepsilon^\ast>0$ sufficiently small and $M>0$ such that, for each $\varepsilon \in [0, \varepsilon^\ast]$,  the solution  
$\bar{x}(t)$ to \eqref{singpertsyst} and the signal $\bar{x}_o(t) := (x_{mo}(t), 0)$,  with $x_{mo}(t)$ solution to \eqref{redsyst},   satisfy
\begin{align*} 
| \bar{x}(0) - \bar{x}_{o}(0) | \leq \rho^* 
\Longrightarrow
| \bar{x}(t) - \bar{x}_{o}(t) | \leq M \rho^* \quad \forall t \in [0, T].
\end{align*}
Now,  we pick $T>0$ as the largest time a solution $x_{m}(t)$ to \eqref{redsyst} starting from $\Gamma_1 \backslash B_r$ takes to enter the ball $B_{r/2}$.  Such a $T>0$ always exists by definition of the set $\Gamma_1$ and is finite since the set $\Gamma_1$ is compact.  By taking $\rho^* = \frac{r}{3M}$, we conclude that 
$|\bar{x}(T)|$ must be inside $B_r$. 


\bibliographystyle{ieeetr}


\begin{thebibliography}{10}

\bibitem{chow1982}
E.~J.~H.~Chow, {\em Time-Scale Modeling of Dynamic Networks with Applications
  to Power Systems}.
\newblock No.~46 in Lecture Notes in Control and Information Sciences,
  Heidelberg: Springer Verlag, 1st.~ed., 1982.

\bibitem{heylighen1989self}
F.~Heylighen, ``Self-organization, emergence and the architecture of
  complexity,'' in {\em Proc. 1st Europ. Conf. Syst. Sc.}, vol.~18, pp.~23--32,
  1989.

\bibitem{Silberstein2002-SILREA}
M.~Silberstein, ``Reduction, emergence and explanation,'' in {\em The Blackwell
  Guide to the Philosophy of Science} (P.~K. Machamer and M.~Silberstein,
  eds.), pp.~80--107, Cambridge: Blackwell, 2002.

\bibitem{WEIBOOK}
W.~Ren and R.~W. Beard, {\em Distributed consensus in multi-vehicle cooperative
  control}.
\newblock London, U.K.: Springer verlag, 2008.

\bibitem{Arecchi1997}
F.~T. Arecchi, {\em A Critical Approach to Complexity and Self Organization},
  ch.~in {Mathematical Undecidability, Quantum Nonlocality and the Question of
  the Existence of God}, pp.~59--81.
\newblock A. Driessen and A. Su\'arez, Eds., Dordrecht, Netherlands: Springer,
  1997.

\bibitem{arcak_TAC2007_passive-networks_4287131}
M.~Arcak, ``Passivity as a design tool for group coordination,'' {\em IEEE
  Trans. Automat. Contr.}, vol.~52, no.~8, pp.~1380--1390, 2007.

\bibitem{isidori-marconi2014_6819823}
A.~Isidori, L.~Marconi, and G.~Casadei, ``Robust output synchronization of a
  network of heterogeneous nonlinear agents via nonlinear regulation theory,''
  {\em IEEE Trans. Automat. Contr.}, vol.~59, no.~10, pp.~2680--2691, 2014.

\bibitem{MoreauCDC04}
L.~Moreau, ``Stability of continuous-time distributed consensus algorithms,''
  {\em 43rd IEEE Conf. Dec. Control}, vol.~4, pp.~3998 --4003, 2004.

\bibitem{chowdhury2016persistence}
N.~R. Chowdhury, S.~Sukumar, and N.~Balachandran, ``Persistence-based
  convergence rate analysis of consensus protocols for dynamic graph
  networks,'' {\em Europ. J. Contr.}, vol.~29, pp.~33--43, 2016.

\bibitem{olfati2004consensus}
R.~R. Olfati-Saber and R.~M. Murray, ``Consensus problems in networks of agents
  with switching topology and time-delays,'' {\em IEEE Trans. Automat. Contr.},
  vol.~49, no.~9, pp.~1520--1533, 2004.

\bibitem{Morarescu-switching_9105084}
B.~Adhikari, I.-C. Mor{\u{a}}rescu, and E.~Panteley, ``An emerging dynamics
  approach for synchronization of linear heterogeneous agents interconnected
  over switching topologies,'' {\em IEEE Contr. Syst. Lett.}, vol.~5, no.~1,
  pp.~43--48, 2021.

\bibitem{al:SCLCHANNELS}
A.~Teel, A.~Lor\'{\i}a{}, E.~Panteley, and D.~Popovi\'c, ``Smooth time-varying
  stabilization of driftless systems over communication channels,'' {\em Syst.
  Contr. Lett.}, vol.~55, no.~12, pp.~982--991, 2006.

\bibitem{casadei2019_8713390}
G.~Casadei, D.~Astolfi, A.~Alessandri, and L.~Zaccarian, ``Synchronization in
  networks of identical nonlinear systems via dynamic dead zones,'' {\em IEEE
  Contr. Syst. Lett.}, vol.~3, no.~3, pp.~667--672, 2019.

\bibitem{martin2016time}
S.~Martin, I.-C. Mor{\u{a}}rescu, and D.~Ne{\v{s}}i{\'c}, ``Time scale modeling
  for consensus in sparse directed networks with time-varying topologies,'' in
  {\em IEEE 55th Conf. Dec. Contr.}, pp.~7--12, IEEE, 2016.

\bibitem{altafini2012_6329411}
C.~Altafini, ``Consensus problems on networks with antagonistic interactions,''
  {\em IEEE Trans. Automat. Contr.}, vol.~58, no.~4, pp.~935--946, 2013.

\bibitem{pogromsky2011_6160437}
A.~Pogromsky, N.~Kuznetsov, and G.~Leonov, ``Pattern generation in diffusive
  networks{: How} do those brainless centipedes walk?,'' in {\em Proc. 50th
  IEEE Conf. Dec. Contr. and Europ. Contr. Conf.}, pp.~7849--7854, 2011.

\bibitem{PANTELEY_SCHOLL}
L.~Tumash, E.~Panteley, A.~Zakharova, and E.~Scholl, ``Synchronization patterns
  in {Stuart-Landau} networks: a reduced system approach,'' {\em The European
  Physical Journal B -- Condensed Matter and Complex Systems}, vol.~92, no.~5,
  2019.

\bibitem{DYNCON-TAC16}
E.~Panteley and A.~Lor\'{\i}a, ``Synchronization and dynamic consensus of
  heterogeneous networked systems,'' {\em IEEE Trans. Automat. Contr.},
  vol.~62, no.~8, pp.~3758--3773, 2017.

\bibitem{shim_AUT2020}
J.~G. Lee and H.~Shim, ``A tool for analysis and synthesis of heterogeneous
  multi-agent systems under rank-deficient coupling,'' {\em Automatica},
  vol.~117, p.~108952, 2020.

\bibitem{HDR-LENA}
E.~Panteley, ``A stability-theory perspective to synchronisation of
  heterogeneous networks.'' Habilitation \`a diriger des recherches (DrSc
  dissertation). Universit{\'e} Paris Sud, Orsay, France, 2015.
\newblock Available online: https://hal.archives-ouvertes.fr/tel-01262772/.

\bibitem{CP3-CDC16-Mohamed}
M.~Maghenem, E.~Panteley, and A.~Lor{\'\i}a, ``Singular-perturbations-based
  analysis of synchronization in heterogeneous networks: a case-study,'' in
  {\em {Proc. 55th IEEE Conf. Dec. Contr.}}, ({Las Vegas, NV, USA}),
  pp.~2581--2586, 2016.

\bibitem{JP2-STUART-LANDAU}
E.~Panteley, A.~Lor\'{\i}a, and A.~E{l-Ati}, ``Practical dynamic consensus of
  {Stuart-Landau} oscillators over heterogeneous networks,'' {\em Int. J.
  Control}, vol.~93, no.~2, pp.~261--273, 2020.

\bibitem{wieland-sep-AUT-2011}
P.~Wieland, R.~Sepulchre, and F.~Allg\"ower, ``An internal model principle is
  necessary and sufficient for linear output synchronization,'' {\em
  Automatica}, vol.~47, no.~5, pp.~1068--1074, 2011.

\bibitem{depersis2014_TCNS_internal-model}
C.~de~Persis and B.~Jayawardhana, ``On the internal model principle in the
  coordination of nonlinear systems,'' {\em IEEE Trans. Contr. Net. Syst.},
  vol.~1, no.~3, pp.~272--282, 2014.

\bibitem{kim2011_05605658}
H.~Kim, H.~Shim, and J.~H. Seo, ``Output consensus of heterogeneous uncertain
  linear multi-agent systems,'' {\em IEEE Trans. Automat. Contr.}, vol.~56,
  no.~1, pp.~200--206, 2011.

\bibitem{haken77synergetics}
H.~Haken, {\em Synergetics}.
\newblock Berlin, Heidelberg, New York: Springer-Verlag, 1977.

\bibitem{anosov1960limit}
D.~V. Anosov, ``Limit cycles of systems of differential equations with small
  parameters in the highest derivatives,'' in {\em Eleven papers on analysis},
  vol.~92, pp.~299--334, Translation by the American Mathematical Society,
  Russian Academy of Sciences, Branch of Mathematical Sciences, 1963.

\bibitem{kokotovic1976singular}
P.~V. Kokotovi\'c, R.~E. O'Malley, and P.~Sannuti, ``Singular perturbations and
  order reduction in control theory—an overview,'' {\em Automatica}, vol.~12,
  no.~2, pp.~123--132, 1976.

\bibitem{kokotovic1999singular}
P.~V. Kokotovi{\'c}, H.~Khalil, and {J.~O'}Reilly, {\em Singular perturbation
  methods in control: analysis and design}.
\newblock SIAM, 1999.

\bibitem{KHALIL96}
H.~Khalil, {\em Nonlinear systems}.
\newblock New York: {Macmillan Publishing Co., 2nd ed.}, 1996.

\bibitem{chow_kokotovic1985}
J.~Chow and P.~Kokotovi\'c, ``Time scale modeling of sparse dynamic networks,''
  {\em IEEE Trans. Automat. Contr.}, vol.~30, no.~8, pp.~714--722, 1985.

\bibitem{biyik2008area}
E.~B{\i}y{\i}k and M.~Arcak, ``Area aggregation and time-scale modeling for
  sparse nonlinear networks,'' {\em Syst. Contr. Lett.}, vol.~57, no.~2,
  pp.~142--149, 2008.

\bibitem{tognetti2021synchronization}
E.~S. Tognetti, T.~R. Calliero, I.-C. Mor{\u{a}}rescu, and J.~Daafouz,
  ``Synchronization via output feedback for multi-agent singularly perturbed
  systems with guaranteed cost,'' {\em Automatica}, vol.~128, p.~109549, 2021.

\bibitem{rejeb2018control}
J.~B. Rejeb, I.-C. Mor{\u{a}}rescu, and J.~Daafouz, ``Control design with
  guaranteed cost for synchronization in networks of linear singularly
  perturbed systems,'' {\em Automatica}, vol.~91, pp.~89--97, 2018.

\bibitem{montenbruck2015practical}
J.~M. Montenbruck, M.~B{\"u}rger, and F.~Allg{\"o}wer, ``Practical
  synchronization with diffusive couplings,'' {\em Automatica}, vol.~53,
  pp.~235--243, 2015.

\bibitem{hill2015}
T.~Liu, D.~Hill, and J.~Zhao, ``Output synchronization of dynamical networks
  with incrementally-dissipative nodes and switching topology,'' {\em IEEE
  Trans. Circ. Syst. I: Fundam. Th. Appl.}, vol.~62, no.~9, pp.~2312--2323,
  2015.

\bibitem{delellis2011quad}
P.~DeLellis, M.~D. Bernardo, and G.~Russo, ``On quad, lipschitz, and
  contracting vector fields for consensus and synchronization of networks,''
  {\em IEEE Trans. on Circuits and Systems I:}, vol.~58, no.~3, pp.~576--583,
  2011.

\bibitem{teel1999semi}
A.~A.~R.~Teel, J.~Peuteman, and D.~Aeyels, ``Semi-global practical asymptotic
  stability and averaging,'' {\em Syst. Contr. Lett.}, vol.~37, no.~5,
  pp.~329--334, 1999.

\bibitem{HALE_BOOK}
J.~K. Hale, {\em Ordinary Differential equations}, vol.~21 of {\em
  Interscience}.
\newblock {New York}: {John Wiley}, 1969.

\bibitem{FLOQUET}
G.~Floquet, ``Sur les \'quations diff\'rentielles lin\'aires \`a coefficients
  p\'eriodiques,'' {\em {Annales de l'\'Ecole Normale Sup\'erieure}}, no.~12,
  pp.~47--88, 1883.

\bibitem{Perko}
L.~Perko, {\em Differential Equations and Dynamical Systems}.
\newblock Springer, 2000.

\bibitem{BASENER2006841}
W.~Basener, B.~P. Brooks, and D.~Ross, ``The brouwer fixed point theorem
  applied to rumour transmission,'' {\em Appl. Math. Lett.}, vol.~19, no.~8,
  pp.~841--842, 2006.

\bibitem{tikhonov}
A.~N. Tikhonov, ``Systems of differential equations containing small parameters
  in the derivatives,'' {\em Matematicheskii Sbornik}, vol.~73, no.~3,
  pp.~575--586, 1952.

\bibitem{moro1997lidskii}
J.~Moro, J.~V. Burke, and M.~L. Overton, ``On the lidskii--vishik--lyusternik
  perturbation theory for eigenvalues of matrices with arbitrary jordan
  structure,'' {\em SIAM J. Matrix Anal. \& Appl.}, vol.~18, no.~4,
  pp.~793--817, 1997.

\bibitem{Stewart90}
G.~W. Stewart and G.-J. Sun, {\em Matrix Perturbation Theory}.
\newblock Academic Press, 1990.

\bibitem{coddington1955theory}
E.~A. Coddington and N.~Levinson, {\em Theory of ordinary differential
  equations}.
\newblock McGraw Hill, 1955.

\bibitem{Kuznetsov}
Y.~A. Kuznetsov, {\em Elements of Applied Bifurcation Theory}.
\newblock Springer, Applied Mathematical Sciences, Vol. 112, 1998.

\bibitem{Strogatz2}
P.~C. Matthews and S.~H. Strogatz, ``Phase diagram for the collective behavior
  of limit-cycle oscillators,'' {\em Phys. Rev. Lett.}, vol.~65,
  pp.~1701--1704, Oct 1990.

\bibitem{slot}
Q.~C. Pham and J.~J. Slotine, ``Stable concurrent synchronization in dynamic
  system networks,'' {\em {Neural Networks}}, vol.~20, no.~1, pp.~62--77, 2007.

\bibitem{MICNON-2015}
E.~Panteley, A.~Lor\'{\i}a{}, and A.~{El A}ti, ``On the stability and
  robustness of {Stuart-Landau} oscillators,'' {\em IFAC-PapersOnLine},
  vol.~48, no.~11, pp.~645--650, 2015.
\newblock Presented at the 1st IFAC Conference onModelling, Identification
  andControl of Nonlinear Systems, MICNON 2015, {\em (St. Petersburg, Russia)}.

\bibitem{angeli2015characterizations}
D.~Angeli and D.~Efimov, ``Characterizations of input-to-state stability for
  systems with multiple invariant sets,'' {\em IEEE Trans. Automat. Contr.},
  vol.~60, no.~12, pp.~3242--3256, 2015.

\bibitem{GOHLANROD}
I.~Gohberg, P.~Lancaster, and L.~Rodman, {\em Invariant Subspaces of Matrices
  with Applications}.
\newblock Society for Industrial and Applied Mathematics, 2006.

\bibitem{ye2016schubert}
K.~Ye and L.-H. Lim, ``Schubert varieties and distances between subspaces of
  different dimensions,'' {\em SIAM J. Matrix Anal. \& Appl.}, vol.~37, no.~3,
  pp.~1176--1197, 2016.

\bibitem{LectureA}
B.~Birnir, ``Dynamical systems theory.'' University Lecture, 2008.

\bibitem{Stone1948}
M.~H. Stone, ``The generalized weierstrass approximation theorem,'' {\em
  Mathematics Magazine}, vol.~21, no.~4, pp.~167--184, 1948.

\bibitem{hartman2002ordinary}
P.~Hartman, {\em Ordinary differential equations}.
\newblock SIAM, 2002.

\end{thebibliography}
 
\end{document}